\documentclass[a4paper,fleqn]{cas-sc}

\usepackage[numbers]{natbib}

\usepackage{amsmath,amssymb,amsthm}
\usepackage{mathtools}
\usepackage{enumitem}
\usepackage{aliascnt}
\usepackage{cleveref}

\newtheorem{theorem}{Theorem}[section]
\newaliascnt{lemma}{theorem}
\newtheorem{lemma}[lemma]{Lemma}
\aliascntresetthe{lemma}
\newaliascnt{proposition}{theorem}
\newtheorem{proposition}[proposition]{Proposition}
\aliascntresetthe{proposition}
\newaliascnt{corollary}{theorem}
\newtheorem{corollary}[corollary]{Corollary}
\aliascntresetthe{corollary}
\newaliascnt{claim}{theorem}

\aliascntresetthe{claim}
\theoremstyle{definition}
\newaliascnt{definition}{theorem}
\newtheorem{definition}[definition]{Definition}
\aliascntresetthe{definition}
\newaliascnt{example}{theorem}
\newtheorem{example}[example]{Example}
\aliascntresetthe{example}
\theoremstyle{remark}
\newaliascnt{remark}{theorem}
\newtheorem{remark}[remark]{Remark}
\aliascntresetthe{remark}

\crefname{theorem}{Theorem}{Theorems}
\Crefname{theorem}{Theorem}{Theorems}
\crefname{lemma}{Lemma}{Lemmas}
\Crefname{lemma}{Lemma}{Lemmas}
\crefname{proposition}{Proposition}{Propositions}
\Crefname{proposition}{Proposition}{Propositions}
\crefname{corollary}{Corollary}{Corollaries}
\Crefname{corollary}{Corollary}{Corollaries}
\crefname{claim}{Claim}{Claims}
\Crefname{claim}{Claim}{Claims}
\crefname{definition}{Definition}{Definitions}
\Crefname{definition}{Definition}{Definitions}
\crefname{example}{Example}{Examples}
\Crefname{example}{Example}{Examples}
\crefname{remark}{Remark}{Remarks}
\Crefname{remark}{Remark}{Remarks}

\newcommand{\B}{\{0,1\}}
\newcommand{\ZZ}{\mathbb{Z}}
\newcommand{\Zn}{\ZZ/n\ZZ}
\newcommand{\Zquot}[1]{\ZZ/#1\ZZ}
\newcommand{\rot}{\operatorname{rot}}
\newcommand{\gcdop}{\operatorname{\gcd}}
\newcommand{\One}{\mathbf{1}}
\newcommand{\Neck}[1]{\mathcal{N}(#1)}
\newcommand{\PrimNeck}[1]{\mathcal{P}(#1)}
\newcommand{\Ham}{\mathrm{d_H}}
\newcommand{\minDist}{\delta}
\newcommand{\Ktwo}{K_2}
\newcommand{\Dtwo}{\mathcal{D}_2}
\newcommand{\DtwoHT}{\mathcal{D}_2^{\mathrm{HT}}}
\newcommand{\DtwoSP}{\mathcal{D}_2^{\mathrm{SP}}}
\newcommand{\DtwoSUB}{\mathcal{D}_2^{\mathrm{SUB}}}
\newcommand{\DtwoMP}{\mathcal{D}_2^{\mathrm{MP}}}
\newcommand{\tautwo}{\tau_2}

\newcommand{\PrimCt}[1]{\mathsf{P}(#1)}

\begin{document}
\let\WriteBookmarks\relax
\let\printorcid\relax
\ExplSyntaxOn
\cs_if_exist:NT \g_stm_nologo_bool
  { \bool_gset_true:N \g_stm_nologo_bool }
\ExplSyntaxOff

\ExplSyntaxOn
\cs_set:Npn \__first_footerline:
  {
    \group_begin:
    \small
    \sffamily
    \ifnum\theblind>0\relax
    \else
      \__short_authors:
    \fi
    \group_end:
  }
\ExplSyntaxOff

\renewcommand{\floatpagefraction}{1}
\renewcommand{\textfraction}{.001}

\shorttitle{Binary necklaces with minimum nontrivial rotational Hamming distance $2$}
\shortauthors{Makhynko}

\title[mode=title]{Binary necklaces with minimum nontrivial rotational Hamming distance $2$}

\author[1]{Mykola Makhynko}
\cormark[1]
\ead{mykola.makhynko@gmail.com}
\affiliation[1]{organization={Independent Researcher},
            city={Unionville},
            state={Ontario},
            country={Canada}}
\cortext[1]{Corresponding author}

\begin{abstract}
We classify and count binary necklaces $[w]$ for which the representative-independent minimum nontrivial rotational Hamming distance $\minDist(w)$ equals $2$, the first positive $\minDist$ layer after the periodic case.  With a fixed priority convention, the full set of nonzero shifts attaining the value $2$ determines a classification into half-turn (HT), single-pair (SP), and subgroup-pattern (SUB) classes, with the residual multi-pair (MP) class empty.  We give closed formulas for the HT, SP, and SUB counts and an explicit M\"obius--totient divisor-sum formula for $|\mathcal D_2(n)|$.  The larger subgroup-shaped branch is distinct from the SUB priority class and includes the SUB cases together with the boundary cases assigned by priority to HT and SP.  A quotient--background normal form describes this larger branch and records the corresponding boundary counts.  For odd prime lengths $p$, every distance-$2$ necklace is rotation-equivalent to an arithmetic interval on the prime cycle, and $|\mathcal D_2(p)|=(p^2-4p+7)/2$.
\end{abstract}

\begin{keywords}
binary necklaces \sep rotational Hamming distance \sep minimizer sets \sep enumeration \sep M\"obius inversion
\end{keywords}
\maketitle

\section{Introduction}
\label{sec:intro}

A binary necklace of length $n$ is a binary word of length $n$ considered up to cyclic rotation.  For a word $w\in\{0,1\}^n$, let
\[
 d_w(k)=\Ham(w,\rot^k w),\qquad 0\le k<n,
\]
where $(\rot^k w)_i=w_{i-k}$.  For $n\ge2$ define
\[
 \minDist(w)=\min_{1\le k<n} d_w(k).
\]
All rotational Hamming distances of a binary word are even, and $\minDist(w)>0$ is equivalent to primitivity.  In the language of conjugates, this parity boundary was studied by Shallit~\cite{Shallit2009}; the first positive exact-minimum case here is therefore $\minDist(w)=2$.
Throughout, $\Zn$ denotes $\ZZ/n\ZZ$, and indices and shifts are read modulo $n$.

Under the encoding $x_i=(-1)^{w_i}$, define the periodic autocorrelation
\[
 C_x(k):=\sum_{i\in\Zn}x_i x_{i-k}.
\]
Then
\[
 d_w(k)=\frac{n-C_x(k)}{2}.
\]
Thus rotational Hamming distance can also be viewed as an off-peak periodic-autocorrelation parameter familiar from sequence design.  For periodic-autocorrelation terminology and sequence-design background, see, for example, \cite{GolombGong2005,Sarwate1979}; no theorem from these references is used below.  Consequently, $\minDist(w)=2$ means that some nonzero shift $k$ attains $C_x(k)=n-4$, and no nonzero shift has a larger autocorrelation value.  In this sense the layer is complementary to the usual sequence-design problem of constructing sequences with small or controlled off-peak autocorrelation.  We use this interpretation only as background: the proofs below use cyclic word and necklace structure directly.

\subsection{Motivation and contribution}
\label{subsec:motivation-contribution}

A natural refinement of classical necklace enumeration is to enumerate necklaces by their minimum nontrivial rotational Hamming distance.  P\'olya's formula counts all binary necklaces, while the classical primitive-necklace formula separates the periodic boundary: for $n\ge2$, $\minDist(w)=0$ exactly when a nontrivial rotation fixes $w$, and $\minDist(w)>0$ exactly when the necklace is primitive.  Since binary rotational Hamming distances are even, the first positive exact layer in this stratification is $\minDist(w)=2$.

The distance-$2$ layer is therefore a test case for exact-distance enumeration beyond the periodic/primitive boundary.  It is also the first place where prescribed-shift information is not enough.  Counting words for which one prescribed shift has distance $2$ is a projection of the problem; it does not determine how all distance-$2$ shifts occur together on the same necklace, nor does it give a disjoint necklace-level enumeration.  The aim of this paper is to recover that full minimizer-set structure and then count the resulting disjoint classes.

These comparisons also clarify the novelty boundary.  Gabri\'c's work~\cite{Gabric2022} gives related aggregate word-level and Lyndon-level formulas.  Sections S5.2, ``Prescribed-shift counts'', and S5.3, ``Aggregate and pure half-turn comparison with Gabri\'c'', of the supplementary material review the prescribed-shift and aggregate comparisons.  The present paper contributes the necklace-level classification by the complete minimizer set, the proof that the residual multi-pair case is empty at distance $2$, and the classwise HT/SP/SUB formulas whose sum agrees with those aggregate comparisons.

Let $\Neck{n}$ denote the set of binary necklaces of length $n$.
This paper classifies and counts the necklaces
\[
\Dtwo(n)=\{[w]\in\Neck{n}:\minDist(w)=2\}.
\]
The proof is based on the complete set of minimizing shifts
\[
\Ktwo(w)=\{k\in\{1,\ldots,n-1\}:d_w(k)=2\},
\]
rather than on a single shift witnessing distance $2$.  A prescribed-shift count counts words with $d_w(k)=2$ for one prescribed shift $k$.  The exact-minimum problem counts necklaces while determining the full minimizer set and requiring that every nonzero shift have distance at least $2$; in particular, no nonzero shift may have distance $0$.  The main theorem solves this exact-minimum necklace problem.  This distinction is essential: one prescribed shift can have distance $2$ while the full set $\Ktwo(w)$ has different orbitwise shapes.

The class labels used in the theorem are introduced for this enumerative purpose.  They are not independent structural axioms that are assumed to be disjoint in advance.  The raw properties can overlap: a subgroup-shaped minimizer set may contain the half-turn, and a subgroup of quotient order $3$ has exactly one antipodal pair of nonzero shifts.  We therefore assign classes by a fixed priority convention.  The convention is part of the definition and is what turns the structural alternatives into disjoint summands.

The main structural result is that, under this priority convention, the four-way HT/SP/SUB/MP taxonomy collapses in the distance-$2$ case.  Every distance-$2$ necklace belongs to exactly one of three classes:
\[
\Dtwo(n)=\DtwoHT(n)\sqcup\DtwoSP(n)\sqcup\DtwoSUB(n),
\qquad
\DtwoMP(n)=\varnothing.
\]
Here HT means that the half-turn is a minimizer, SP means that the minimizer set is one antipodal pair, and SUB means that the minimizer set is the nonzero part of a cyclic subgroup after the HT and SP priority cases have been removed.  We use subgroup-pattern only for the SUB priority class.  We use subgroup-shaped for the larger normal-form condition that the full minimizer set is the nonzero part of a cyclic subgroup; this condition covers the mixed half-turn and order-$3$ SP boundary cases as well as SUB cases.

For the compact class formulas, let
\[
\PrimCt{\ell}:=\sum_{d\mid \ell}\mu(d)2^{\ell/d},
\]
be the number of primitive binary words of length $\ell$.  In all terms below that are multiplied by indicators, the expression following an indicator is evaluated only when the indicator is $1$; otherwise the entire term is interpreted as zero.  For instance, $\One_{3\mid n}\PrimCt{n/3}$ is zero when $3\nmid n$.  At $n=1$, the empty-class convention stated formally in \Cref{sec:prelim} applies: $\Dtwo(1)=\DtwoHT(1)=\DtwoSP(1)=\DtwoSUB(1)=\DtwoMP(1)=\varnothing$.

For the theorem statement, these class symbols use the following priority convention.  For a necklace $\mathcal N=[w]\in\Dtwo(n)$, let $K=\Ktwo(w)$; this set is independent of the chosen representative.  Regard $K$ as a subset of $\Zn\setminus\{0\}$.  If $K=\{k_1,\ldots,k_r\}$, write $\gcdop(n,K):=\gcdop(n,k_1,\ldots,k_r)$.
Assign the class in priority order.  First, put $\mathcal N\in\DtwoHT(n)$ when $n$ is even and $n/2\in K$.  If this HT case does not occur, put $\mathcal N\in\DtwoSP(n)$ when $K=\{k,n-k\}$ for some $1\le k<n/2$.  If neither HT nor SP occurs, put $\mathcal N\in\DtwoSUB(n)$ when, for $\tau=\gcdop(n,K)$,
\[
K=\tau\Zn\setminus\{0\},
\]
where $\tau\Zn\setminus\{0\}$ denotes the nonzero residue classes in $\Zn$ represented by integer multiples of $\tau$.  The residual class $\DtwoMP(n)$ consists of all remaining cases.  The main enumeration theorem gives the classwise formulas and the aggregate count.  In the aggregate display we expand the primitive-word summands so that the final formula uses only $\mu$, $\varphi$, powers of $2$, indicators, and divisor sums; the compact $\PrimCt{\cdot}$-notation version follows it.

\begin{theorem}[Main enumeration theorem]
\label{thm:main}
For every $n\ge1$,
\[
\Dtwo(n)=\DtwoHT(n)\dot\cup\DtwoSP(n)\dot\cup\DtwoSUB(n),
\qquad
\DtwoMP(n)=\varnothing.
\]
The three class sizes are
\[
\begin{aligned}
|\DtwoHT(n)|
&=\One_{2\mid n}\,2^{\frac n2-1},\\
|\DtwoSP(n)|
&=\One_{3\mid n}\PrimCt{n/3}
  +\sum_{\substack{m\mid n\\ m\ge4}}
    \frac{\varphi(m)}2\,2^{\frac nm-1}(m-3),\\
|\DtwoSUB(n)|
&=\sum_{\substack{\tau\mid n\\ (n/\tau)\ \mathrm{odd}\\ n/\tau\ge5}}
  \PrimCt{\tau}
\end{aligned}.
\]
Consequently, the total count has the following explicit M\"obius--totient divisor-sum form:
\[
|\Dtwo(n)|
=\One_{2\mid n}\,2^{\frac n2-1}
+\One_{3\mid n}\sum_{d\mid (n/3)}\mu(d)\,2^{\frac{n}{3d}}
+\sum_{\substack{m\mid n\\ m\ge4}}
  \frac{\varphi(m)}2\,2^{\frac nm-1}(m-3)
+\sum_{\substack{\tau\mid n\\ (n/\tau)\ \mathrm{odd}\\ n/\tau\ge5}}
  \sum_{d\mid \tau}\mu(d)\,2^{\tau/d}.
\]
Equivalently, in the compact primitive-word notation used in the class formulas,
\[
|\Dtwo(n)|
=\One_{2\mid n}\,2^{\frac n2-1}
+\One_{3\mid n}\PrimCt{n/3}
+\sum_{\substack{m\mid n\\ m\ge4}}
  \frac{\varphi(m)}2\,2^{\frac nm-1}(m-3)
+\sum_{\substack{\tau\mid n\\ (n/\tau)\ \mathrm{odd}\\ n/\tau\ge5}}
  \PrimCt{\tau}.
\]
\end{theorem}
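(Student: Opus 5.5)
The plan is to prove everything from a normal form for a single minimizing shift, and then to determine the whole of $\Ktwo(w)$ from that normal form.

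\textbf{Step 1 (single shift).} Fix $k\in\Ktwo(w)$ and put $g=\gcdop(n,k)$ and $m=n/g$. Consider the signed difference $u_i=w_i-w_{i-k}\in\{-1,0,1\}$. Along each orbit $i\mapsto i+k$ (one coset of $g\Zn$, a cycle of length $m$) these values telescope to $0$. Since exactly two entries are nonzero, they must lie in the same coset $c_0+g\Zn$ and carry opposite signs. It follows that $w$ is constant on every coset other than $c_0+g\Zn$. Read along the cycle $c_0,c_0+k,c_0+2k,\dots$, the restriction of $w$ to $c_0+g\Zn$ is a single arc $1^a0^{m-a}$ with $1\le a\le m-1$. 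This is the quotient--background normal form: a background word $b\in\B^{g}$ (one bit per coset, the bit at $c_0$ being irrelevant) together with one ``arithmetic interval'' of length $a$ on the special coset.

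\textbf{Step 2 (the full minimizer set).} Using this form, I compute $d_w(j)$ for every nonzero $j$.
\begin{itemize}
\item For $j=tk$ inside the subgroup $g\Zn$, the background contributes nothing and the arc contributes $2\min(t,m-t,a,m-a)$, taking $t$ modulo $m$. This is a direct interval-overlap computation on $\Zquot{m}$.
\item For $j\notin g\Zn$, the shift moves the special coset onto a different coset. I will show $d_w(j)\ge 4$ in that case, unless we are in a degenerate situation with $d_w(j)=0$, which is excluded by primitivity.
\end{itemize}
The argument for the second bullet: the special coset is nonconstant and its image coset is constant, or vice versa. Hence each of the two cosets involved contributes at least $\min(a,m-a)\ge1$ mismatches, and for $m\ge2$ these are two disjoint contributions. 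The borderline $a=1$ with $m=2$ has to be checked separately; there the shift gives another half-turn-type pair, which keeps us inside HT.

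This yields $K$ explicitly:
\begin{itemize}
\item if $2\le a\le m-2$ then $K=\{k,-k\}$ (with $K=\{n/2\}$ when $m=2$);
\item if $a\in\{1,m-1\}$ then $K=g\Zn\setminus\{0\}$, provided the shifts outside $g\Zn$ have distance $\ge4$.
\end{itemize}
Consequently $K$ is always either an antipodal pair or subgroup-shaped, and this already gives $\DtwoMP(n)=\varnothing$.

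\textbf{Step 3 (counting by priority).} All words in $\Dtwo(n)$ are primitive, so each necklace has exactly $n$ representatives. I count words in normal form and divide by $n$.
\begin{itemize}
\item \emph{HT} ($m=2$). There are $n/2$ choices of special pair, $2$ orientations and $2^{n/2-1}$ backgrounds. This gives $n\,2^{n/2-1}$ words, hence $2^{n/2-1}$ necklaces. Any subgroup-shaped $K$ with even quotient order also contains $n/2$, so those cases fall here.
\item \emph{SP with $m\ge4$.} There are $\varphi(m)/2$ choices of the unordered pair $\{k,-k\}$ and $m-3$ arc lengths $2\le a\le m-2$. The choices of special coset and rotation along it account for the factor $n$, and the background gives $2^{g-1}$. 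Since $K$ is exactly $\{k,-k\}$, each word is counted once, which yields the summand $\frac{\varphi(m)}2\,2^{n/m-1}(m-3)$.
\item \emph{SP with $m=3$.} Here $a\in\{1,2\}$ and $K=\{k,-k\}=g\Zn\setminus\{0\}$, so these cases are SP by priority. They are in bijection with words whose quotient to length $n/3$ is primitive, giving $\PrimCt{n/3}$.
\item \emph{SUB.} This is $m=n/\tau$ odd with $m\ge5$ and $a\in\{1,m-1\}$. The word is the lift of a quotient word of length $\tau$, with the special coset marked by a single flipped point. Requiring that every shift outside $\tau\Zn$ has distance $\ge4$, and that $w$ is primitive, should be equivalent to primitivity of the length-$\tau$ quotient word, which gives $\PrimCt{\tau}$.
\end{itemize}
Expanding each $\PrimCt{\cdot}$ as a Möbius sum then gives the aggregate formula.

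The main obstacle is the cross-coset analysis in Step 2 in the subgroup-shaped case. Both the condition $K\cap(\Zn\setminus\tau\Zn)=\varnothing$ and the precise bijection with primitive words of length $\tau$ depend on it. Here I would first try to show the following: for $j\notin\tau\Zn$, $d_w(j)=2$ forces the quotient word to be invariant under $j$. That is exactly non-primitivity of the quotient word, and it gives a clean dictionary between the exact-minimum condition and primitivity.
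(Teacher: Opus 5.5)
\paragraph{There is a genuine gap: the single-defect case $a\in\{1,m-1\}$, $m\ge3$, is not handled.} Most of your proposal is sound and follows the paper's route. This includes Step~1, the in-subgroup formula $2\min(t,m-t,a,m-a)$, the cross-coset bound when $2\le a\le m-2$, the half-turn count, and the SP count for $m\ge4$. The problem in the single-defect case is more than a loose end. The bound $d_w(j)\ge4$ you announce for $j\notin g\Zn$ is false there: your own argument only gives $d_w(j)\ge2$, and equality occurs. Take $n=15$, $w=1\,0^{14}$, $k=5$. Then $g=5$, $m=3$, $a=1$, yet $d_w(1)=2$ and $\Ktwo(w)=\Zquot{15}\setminus\{0\}$, which is a SUB necklace.

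This has three consequences:
\begin{itemize}
\item The sentence ``this already gives $\DtwoMP(n)=\varnothing$'' is unsupported. In this branch you only know $\langle k\rangle\setminus\{0\}\subseteq K$ for each such $k\in K$. That does not exclude, for example, $K\cup\{0\}=\langle 3\rangle\cup\langle 5\rangle$ in $\Zquot{15}$.
\item The same example contradicts your $m=3$ bullet as written, which asserts $K=g\Zn\setminus\{0\}$ for all $a\in\{1,2\}$.
\item Your SUB bullet (``should be equivalent to primitivity'') is only a conjecture. So neither the summand $\One_{3\mid n}\PrimCt{n/3}$ nor the summands $\PrimCt{\tau}$ are yet proved.
\end{itemize}

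\paragraph{The missing step is an exact criterion for distance-$2$ shifts in single-defect form, and it is short.} Do not treat the bit on the special coset as irrelevant. Set it to the majority value on that coset; this bit also records singleton versus co-singleton. This gives a quotient word $c\in\B^g$ whose orbit-constant lift $v$ differs from $w$ only at the defect position. Then:
\begin{itemize}
\item $d_v(\ell)=m\,d_c(\ell\bmod g)$, which is either $0$ or at least $2m$.
\item $\Ham(w,v)=1$ gives $|d_w(\ell)-d_v(\ell)|\le 2$.
\item Since $m\ge3$, it follows that $d_w(\ell)=2$ exactly when $\rot^{\ell\bmod g}c=c$. In that case $w$ and $\rot^\ell w$ differ only at the defect and its translate.
\end{itemize}
A cosetwise count gives the same criterion. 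For $j\notin g\Zn$, the special coset $S$ and its translate $S+j$ each contribute $1$ or $m-1$ mismatches. Every other coset whose background bit differs from that of its preimage contributes $m$.

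This is the paper's \Cref{prop:defect-quotient}, and it closes both holes:
\begin{itemize}
\item It gives $\Ktwo(w)=\rho\Zn\setminus\{0\}$, where $\rho\Zquot{g}$ is the period subgroup of $c$. Combined with your genuine-interval and half-turn cases, this shows MP is empty.
\item It shows $\Ktwo(w)=\tau\Zn\setminus\{0\}$ holds exactly when $c$ is primitive. With your $\tau M=n$ choices of defect position and division by $n$, this yields $\PrimCt{\tau}$ necklaces in each subgroup-shaped slice, including the case $M=3$.
\end{itemize}
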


The proof has four steps.
\begin{enumerate}[label=(\roman*),itemsep=0pt,topsep=2pt]
\item \Cref{sec:prelim} fixes notation and defines the distance-$2$ classes.
\item \Cref{sec:structure} proves that a prescribed distance-$2$ shift has exactly one active orbit, and that this orbit is a cyclic interval.
\item \Cref{sec:trichotomy} uses compatibility among minimizing shifts to force the minimizer set into the HT/SP/SUB alternatives.
\item \Cref{sec:defect-background} gives the single-defect normal form from primitive quotient--background words, while \Cref{sec:enum} counts the three classes and derives the prime-length corollary.
\end{enumerate}
The formal proof of \Cref{thm:main} appears at the end of \Cref{sec:enum}.  At that point, \Cref{thm:countHT,thm:countSP,thm:countSUB,cor:countD2} have established the HT, SP, and SUB summands and the aggregate count.

The prime-length corollary has a simple form: if $p$ is an odd prime, then every distance-$2$ necklace is rotation-equivalent to an arithmetic interval on the prime cycle and
\[
|\Dtwo(p)|=\frac{p^2-4p+7}{2}.
\]

The word-level boundary for conjugates was studied by Shallit \cite{Shallit2009}: over a binary alphabet, the Hamming distance between conjugates is even, so distance $2$ is the first positive case.  Gabri\'c's Lemma~7 gives the prescribed-shift word-level comparison.  Theorems~17 and~19 give the aggregate word-level and Lyndon-level formulas, and Theorem~18 gives the pure half-turn boundary comparison \cite[Lem.~7, Thms.~17, 18, and~19]{Gabric2022}.  After primitive-orbit division, the aggregate comparisons agree with the aggregate total above.  The added necklace-level contribution here is the recovery of the full minimizer set $\Ktwo(w)$ and the induced HT/SP/SUB split.
Sections S5.2 and S5.3 of the supplementary material give these external comparisons.

The supplementary material contains optional reformulations and algebraic comparisons with aggregate word-level formulas in the literature.  Section S4, ``Practical classifier and worked examples'', gives examples and classifier details.  Section S5.1, ``Finite check for small $n$'', inside Section S5, ``Finite check and algebraic comparisons'', gives the deterministic enumeration scan for $2\le n\le 20$ together with pseudocode, the aggregate count table, sample records, a field specification, and the aggregate output summary.  These details document the finite check.

The finite check documented in the supplementary material produces the small-length table for $2\le n\le 20$ by an orbit scan that uses the full distance profile over nonzero shifts before assigning any class label and returns zero residual MP counts over its checked range.  Section S5.1 records the aggregate output and the per-necklace fields used by the scan for the $2063$ retained records.

\section{Preliminaries}
\label{sec:prelim}

\subsection{Words, rotations, and necklaces}
A \emph{binary word} of length $n$ is a vector $w=(w_0,\dots,w_{n-1})\in\B^n$.
We view indices modulo $n$.

For $k\in\Zn$, define the cyclic rotation
\[ (\rot^k w)_i := w_{i-k}\qquad (i\in\Zn). \]
Two words are \emph{rotation-equivalent} if one is obtained from the other by a cyclic rotation.
A \emph{binary necklace} of length $n$ is a rotation-equivalence class.
A word is \emph{primitive} or \emph{aperiodic} if no nonzero rotation fixes it, equivalently if its rotation orbit has size $n$.
Throughout, we count necklaces up to rotation only; we do \emph{not} identify a word with its reverse or with its binary complement.

\subsection{Rotational Hamming distance}
The Hamming distance between two length-$n$ words is
\[ \Ham(u,v) := |\{i\in\Zn : u_i\neq v_i\}|. \]
For a word $w$ and shift $k\in\{0,\dots,n-1\}$ define the \emph{rotational Hamming distance}
\begin{equation}
\label{eq:dwdef}
d_w(k) := \Ham\bigl(w,\rot^k w\bigr).
\end{equation}
We always have $d_w(0)=0$ and $d_w(k)=d_w(n-k)$.
Together with the parity argument in \Cref{lem:dw-even}, this symmetry implies that $\minDist(w)$ can only take even values in the binary case.
For $n\ge2$, define the minimum nontrivial rotational Hamming distance
\begin{equation}
\label{eq:mindist}
\minDist(w) := \min_{1\le k\le n-1} d_w(k).
\end{equation}
For $n=1$ there is no nontrivial shift, so $\minDist$ is not used; the distance-$2$ count convention is recorded below in the definition of $\Dtwo(1)$.
When $n\ge2$ and $\minDist(w)=0$, the word is periodic.
In particular, for $n\ge2$ the constant words $0^n$ and $1^n$ satisfy $\minDist(w)=0$ since every nontrivial rotation fixes them.

\begin{lemma}[Parity of rotational Hamming distances]
\label{lem:dw-even}
For every binary word $w\in\B^n$ and every $k\in\{0,\dots,n-1\}$, the value $d_w(k)$ is even.
In particular, when $n\ge2$, $\minDist(w)$ is always an even integer.
\end{lemma}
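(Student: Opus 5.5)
The plan is to prove the parity claim by a counting argument on the number of ones. We compare $w$ with $\rot^k w$ coordinatewise. Fix $k$ and write $u=\rot^k w$. The map $i\mapsto i-k$ is a bijection of $\Zn$, so $u$ is a permutation of the entries of $w$. In particular, $u$ and $w$ have the same weight $|w|:=|\{i:w_i=1\}|$.

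Next we partition the positions where the two words differ:
\[
A=\{i: w_i=1,\ u_i=0\},\qquad B=\{i: w_i=0,\ u_i=1\}.
\]
This gives $d_w(k)=|A|+|B|$. Let $c=|\{i: w_i=u_i=1\}|$. Then
\[
|w|=c+|A|,\qquad |u|=c+|B|.
\]
Since $|w|=|u|$, we get $|A|=|B|$, and hence $d_w(k)=2|A|$ is even.

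An equivalent route is to reduce modulo $2$: $d_w(k)\equiv\sum_i (w_i+w_{i-k})=2\sum_i w_i\equiv 0 \pmod 2$.

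For the second sentence, assume $n\ge2$. Then $\minDist(w)$ is defined in \eqref{eq:mindist} as a minimum over the nonempty set $\{1,\dots,n-1\}$ of values $d_w(k)$. Each of these values is even by the first part, so the minimum, being one of them, is also even.

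There is no real obstacle here. The only point needing care is that rotation preserves the weight; this holds because indexing modulo $n$ makes $i\mapsto i-k$ a bijection of $\Zn$.
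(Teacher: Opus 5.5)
Your proof is correct, but it takes a different route from the paper. The paper decomposes $\Zn$ into the $\gcdop(n,k)$ cycles of $i\mapsto i+k$. Along each cycle, the mismatches between $w$ and $\rot^k w$ are exactly the value changes of a cyclic binary sequence, and there must be an even number of these; summing over cycles gives evenness.

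You use only that $\rot^k$ permutes coordinates and therefore preserves weight. This forces the $1\to0$ and $0\to1$ mismatches to be equinumerous, so $d_w(k)=2|A|$.

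Your argument is more elementary and more general. It shows that $\Ham(w,w\circ\sigma)$ is even for any coordinate permutation $\sigma$. It also avoids the informal step that a cyclic sequence changes value an even number of times, which itself needs essentially your weight argument to justify. And it gives slightly sharper information: the mismatches split evenly by type.

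The paper's route instead builds in the orbit structure it reuses immediately afterward. The per-orbit evenness behind \eqref{eq:transition-count} and \Cref{lem:active} is read off directly from the cycle picture. Nothing is lost with your version, though: applying your argument to each orbit word (a cyclic word compared with its one-step shift) recovers the same per-orbit parity.
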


\begin{proof}
Fix $k$ and let $g=\gcdop(n,k)$.
The permutation $i\mapsto i+k$ decomposes $\Zn$ into $g$ disjoint cycles, each of length $n/g$.
Along one cycle $(i_0,i_0+k,i_0+2k,\dots)$, the contribution to $d_w(k)$ equals the number of indices $t$ for which
$w_{i_t}\neq w_{i_{t+1}}$ (indices taken modulo the cycle length).
A cyclic binary sequence changes value an even number of times around the cycle, so each cycle contributes an even number.
Summing over all cycles, $d_w(k)$ is even.
\end{proof}

\begin{lemma}[Positive minimum and primitive-orbit division]
\label{lem:positive-minimum-primitive}
Let $n\ge2$ and let $w\in\B^n$.
Then $\minDist(w)>0$ if and only if no nonzero rotation fixes $w$; equivalently, $w$ is primitive and its rotation orbit has exactly $n$ words.
Consequently, if $X\subseteq\B^n$ is rotation-invariant and every $w\in X$ satisfies $\minDist(w)>0$, then the number of necklaces represented by $X$ is $|X|/n$.
In particular, primitive-orbit division by $n$ applies to the distance-$2$ word family considered in this paper, $\{w\in\B^n:\minDist(w)=2\}$.
Moreover, any word with $d_w(k)=2$ for some nonzero shift $k$ is primitive.
\end{lemma}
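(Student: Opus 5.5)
The plan is to prove the equivalence directly from the definition of $d_w$, then get the counting statement from orbit--stabilizer, and finally read off the two remaining consequences.

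\emph{Step 1: the equivalence.} For a nonzero shift $k$, we have $d_w(k)=0$ exactly when $\Ham(w,\rot^k w)=0$, that is, when $\rot^k w=w$. Hence $\minDist(w)=0$ holds precisely when some $k\in\{1,\dots,n-1\}$ satisfies $\rot^k w=w$. Taking contrapositives, $\minDist(w)>0$ holds if and only if no nonzero rotation fixes $w$. This is exactly the definition of primitivity given in the preliminaries.

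\emph{Step 2: orbit size.} The cyclic group $\Zn$ acts on $\B^n$ by $k\mapsto\rot^k$. By orbit--stabilizer, the orbit of $w$ has size $n/|\mathrm{Stab}(w)|$. If $w$ is primitive, the stabilizer is trivial, so the orbit has exactly $n$ words. Conversely, if the orbit has $n$ words, the stabilizer is trivial, so $w$ is primitive.

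\emph{Step 3: the counting consequence.} Let $X\subseteq\B^n$ be rotation-invariant. Then $X$ is a disjoint union of rotation orbits, and these orbits are exactly the necklaces represented by $X$. If every $w\in X$ has $\minDist(w)>0$, Step 2 shows that every orbit has size $n$. Therefore $|X|$ equals $n$ times the number of necklaces represented by $X$.

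To apply this to $X=\{w:\minDist(w)=2\}$, we must check that $X$ is rotation-invariant. Rotation invariance follows from the identity
\[
d_{\rot^j w}(k)=\Ham(\rot^j w,\rot^{j+k}w)=\Ham(w,\rot^k w)=d_w(k).
\]
Thus $\minDist$ is constant on orbits. Moreover $\minDist(w)=2>0$ for every $w\in X$, so Step 3 applies.

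\emph{Step 4: the final claim.} Suppose $d_w(k)=2$ for some nonzero $k$. We need $d_w(j)\neq0$ for every nonzero $j$. Suppose instead that $\rot^j w=w$ for some nonzero $j$. Then the stabilizer $H$ of $w$ is a nontrivial subgroup of $\Zn$, so $H=g\Zn$ for some proper divisor $g$ of $n$. This means $w$ is $g$-periodic: $w_i=w_{i+g}$ for all $i$. Then $\rot^k w$ is also $g$-periodic, since rotations commute. Consequently, the set of positions where $w$ and $\rot^k w$ differ is a union of cosets of $g\Zn$. Each such coset has size $n/g\ge2$. A nonempty union of cosets then has size a positive multiple of $n/g$. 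Hence $d_w(k)$ is either $0$ or at least $n/g$, and it can equal $2$ only when $n/g=2$.

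\emph{The main obstacle.} The case $n/g=2$, i.e.\ $n=2g$, is the delicate point of this last step. In that case the differing set consists of exactly one coset $\{i,i+n/2\}$. This does not contradict anything by itself. For example, $w=0101$ has $n=4$ and $g=2$, and $d_w(1)=4$, but other examples give distance $2$: take $w=01$, with $n=2$, $g=1$ and $d_w(1)=2$, while $w$ is primitive. So I would first test whether a periodic word can have distance exactly $2$ at some shift. Take $w=0011$ repeated twice, which has $n=8$ and $g=4$. Then $d_w(1)=4$, so this is not a counterexample. Next take $w=(01)^2$ with $g=2$ and $n/g=2$. Here $w$ and $\rot^1 w$ differ in all four positions, so $d_w(1)=4$.

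In general, when $w$ is periodic with period $g$, one has $d_w(k)=(n/g)\,d_u(k \bmod g)$, where $u$ is the primitive root of length $g$. By Lemma~\ref{lem:dw-even}, $d_u$ is even. Therefore a nonzero value of $d_w(k)$ is at least $2\cdot(n/g)\ge4$. This parity step is the crux: the multiplicity factor $n/g\ge2$ combined with the evenness of $d_u$ rules out the value $2$. Hence any word with $d_w(k)=2$ for some nonzero $k$ must be primitive.
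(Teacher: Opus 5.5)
Your proof is correct and follows essentially the same route as the paper. The shared steps are: $d_w(k)=0\iff\rot^k w=w$ for the equivalence, orbit--stabilizer for the count $|X|/n$, and, for the final claim, the block factorization $d_w(k)=(n/g)\,d_u(k\bmod g)$ combined with \Cref{lem:dw-even}, which gives $d_w(k)=0$ or $d_w(k)\ge 4$. The last of these is exactly the paper's argument, with its $t=n/p$ in place of your $n/g$.

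The coset detour and the test examples in Step~4 are unnecessary, and the $w=01$ example is primitive, so it is not a periodic instance at all. Your final paragraph alone suffices. Your explicit check that $\{w:\minDist(w)=2\}$ is rotation-invariant is a small point the paper leaves implicit.
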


\begin{proof}
For $1\le s<n$, the equality $d_w(s)=0$ is exactly the equality $\rot^s w=w$.
Thus $\minDist(w)>0$ is equivalent to a trivial stabilizer under the cyclic rotation action, and the orbit-stabilizer theorem gives orbit size $n$ exactly in this case.
A nontrivial stabilizing rotation is the same as a proper period, so trivial stabilizer is equivalent to primitivity.
If $X$ is rotation-invariant and every $w\in X$ has $\minDist(w)>0$, then $X$ is a disjoint union of rotation orbits of size $n$, so its necklace quotient has cardinality $|X|/n$.
The distance-$2$ assertion is the special case $X=\{w\in\B^n:\minDist(w)=2\}$.
For the final assertion, suppose that $w$ has a proper rotational period $p<n$, so $w$ is obtained by repeating a length-$p$ block $t=n/p>1$ times.
For any shift $k$, the distance $d_w(k)$ is $t$ times the corresponding rotational Hamming distance of the period-$p$ block at the induced shift modulo $p$.
By \Cref{lem:dw-even} applied to that block, this induced distance is either $0$ or at least $2$; hence $d_w(k)$ is either $0$ or at least $2t\ge4$ and cannot equal $2$.
\end{proof}

\begin{lemma}[Cycle decomposition for a prescribed shift]
\label{lem:shift-cycle-decomp}
Fix $w\in\B^n$ and $k\in\{0,\dots,n-1\}$, and write $g:=\gcdop(n,k)$ and $m:=n/g$.
For each residue $r\in\{0,\dots,g-1\}$ define a length-$m$ word $u^{(r)}\in\B^m$ by
\[
u^{(r)}_t := w_{r+tk}\qquad (t\in\Zquot{m}),
\]
where indices on the right are taken in $\Zn$.
Then
\[
d_w(k)=\sum_{r=0}^{g-1} \Ham\bigl(u^{(r)},\rot u^{(r)}\bigr)
      =\sum_{r=0}^{g-1}\bigl|\{t\in\Zquot{m}: u^{(r)}_t\neq u^{(r)}_{t-1}\}\bigr|.
\]
In other words, $d_w(k)$ is the total number of value-changes around the $g$ disjoint $k$-orbits of $\Zn$.
\end{lemma}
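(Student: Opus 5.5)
The plan is to unfold the definition of $d_w(k)$ directly along the orbits of the translation $i\mapsto i+k$ on $\Zn$. First, the orbits of this translation are the cosets $r+g\ZZ/n\ZZ$ for $r\in\{0,\dots,g-1\}$. The subgroup generated by $k$ in $\Zn$ equals the subgroup generated by $g=\gcdop(n,k)$, which has order $m=n/g$. Hence the map $t\mapsto r+tk$ from $\Zquot{m}$ to the coset $r+\langle g\rangle$ is a well-defined bijection. It is well defined because $mk=n(k/g)\equiv0$. It is injective because $tk\equiv0 \pmod n$ forces $m\mid t(k/g)$, and $\gcdop(m,k/g)=1$. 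Consequently the $g$ cosets partition $\Zn$, and each word $u^{(r)}$ is simply $w$ read along one coset in the order given by repeated addition of $k$.

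Next, split the Hamming sum according to this partition:
\[
d_w(k)=\sum_{i\in\Zn}\One_{w_i\neq w_{i-k}}=\sum_{r=0}^{g-1}\sum_{t\in\Zquot m}\One_{w_{r+tk}\neq w_{r+(t-1)k}}.
\]
Here we used $(\rot^k w)_i=w_{i-k}$ and reindexed $i=r+tk$ using the bijection from the first step. The inner summand equals $\One_{u^{(r)}_t\neq u^{(r)}_{t-1}}$. Also $(\rot u^{(r)})_t=u^{(r)}_{t-1}$ by the same rotation convention on $\Zquot m$. So the inner sum is both $\Ham(u^{(r)},\rot u^{(r)})$ and the number of value-changes around the $r$-th cycle. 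This gives both displayed equalities at once.

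The only point requiring care is the well-definedness and bijectivity of the reindexing in the first step. In particular, the case $k=0$ must be handled: there $g=n$ and $m=1$, each $u^{(r)}$ has length $1$, and both sides are $0$. Once the reindexing is justified, everything else is bookkeeping.
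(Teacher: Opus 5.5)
Your proof is correct and follows the paper's route: partition $\Zn$ into the $g$ cycles of $i\mapsto i+k$, reindex by $i=r+tk$, note $(\rot^k w)_i=w_{r+(t-1)k}=u^{(r)}_{t-1}$, and sum over $t$ and then over $r$. Your explicit check that $t\mapsto r+tk$ is a well-defined bijection from $\Zquot{m}$ onto the coset $r+g\Zn$ (using $\gcdop(m,k/g)=1$) fills in a step the paper only asserts.
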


\begin{proof}
The permutation $i\mapsto i+k$ partitions $\Zn$ into $g$ cycles, each of length $m$.
The cycle corresponding to $r$ is $(r,r+k,r+2k,\dots,r+(m-1)k)$.
For an index $i=r+tk$ we have $(\rot^k w)_i=w_{i-k}=w_{r+(t-1)k}$, so the indicator of the event
$w_i\neq (\rot^k w)_i$ is exactly the indicator of $u^{(r)}_t\neq u^{(r)}_{t-1}$.
Summing over $t\in\Zquot{m}$ gives the second expression for the contribution of the $r$th cycle, and summing over all cycles yields the claim.
\end{proof}

The present paper studies the next case, $\minDist(w)=2$.

For every word $w\in\B^n$ with $n\ge2$, define the distance-$2$ shift set
\begin{equation}
\label{eq:K2def}
\Ktwo(w) := \{k\in\{1,\dots,n-1\} : d_w(k)=2\}.
\end{equation}
When $\minDist(w)=2$, this is exactly the set of nontrivial minimizing shifts.
The set $\Ktwo(w)$ depends only on the necklace, because $d_w(k)$ is rotation-invariant.

\begin{remark}[Rotation-invariant word notation on necklace classes]
\label{rem:necklace-word-invariants}
If $\mathcal N=[w]$ is a binary necklace of length $n\ge2$, then the rotation-invariance of $d_w(k)$ lets us write
\[
\minDist(\mathcal N):=\minDist(w),
\qquad
\Ktwo(\mathcal N):=\Ktwo(w),
\]
for any representative $w\in\mathcal N$.
Accordingly, every quantity used below that is built only from the rotation-invariant data $d_w(k)$ or $\Ktwo(w)$ is used on necklace classes by the same representative-independent convention.
\end{remark}

\begin{remark}[Set notation for minimizing shifts]
When convenient, we identify a shift $k\in\{1,\dots,n-1\}$ with its residue class in $\Zn$.
Accordingly, for a nonempty set $K\subseteq\{1,\dots,n-1\}$ we freely regard $K$ as a subset of $\Zn\setminus\{0\}$ and write
\[
\gcdop(n,K):=\gcdop(n,k_1,\dots,k_r),
\]
when $K=\{k_1,\dots,k_r\}$.
With this convention, we use the displayed notation
\[
K=\tau\Zn\setminus\{0\}.
\]
The notation means that $K$ is exactly the set of nonzero residue classes in $\Zn$ represented by integer multiples of $\tau$.
\end{remark}

When $\Ktwo(w)$ is nonempty for a word of length $n$, write
\[
\tautwo(w):=\gcdop(n,\Ktwo(w)).
\]
By the rotation-invariant convention of \Cref{rem:necklace-word-invariants}, we also write
\[
\tautwo(\mathcal N):=\gcdop(n,\Ktwo(\mathcal N)),
\]
for the corresponding necklace class.  In the refined HT and SUB slices introduced below, this is the subgroup divisor used in the notation $\DtwoHT(n;\tau)$ and $\DtwoSUB(n;\tau)$.

\subsection{Distance-\texorpdfstring{$2$}{2} class and slice notation}
\label{subsec:d2-branches}
The following distance-$2$ class names record the shape of the full set $\Ktwo(w)$ of minimizing shifts, not merely the presence of a prescribed distance-$2$ witness.
We reserve \emph{class} for the priority labels HT, SP, SUB, and MP.  Refinements such as $\DtwoHT(n;\tau)$, $\DtwoSP(n;m)$, and $\DtwoSUB(n;\tau)$ are called \emph{slices}, while normal-form branches refer only to proof alternatives.

The classes are assigned by priority: half-turn first, then single-pair, then subgroup-pattern, with a residual multi-pair class for the remaining cases.  This priority convention is both an enumeration device and a definition: without it, the natural structural properties would overlap.  A subgroup-shaped minimizer set can contain the half-turn, and the order-$3$ SP boundary case is a subgroup-shaped set that consists of one antipodal pair; these boundary cases are assigned to HT or SP before SUB is considered.

We use subgroup-pattern only for the priority class $\DtwoSUB$.  We use subgroup-shaped for the larger normal-form condition that the full minimizer set has the form $\tau\Zn\setminus\{0\}$ for some divisor $\tau\mid n$.  This condition covers mixed half-turn cases, order-$3$ SP boundary cases, and SUB cases.  \Cref{thm:trichotomy} proves that the residual class is empty at distance $2$.

\begin{definition}[Distance-$2$ class taxonomy]
\label{def:d2-taxonomy}
Let $n\ge2$ and let $\mathcal N$ be a necklace of length $n$ with $\minDist(\mathcal N)=2$.  Set $K:=\Ktwo(\mathcal N)$.
The class labels are assigned as follows.
\begin{enumerate}[label=\textup{(\roman*)},leftmargin=2.5em]
\item \textbf{HT:} $n$ is even and $n/2\in K$.
\item \textbf{SP:} the HT case does not occur and $K=\{k,n-k\}$ for some $1\le k<n/2$.
\item \textbf{SUB:} neither HT nor SP occurs and, with $\tau:=\gcdop(n,K)$,
\[
K=\tau\Zn\setminus\{0\}.
\]
\item \textbf{MP:} all remaining cases; equivalently, after the half-turn and single-pair cases have been removed, $K$ contains at least two antipodal pairs but is not of the subgroup-shaped form $\tau\Zn\setminus\{0\}$.
\end{enumerate}
\end{definition}

For concrete orientation, the following small representatives illustrate the class labels; these rows are orientation examples only, not additional hypotheses or proof inputs.  The set $\Ktwo(w)$ is listed as a subset of $\Zn\setminus\{0\}$ for the displayed length.
\begin{center}
\small
\begin{tabular}{ccccc}
\hline
Class & Length $n$ & Representative $w$ & $\Ktwo(w)$ & Classifying feature \\
\hline
HT & $8$ & \texttt{00010011} & $\{4\}$ & half-turn $4=n/2$ \\
SP & $5$ & \texttt{11000} & $\{1,4\}$ & one antipodal pair \\
SP & $3$ & \texttt{100} & $\{1,2\}=\Zquot{3}\setminus\{0\}$ & order-$3$ SP boundary case \\
SUB & $5$ & \texttt{10000} & $\{1,2,3,4\}$ & $\tau=1$, $\Ktwo(w)=\Zn\setminus\{0\}$ \\
\hline
\end{tabular}
\end{center}
The priority convention also has mixed boundary cases.  The length-$3$ representative \texttt{100} has
$\Ktwo(w)=\{1,2\}=\Zquot{3}\setminus\{0\}$: it is subgroup-shaped with quotient order $M=3$, but the HT condition is not satisfied and $|\Ktwo(w)|=2$, so the necklace is assigned to SP before SUB is considered.  In the supplementary worked example at length $12$, the representative \texttt{101001001001} has
$\Ktwo(w)=\{3,6,9\}$; because $6=n/2$ lies in this subgroup-shaped minimizer set, the necklace is assigned to HT before SUB is considered.  Section S4 of the supplementary material, ``Practical classifier and worked examples'', gives the full classifier trace, additional context for these representatives, finite-check rows for the boundary cases, and the worked quotient--background construction using these class labels.

\subsection{Classical necklace counts (for comparison)}
The total number of binary necklaces of length $n$ is given by P\'olya enumeration:
\begin{equation}
\label{eq:necklaces}
|\Neck{n}| = \frac{1}{n}\sum_{d\mid n} \varphi(d)\,2^{n/d}.
\end{equation}
The number of \emph{primitive} (aperiodic) binary necklaces of length $n$ is
\begin{equation}
\label{eq:prim-necklaces}
\PrimNeck{n} = \frac{1}{n}\sum_{d\mid n} \mu(d)\,2^{n/d}.
\end{equation}
We will also use the classical count of primitive binary \emph{words} (not modulo rotation):
\begin{equation}
\label{eq:prim-words}
\#\{\text{primitive words of length }\ell\} = \sum_{d\mid \ell} \mu(d)\,2^{\ell/d}.
\end{equation}
Here a binary word of length $\ell$ has \emph{minimal period} $p\mid \ell$ if $p$ is the least positive divisor of $\ell$ for which the word is obtained by repeating a word of length $p$ exactly $\ell/p$ times; equivalently, a word of length $\ell$ is primitive exactly when its minimal period is $\ell$, matching the rotation-orbit convention above.

\begin{remark}
Equations \eqref{eq:necklaces}--\eqref{eq:prim-necklaces} are classical: P\'olya supplies the cycle-index enumeration viewpoint, and Lothaire supplies standard word and necklace terminology, including primitive-word background \cite{Polya,Lothaire}.
We include these formulas to highlight similarities with the enumeration formulas below: all counts ultimately reduce to divisor sums with $\varphi$ or $\mu$.
\end{remark}

\begin{remark}[Word, necklace, and Lyndon comparisons]
\label{rem:comparison-bridge}
The Gabri\'c comparison formulas used here are located in Sections S5.2 and S5.3 of the supplementary material. They use word-level counts together with the usual passage from primitive words to rotation orbits: necklaces are the orbits, while Lyndon words are standard representatives of primitive orbits. For the distance-$2$ word families considered here, this passage is formalized in \Cref{lem:positive-minimum-primitive}. These formulas provide numerical comparisons and are not part of the HT/SP/SUB proofs.
\end{remark}

\subsection{The distance-\texorpdfstring{$2$}{2} minimizer-set classes}
We now define the distance-$2$ classes used in the enumeration.
They are the HT, SP, and SUB parts of the distance-$2$ class taxonomy in \Cref{def:d2-taxonomy}; the residual multi-pair part is retained as a named residual class and is proved empty in \Cref{thm:trichotomy}.
All named class definitions in this subsection inherit the priority convention of \Cref{def:d2-taxonomy}.  HT membership is tested first.  SP is tested only after HT has failed, and SUB is tested only after both HT and SP have failed.  The term subgroup-pattern is reserved for the priority class $\DtwoSUB$, whereas subgroup-shaped denotes the larger normal-form condition before this priority assignment.

\begin{definition}[Distance-$2$ necklaces]
For $n\ge 2$, let
\[
\Dtwo(n) := \{\mathcal N\in\Neck{n}: \minDist(\mathcal N)=2\}.
\]
For $n=1$, set
\[
\Dtwo(1)=\DtwoHT(1)=\DtwoSP(1)=\DtwoSUB(1)=\DtwoMP(1):=\varnothing.
\]
\end{definition}

\begin{definition}[Half-turn class ($\DtwoHT$)]
\label{def:DtwoHT}
Assume $n$ is even.
For a necklace class $\mathcal N\in\Dtwo(n)$, write $\mathcal N\in\DtwoHT(n)$ if the half-turn is a minimizer:
\[ n/2\in\Ktwo(\mathcal N). \]
If $n$ is odd, set $\DtwoHT(n)=\emptyset$.
\end{definition}

\begin{definition}[Refined HT slices $\DtwoHT(n;\tau)$]
Assume $n$ is even and let $\tau\mid (n/2)$.
For a necklace class $\mathcal N\in\DtwoHT(n)$, write $\mathcal N\in\DtwoHT(n;\tau)$ if
\[
\Ktwo(\mathcal N)=\tau\Zn\setminus\{0\}.
\]
Note that $\tau=n/2$ corresponds to the ``pure'' half-turn case $\Ktwo(\mathcal N)=\{n/2\}$.
\end{definition}

\begin{definition}[Single-pair class ($\DtwoSP$)]
\label{def:DtwoSP}
For a necklace class $\mathcal N\in\Dtwo(n)$, write $\mathcal N\in\DtwoSP(n)$ if
\[ \Ktwo(\mathcal N)=\{k,n-k\} \quad\text{for some } 1\le k < n/2. \]
Equivalently, $|\Ktwo(\mathcal N)|=2$ and (when $n$ is even) $n/2\notin\Ktwo(\mathcal N)$.
\end{definition}

\begin{definition}[Subgroup-pattern class ($\DtwoSUB$)]
\label{def:DtwoSUB}
For a necklace class $\mathcal N\in\Dtwo(n)$, write $\mathcal N\in\DtwoSUB(n)$ if
\[ \Ktwo(\mathcal N)=\tau\Zn\setminus\{0\} \quad\text{for some divisor }\tau\mid n, \]
with the additional constraints
\begin{enumerate}[label=(\alph*),leftmargin=2.5em]
\item $|\Ktwo(\mathcal N)|>2$ (more than one antipodal pair), and
\item if $n$ is even, then $n/2\notin\Ktwo(\mathcal N)$ (we treat the half-turn separately in class $\DtwoHT$; see \Cref{def:DtwoHT}).
\end{enumerate}
The divisor $\tau$ is called the \emph{strict period} of $\mathcal N$.
\end{definition}

\begin{definition}[Residual multi-pair class ($\DtwoMP$)]
\label{def:DtwoMP}
For a necklace class $\mathcal N\in\Dtwo(n)$, write $\mathcal N\in\DtwoMP(n)$ if $\mathcal N$ belongs to none of $\DtwoHT(n)$, $\DtwoSP(n)$, and $\DtwoSUB(n)$.
The distance-$2$ collapse theorem \Cref{thm:trichotomy} proves that $\DtwoMP(n)$ is empty for every $n$.
\end{definition}

\begin{remark}[Strict period vs.~rotational period]
In this manuscript, \emph{strict period} is reserved for the SUB refinement in \Cref{def:DtwoSUB}.  Thus, when $\mathcal N\in\DtwoSUB(n)$ and $\Ktwo(\mathcal N)=\tau\Zn\setminus\{0\}$, the divisor $\tau$ is the strict period of $\mathcal N$.  In subgroup-shaped HT and order-$3$ SP boundary cases outside SUB, the same divisor is only the subgroup divisor.  It should not be confused with a rotational period of a representative word $w$ (which would mean $\rot^k w=w$ for some $k\neq 0$ and hence $\minDist(w)=0$).
\end{remark}

\begin{definition}[SP by order]
\label{def:DtwoSPByOrder}
For $k\in\Zn$, write $\operatorname{ord}_{\Zn}(k)$ for the least positive integer $m$ such that $mk=0$ in $\Zn$; equivalently, $m=|\langle k\rangle|$.
For each integer $m\ge 3$, define $\DtwoSP(n;m)$ as follows.
If $m$ divides $n$, let $\DtwoSP(n;m)$ be the set of necklace classes $\mathcal N\in\DtwoSP(n)$ for which there exists a shift $k$ with
$\Ktwo(\mathcal N)=\{k,n-k\}$ and $\operatorname{ord}_{\Zn}(k)=m$.
If $m$ does not divide $n$, set $\DtwoSP(n;m)=\varnothing$.
\end{definition}

\begin{definition}[SUB by strict period]
For $\tau\mid n$, let $\DtwoSUB(n;\tau)$ be the set of necklace classes $\mathcal N\in\DtwoSUB(n)$ for which
$\Ktwo(\mathcal N)=\tau\Zn\setminus\{0\}$ (equivalently, $\tau$ is the strict period of $\mathcal N$).
\end{definition}

\section{Prescribed-shift orbit geometry at distance \texorpdfstring{$2$}{2}}
\label{sec:structure}

Fix a nonzero shift $k$ and first study the prescribed-shift slice $d_w(k)=2$ before asking whether $k$ is globally minimizing.
When $w$ is compared with $\rot^k w$, each index is paired with the previous point on the directed cycles of $i\mapsto i+k$ on $\Zn$.
Along each cycle, rotation by $k$ is a one-step cyclic shift, so the contribution to $d_w(k)$ is the number of cyclic transitions in the corresponding orbit word.
At distance $2$, this forces exactly one active orbit, and on that orbit the $1$-set is a cyclic interval.

We apply \Cref{lem:shift-cycle-decomp} to the prescribed shift $k$: the cyclic order on each $k$-orbit contributes its boundary count to $d_w(k)$.  We use the following notation throughout this section.
\subsection{Orbit notation and boundary counts}
Fix $n$ and a nonzero shift $k$.  Let $g:=\gcdop(n,k)$ and $m:=n/g$.  The cyclic subgroup $\langle k\rangle\le \Zn$ partitions $\Zn$ into the $g$ orbits
\[
\mathcal{O}_r := \{r + tk : t=0,1,\dots,m-1\}, \qquad r=0,1,\dots,g-1.
\]
For each orbit $\mathcal{O}_r$, define the \emph{orbit word} $a^{(r)}\in\B^m$ by $a^{(r)}_t := w_{r+tk}$.  For any cyclic binary word $x\in\B^m$, write
\[
\partial x:=\{t\in\Zquot{m}: x_t\neq x_{t+1}\},
\]
for its \emph{boundary set}.  Since $(\rot^k w)|_{\mathcal O_r}$ corresponds to the one-step cyclic shift of $a^{(r)}$, \Cref{lem:shift-cycle-decomp} gives
\begin{equation}
\label{eq:transition-count}
d_w(k)=\sum_{r=0}^{g-1}|\partial a^{(r)}|.
\end{equation}
Thus the contribution of a $k$-orbit is exactly the boundary count of its orbit word, and every orbit contribution is even.  Consequently, if $d_w(k)=2$, the decomposition in \Cref{lem:shift-cycle-decomp} realizes the only possible partition of $2$ into nonnegative even orbit contributions,
\[
2=2+0+\cdots+0,
\]
so there is a unique \emph{active} $k$-orbit (see \Cref{lem:active}).

\begin{example}
\label{ex:orbit-decomp}
Take $n=12$ and $k=8$.
Then $g=\gcdop(12,8)=4$ and $m=3$, and the $k$-orbits are the residue classes modulo~$4$:
\[
\mathcal{O}_0=\{0,8,4\},\quad
\mathcal{O}_1=\{1,9,5\},\quad
\mathcal{O}_2=\{2,10,6\},\quad
\mathcal{O}_3=\{3,11,7\}.
\]
Suppose $w$ is constant on $\mathcal{O}_1,\mathcal{O}_2,\mathcal{O}_3$ and, in the orbit order $(0,8,4)$ on $\mathcal{O}_0$, we have
\[
(w_0,w_8,w_4)=(1,1,0).
\]
Thus the orbit word on $\mathcal{O}_0$ is $a^{(0)}=(1,1,0)$, whose boundary set is
\[
\partial a^{(0)}=\{1,2\}.
\]
Its boundary count is therefore $|\partial a^{(0)}|=2$, while the other orbit words are constant and have boundary count $0$.
Hence $d_w(8)=2$ by \Cref{lem:shift-cycle-decomp,eq:transition-count}.

Here $\mathcal{O}_0$ is the unique active orbit, while each of the other $g-1=3$ orbits may be chosen to be all-$0$ or all-$1$ independently.
Once the active orbit word is fixed, these independent choices do not affect $d_w(k)$; this is the basic source of the $2^{g-1}$ factor in prescribed-shift counts and in the generic genuine-interval SP count of \Cref{prop:SP-interval,thm:countSP}.  In the subgroup-shaped exact-minimum classes, the additional primitive quotient--background condition replaces this free-orbit factor by the primitive-word count.
\end{example}

\paragraph{Cyclic intervals.}
For $m\ge 1$, we call a subset $I\subseteq\Zquot{m}$ a \emph{cyclic interval of length $b$} if
\[
I=\{a,a+1,\dots,a+b-1\}\subseteq\Zquot{m},
\]
for some $a\in\Zquot{m}$ and some $b\in\{0,1,\dots,m\}$, where addition is in $\Zquot{m}$.
The cases $b=0$ and $b=m$ mean $I=\varnothing$ and $I=\Zquot{m}$, respectively.
When $1\le b\le m-1$ we call $I$ a \emph{proper} cyclic interval.

\subsection{Active orbits and cyclic intervals}
If $d_w(k)=2$, only one orbit can carry the two boundary points.
We isolate that orbit and the cyclic interval form of its $1$-set.

\begin{lemma}[Exactly one active orbit]
\label{lem:active}
Let $n\ge2$, let $w\in\B^n$, and let $k\in\{1,\dots,n-1\}$.
Write $g:=\gcdop(n,k)$.
If $d_w(k)=2$, then exactly one $k$-orbit contributes a nonzero amount to the sum in \Cref{lem:shift-cycle-decomp}.
In other words, among the $g$ orbits of $\langle k\rangle$, precisely one orbit has internal distance~$2$, and all others have internal distance~$0$.
\end{lemma}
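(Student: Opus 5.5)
The argument rests on the orbit decomposition in \Cref{lem:shift-cycle-decomp}, in its boundary-count form \eqref{eq:transition-count}. Fix $k$ with $g=\gcdop(n,k)$ and $m=n/g$. That decomposition writes
\[
d_w(k)=\sum_{r=0}^{g-1}|\partial a^{(r)}|,
\]
where $a^{(r)}$ is the orbit word on $\mathcal O_r$. The $r$th summand is exactly the internal distance of that orbit, namely $\Ham(a^{(r)},\rot a^{(r)})$.

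First I will show that each summand is even. A cyclic binary word $x\in\B^m$ changes value an even number of times as one goes once around $\Zquot{m}$, because the walk must return to its starting value. Hence $|\partial x|$ is even. This is precisely the per-cycle parity step already carried out in the proof of \Cref{lem:dw-even}, and I will cite it rather than repeat it.

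Next, suppose $d_w(k)=2$. Then $2$ is written as a sum of $g$ nonnegative even integers. Any summand that is nonzero is at least $2$, so at most one summand can be nonzero. The total is $2>0$, so at least one summand is nonzero. Therefore exactly one orbit $\mathcal O_r$ has $|\partial a^{(r)}|=2$, and every other orbit has $|\partial a^{(s)}|=0$. By \Cref{lem:shift-cycle-decomp}, this says the orbit has internal distance $2$ and all other orbits have internal distance $0$, which is the claim.

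There is no real obstacle here. The only point needing care is to make sure that ``contribution'' in the statement is identified with the boundary count $|\partial a^{(r)}|$ via \eqref{eq:transition-count}. Once that identification is in place, the evenness of each contribution carries the whole argument. As a side remark, zero internal distance means the orbit word is constant, which is the fact used later for the free-orbit factor.
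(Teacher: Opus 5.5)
Your proof is correct and takes essentially the same approach as the paper. Both identify each orbit's contribution with the boundary count $|\partial a^{(r)}|$ via \eqref{eq:transition-count}, use that these counts are nonnegative even integers, and conclude that the only decomposition $2=2+0+\cdots+0$ forces exactly one active orbit.
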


\begin{proof}
For each $k$-orbit $\mathcal O_r$, let $a^{(r)}$ be its orbit word and set
\[
T_r:=|\partial a^{(r)}|.
\]
Then each $T_r$ is a nonnegative even integer.
By \Cref{eq:transition-count},
\[
2=d_w(k)=\sum_{r=0}^{g-1} T_r.
\]
Hence $(T_0,\dots,T_{g-1})$ is a partition of $2$ into nonnegative even integers.
The only such partition is
\[
2=2+0+\cdots+0.
\]
Therefore, exactly one index $r$ satisfies $T_r=2$, while every other index satisfies $T_r=0$.
This is exactly the asserted uniqueness of the active orbit.
\end{proof}

\begin{lemma}[The active-orbit $1$-set is one cyclic interval]
\label{lem:block}
Let $n\ge2$, let $w\in\B^n$, and let $k\in\{1,\dots,n-1\}$.
Write $g:=\gcdop(n,k)$ and $m:=n/g$.
Assume $d_w(k)=2$.
On the unique active $k$-orbit $\mathcal{O}$ from \Cref{lem:active}, the restriction of $w$ has exactly two boundary points.
Equivalently, in the cyclic order of $\mathcal{O}$, the $1$-set forms a single cyclic interval of length $b$ for some $1\le b\le m-1$; that is, after a cyclic reindexing the active orbit word is $0^{m-b}1^b$.
All other $k$-orbits are constant.
\end{lemma}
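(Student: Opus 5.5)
The plan is to read the statement off \Cref{lem:active} combined with the boundary-count identity \eqref{eq:transition-count}, and then to make elementary observations about cyclic binary words with exactly two boundary points. Assume $d_w(k)=2$ and let $\mathcal O=\mathcal O_r$ be the unique active orbit supplied by \Cref{lem:active}. That lemma gives $|\partial a^{(r)}|=2$ and $|\partial a^{(s)}|=0$ for every $s\neq r$. This already proves the first assertion, that the restriction of $w$ to $\mathcal O$ in its cyclic order has exactly two boundary points. It also proves the last assertion: a cyclic word with empty boundary set satisfies $x_t=x_{t+1}$ for all $t$, so it is constant, and hence every other $k$-orbit is constant.

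The remaining work concerns one cyclic word $x=a^{(r)}\in\B^m$ with $\partial x=\{s_1,s_2\}$, where $s_1\neq s_2$. I would first note that $m\ge2$; indeed, when $m=1$ the boundary set is empty. Walking around $\Zquot{m}$, the value of $x$ changes only when we step across $s_1$ or $s_2$. The two arcs $\{s_1+1,\dots,s_2\}$ and $\{s_2+1,\dots,s_1\}$ partition $\Zquot{m}$, and $x$ is constant on each of them. The two constants must differ, because $x_{s_1}\neq x_{s_1+1}$. Consequently the $1$-set is exactly one of the two arcs, which is a cyclic interval $I=\{a,\dots,a+b-1\}$. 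Both arcs are nonempty, so $1\le b\le m-1$. Reindexing cyclically so that the interval occupies the final $b$ positions turns the word into $0^{m-b}1^b$.

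For the word ``equivalently'' in the statement, I would also check the converse. If the $1$-set is a proper cyclic interval, then its boundary set consists of exactly two positions: the step into the interval and the step out of it. These are distinct because $1\le b\le m-1$. So the two formulations agree.

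I do not expect a genuine obstacle here. The only step that needs care is the bookkeeping that shows each arc between consecutive boundary points is constant and that the two arcs take opposite values. I would handle it by a short induction along the arc: $x_{t+1}=x_t$ whenever $t\notin\partial x$.
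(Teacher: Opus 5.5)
Your proposal is correct and follows essentially the same route as the paper. You read off $|\partial a|=2$ on the active orbit, and the constancy of the other orbits, from \Cref{lem:active} and \eqref{eq:transition-count}; you then cut the cycle at the two boundary points into two nonempty arcs with opposite constant values, and check the converse for a proper cyclic interval. Your explicit arcs, and your remark that the constants differ because $x_{s_1}\neq x_{s_1+1}$, only spell out the paper's one-line justification in more detail.
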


\begin{proof}
Let $a=(a_0,\dots,a_{m-1})\in\B^m$ be the orbit word on the active orbit $\mathcal O$, written in cyclic order.
By \Cref{lem:active} and \Cref{eq:transition-count},
\[
|\partial a|=2.
\]
Every other $k$-orbit contributes $0$, hence has boundary count $0$ and is therefore constant.
Write
\[
\partial a=\{s,t\}\subseteq\Zquot{m}, \qquad s\neq t.
\]
The two boundary points cut the cycle $\Zquot{m}$ into two nonempty cyclic intervals $A$ and $B$.
Since $a$ changes value only when crossing a point of $\partial a$, it is constant on each of $A$ and $B$.  Because $s,t\in\partial a$, these two constant values are opposite.
After a cyclic reindexing that starts at one boundary point, the orbit word therefore has the form
\[
0^{m-b}1^b \quad \text{or} \quad 1^{m-b}0^b,
\]
for some $1\le b\le m-1$.
Equivalently, the $1$-set is a proper cyclic interval of length $b$.
Conversely, a proper cyclic interval has exactly two boundary points, so its orbit word contributes exactly $2$.
This proves the stated equivalence.
\end{proof}
\begin{remark}[Complement invariance]
\label{rem:complement-invariance}
For a binary word $w=(w_i)_{i\in\Zn}\in\B^n$, write $\overline{w}$ for the binary complement defined by
\[
(\overline{w})_i:=1-w_i\qquad (i\in\Zn).
\]
Then for every $k\in\Zn$,
\[
d_{\overline{w}}(k)=d_w(k),
\]
because $w_i\neq w_{i-k}$ holds if and only if $(1-w_i)\neq (1-w_{i-k})$.
Consequently,
\[
\minDist(\overline{w})=\minDist(w),
\qquad
\Ktwo(\overline{w})=\Ktwo(w).
\]
Thus passing from $w$ to $\overline{w}$ converts a co-singleton $1$-set on the active orbit into a singleton $1$-set without changing the minimizing-shift data.
\end{remark}

\subsection{When do \emph{all} multiples of \texorpdfstring{$k$}{k} have distance~2?}
In class $\DtwoSUB$, every nonzero shift in the subgroup generated by $k$ must have distance $2$; equivalently, if $k$ has order $m$, then $d_w(jk)=2$ for each nonzero residue $j\in\Zquot{m}$.
The local observations used below all come from the same interval-overlap computation, so we record them once in a single statement.

\begin{proposition}[Interval-shift criterion and its stable-block consequences]
\label{prop:interval-shift-criterion}
Let $m\ge 2$, let $I\subseteq\Zquot{m}$ be a proper cyclic interval of length $b\in\{1,\dots,m-1\}$, and let $j\in\{1,\dots,m-1\}$.
Set
\[
\beta:=\min\{b,m-b\},
\qquad
\delta:=\min\{j,m-j\}.
\]
Then
\[
|I\triangle(I+j)|=2\min\{\beta,\delta\}.
\]
Consequently:
\begin{enumerate}[label=(\alph*),leftmargin=2.5em]
\item
\[
|I\triangle(I+j)|=2
\quad\Longleftrightarrow\quad
b\in\{1,m-1\}\ \text{or}\ j\equiv \pm1\pmod m;
\]
\item if $m\in\{2,3\}$, then $|I\triangle(I+j)|=2$ for every nonzero $j$;
\item if $m\ge 4$ and $2\le b\le m-2$, then $|I\triangle(I+j)|=2$ if and only if $j\equiv \pm1\pmod m$;
\item if $w\in\B^n$ satisfies $d_w(k)=2$, and if the active $k$-orbit from \Cref{lem:block} has size $m=n/\gcdop(n,k)$ and interval length $b$, then
\[
d_w(jk)=2\ \text{for every }j=1,\dots,m-1
\quad\Longleftrightarrow\quad
b\in\{1,m-1\};
\]
\item under the same hypotheses, if $d_w(\ell k)=2$ for some $\ell\in\{1,\dots,m-1\}$ with $\ell\not\equiv \pm1\pmod m$, then $b\in\{1,m-1\}$.
\end{enumerate}
In particular, among non-half-turn subgroup-shaped cases whose active-orbit $1$-set is a singleton or co-singleton, quotient order $3$ is exactly the case in which the nonzero part of the subgroup is one antipodal minimizing pair.
\end{proposition}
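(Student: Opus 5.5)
The plan is to prove the symmetric-difference formula first and derive (a)--(e) from it, then handle the final sentence by a direct count.

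\textbf{Main formula.} Replacing $I$ by its complement $\Zquot{m}\setminus I$ does not change $I\triangle(I+j)$, and replacing $j$ by $m-j$ amounts to translating the symmetric difference by $-j$, which preserves its size. So we may assume $b\le m/2$ and $j\le m/2$, i.e.\ $\beta=b$ and $\delta=j$.

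Take $I=\{0,\dots,b-1\}$.
\begin{itemize}
\item If $j\ge b$, then $I+j=\{j,\dots,j+b-1\}$ is disjoint from $I$. This uses $j+b-1\le m-1$, which holds because $b,j\le m/2$. Hence $|I\triangle(I+j)|=2b$.
\item If $j<b$, the overlap $I\cap(I+j)=\{j,\dots,b-1\}$ has size $b-j$. It is a single interval, since $I+j$ does not wrap around to $0$ (again because $j+b-1\le m-1$). Hence $|I\triangle(I+j)|=2b-2(b-j)=2j$.
\end{itemize}
In both cases the size is $2\min\{b,j\}=2\min\{\beta,\delta\}$. The main obstacle is exactly this no-wraparound bookkeeping, so the reductions $b,j\le m/2$ are stated before the computation.

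\textbf{Items (a)--(c).} For (a), $2\min\{\beta,\delta\}=2$ holds iff $\beta=1$ or $\delta=1$, i.e.\ iff $b\in\{1,m-1\}$ or $j\equiv\pm1\pmod m$. For (b), when $m\le3$ we have $\delta\le\lfloor m/2\rfloor=1$ for every nonzero $j$. For (c), the hypothesis $2\le b\le m-2$ gives $\beta\ge2$, so (a) reduces to the condition $j\equiv\pm1$.

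\textbf{Items (d) and (e).} Suppose $d_w(k)=2$. By \Cref{lem:block}, every $k$-orbit except the active one is constant, and the active orbit's $1$-set is an interval $I$ of length $b$ in the orbit coordinate $t$. For $1\le j\le m-1$, the shift $jk$ preserves each $k$-orbit, because $\langle jk\rangle\subseteq\langle k\rangle$. Comparing $w$ with $\rot^{jk}w$ position by position, constant orbits contribute $0$, and on the active orbit the disagreement set is $I\triangle(I+j)$ in orbit coordinates. This gives
\[
d_w(jk)=|I\triangle(I+j)|.
\]
\begin{itemize}
\item (d): all $j$ give value $2$ iff $\beta=1$. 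The forward direction takes $j=\lfloor m/2\rfloor$, so $\delta\ge2$ when $m\ge4$. For $m\le3$ the condition $b\in\{1,m-1\}$ holds automatically, since $1\le b\le m-1$ leaves no other values. The converse follows from (a).
\item (e): this is immediate from (a), since $\ell\not\equiv\pm1$ forces $\beta=1$.
\end{itemize}

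\textbf{Final sentence.} Consider the subgroup-shaped set $\langle k\rangle\setminus\{0\}$, which has $m-1$ elements. It is a single antipodal pair $\{k,n-k\}$ with $k\ne n/2$ iff $m-1=2$, i.e.\ iff $m=3$. The case $m=2$ is the half-turn and is excluded. For $m\ge4$ the set has at least three elements. Conversely, when $m=3$, part (b) shows that both nonzero multiples of $k$ attain distance $2$.
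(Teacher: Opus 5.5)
Your proof is correct and follows essentially the same route as the paper: reduce by translation, complementation and $j\mapsto m-j$ to a non-wrapping interval with $\beta,\delta\le m/2$, compute the overlap to get $2\min\{\beta,\delta\}$, and then read (d)--(e) off the orbit identity $d_w(jk)=|I\triangle(I+j)|$. Your explicit handling of $m\le 3$ in (d) (where $b\in\{1,m-1\}$ is automatic) and your count $|\langle k\rangle\setminus\{0\}|=m-1$ for the closing sentence fill in details that the paper leaves implicit.
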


\begin{proof}
The quantity $|I\triangle(I+j)|$ is unchanged by translating $I$, by replacing $I$ with its complement (since $I^c\triangle(I^c+j)=I\triangle(I+j)$), and by replacing $j$ with $-j$ (equivalently, with $m-j$). Hence we may assume
\[
I=\{0,1,\dots,\beta-1\},
\qquad
1\le \beta\le m/2,
\qquad
1\le \delta\le m/2,
\qquad
j=\delta.
\]
Then $\beta+\delta\le m$, so $I+\delta$ does not wrap around $0$ and
\[
I\cap(I+\delta)=
\begin{cases}
\{\delta,\delta+1,\dots,\beta-1\},&\delta<\beta,\\
\varnothing,&\delta\ge \beta
\end{cases}.
\]
Hence $|I\cap(I+\delta)|=\max\{\beta-\delta,0\}$, and therefore
\[
|I\triangle(I+j)|=2\bigl(\beta-|I\cap(I+\delta)|\bigr)=2\min\{\beta,\delta\}.
\]
This proves part~(a); parts~(b) and~(c) are immediate consequences.
For parts~(d) and~(e), let $I$ be the $1$-set on the active $k$-orbit.
By \Cref{lem:block}, $I$ is a cyclic interval and every other $k$-orbit is constant, so
\[
d_w(jk)=|I\triangle(I+j)|\qquad (1\le j\le m-1).
\]
Part~(d) now restates part~(a) uniformly over all nonzero $j$, and part~(e) is the contrapositive of part~(c).
\end{proof}

The next proposition isolates the local subgroup-recovery step for the singleton/co-singleton branch.  The active $k$-orbit is first normalized by placing its unique exceptional point at the origin.  The quotient--background word then records the background values and their period subgroup.  The notation used in the proposition has the following local roles.
\begin{description}[leftmargin=3.8em,labelwidth=2.8em,labelsep=.6em]
\item[$x$] the original word satisfying $d_x(k)=2$;
\item[$w$] the rotated single-defect representative obtained from $x$ after the exceptional point is placed at index $0$;
\item[$c$] the quotient--background word in $\B^g$ on the $g=\gcdop(n,k)$ residue classes;
\item[$v$] the orbit-constant lift determined by $c$;
\item[$P$] the period subgroup $P=\rho\Zquot{g}$ of $c$;
\item[$\rho$] the subgroup divisor recovered from the period subgroup of $c$;
\item[$c_0$] the bit distinguishing the singleton and co-singleton $1$-set forms on the active orbit;
\item[$w(c)$] the single-defect word reconstructed from $c$ by the displayed normal-form formulas.
\end{description}
With this notation, the proposition gives the full minimizing-shift set $\Ktwo(w)$.  It also recovers the bit $c_0$.  The supplementary material contains a worked quotient--background example illustrating the same notation.

This proposition is the local step connecting the prescribed-shift analysis to the defect--background reconstruction in \Cref{sec:defect-background}.  That section uses it to identify the normalized representative, recover subgroup data from the quotient order, and pass to the necklace-level bijection used in \Cref{sec:enum}; for that reason the local recovery is proved here, before the global reconstruction is assembled.

\begin{proposition}[Single-defect subgroup recovery from the quotient--background word]
\label{prop:defect-quotient}
Let $x\in\B^n$ and let $k\in\{1,\dots,n-1\}$ satisfy $d_x(k)=2$.
Write
\[
g:=\gcdop(n,k),\qquad m:=n/g,
\]
and assume $m\ge 3$ and that the active $k$-orbit from \Cref{lem:active,lem:block} has interval length $b\in\{1,m-1\}$.
Rotate $x$ so that the unique exceptional point on that active orbit lies at index $0$, and call the rotated word $w$.
Equivalently, $w$ is in single-defect normal form relative to $k$: there is a quotient--background word $c=(c_0,\dots,c_{g-1})\in\B^g$ and its orbit-constant lift $v\in\B^n$ such that
\[
w_0=1-c_0,
\qquad
w_{tk}=c_0\ \ (1\le t\le m-1),
\qquad
w_{r+tk}=c_r\ \ (r\ne 0),
\]
while
\[
v_{r+tk}=c_r\qquad (0\le r\le g-1,\ t\in\Zquot{m}).
\]
Here $c_0$ is the common value on the nondefect points $tk$ with $1\le t\le m-1$ of the active $k$-orbit. Thus $c_0=0$ when the $1$-set on that orbit is a singleton ($b=1$), and $c_0=1$ when the $1$-set on that orbit is a co-singleton ($b=m-1$).
For use below, let $w(c)$ denote the word defined by the preceding formulas for $w_0$, $w_{tk}$, and $w_{r+tk}$.
For $\ell\in\Zn$, write uniquely
\[
\ell\equiv e+ak\pmod n,
\]
with $e\in\{0,1,\dots,g-1\}$ and $a\in\Zquot{m}$, and let
\[
P:=\{e\in\Zquot{g}:\rot^e c=c\},
\]
be the period subgroup of the quotient--background word.
Then:
\begin{enumerate}[label=(\alph*),leftmargin=2.5em]
\item for every $\ell\in\Zn$,
\[
d_v(\ell)=m\,d_c(e);
\]
\item for every nonzero $\ell\in\Zn$,
\[
d_w(\ell)=2
\quad\Longleftrightarrow\quad
e\in P;
\]
\item if $\rho\mid g$ is the unique divisor with $P=\rho\Zquot{g}$, then
\[
\Ktwo(w)=\rho\Zn\setminus\{0\}.
\]
\item the bit $c_0$ distinguishes the singleton and co-singleton $1$-set forms on the active orbit:
\[
c_0=0 \iff w_0=1 \text{ and } w_{tk}=0\ \ (1\le t\le m-1),
\]
so the $1$-set on the active $k$-orbit is a singleton, while
\[
c_0=1 \iff w_0=0 \text{ and } w_{tk}=1\ \ (1\le t\le m-1),
\]
so the $1$-set on the active $k$-orbit is a co-singleton.
\end{enumerate}
In particular, once a word is put in single-defect normal form relative to a minimizing shift, the quotient--background word and its period subgroup determine the full minimizing-shift set $\Ktwo(w)$ exactly; the counting results below only refine this same data by quotient order and primitivity.
\end{proposition}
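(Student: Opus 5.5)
The plan is to realize $w$ as a one-point perturbation of the orbit-constant lift $v$, namely $w=v$ except at index $0$, where $w_0=1-v_0$. First I check that the normal form really arises from $x$. The subgroup $\langle k\rangle\le\Zn$ equals $g\Zn$, so the $k$-orbits are exactly the residue classes modulo $g$, and the orbit $\mathcal O_r$ is $\{r+tk\}$. By \Cref{lem:block}, every non-active orbit is constant, which defines $c_r$ for $r\ne0$. The active orbit word is $0^{m-1}1$ or $1^{m-1}0$ up to cyclic reindexing, because $b\in\{1,m-1\}$. Rotating $x$ so that the unique exceptional point sits at index $0$ turns the active orbit into $\mathcal O_0$ and produces exactly the displayed formulas, with $c_0$ the common nondefect value. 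Part~(d) can then be read off directly from these formulas.

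For part~(a), use $v_i=c_{i\bmod g}$. Since $ak\equiv0\pmod g$, we have $\ell\equiv e\pmod g$. Hence $v_i\ne v_{i-\ell}$ holds iff $c_{\bar i}\ne c_{\bar i-e}$, where $\bar i=i\bmod g$. Each residue $\bar i\in\Zquot g$ is hit by exactly $m$ indices $i$, and summing gives $d_v(\ell)=m\,d_c(e)$.

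For part~(b), fix $\ell\ne0$ and compare $d_w(\ell)$ with $d_v(\ell)$. The only comparison pairs $(i,i-\ell)$ that involve index $0$ are $i=0$ and $i=\ell$; these are distinct because $\ell\ne0$. Flipping $v_0$ toggles exactly these two indicator terms, so
\[
d_w(\ell)=d_v(\ell)+\bigl(1-2[v_0\ne v_{-\ell}]\bigr)+\bigl(1-2[v_\ell\ne v_0]\bigr),
\]
and therefore $d_w(\ell)\ge d_v(\ell)-2$.

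Suppose $e\in P$. Then $d_c(e)=0$, so $d_v(\ell)=0$ by part~(a). Moreover $v_{\pm\ell}=c_{\pm e}=c_0=v_0$, because $P$ is a subgroup. Both corrections equal $+1$, and $d_w(\ell)=2$.

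Suppose $e\notin P$. Then $d_c(e)>0$, so $d_c(e)\ge2$ by \Cref{lem:dw-even} applied to $c$. Part~(a) gives $d_v(\ell)\ge 2m\ge6$, and therefore $d_w(\ell)\ge4$. This is the one place where the hypothesis $m\ge3$ is used. Since $m\ge3$ already gives the strict inequality, no finer bookkeeping of the correction terms should be needed; I regard this lower bound as the only nontrivial step.

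For part~(c), write $P=\rho\Zquot g$ with $\rho\mid g$. Since $\ell\equiv e\pmod g$ and $\rho\mid g$, we have $e\in\rho\Zquot g$ iff $\rho\mid e$ iff $\rho\mid\ell$. So part~(b) gives $\Ktwo(w)=\{\ell\ne0:\rho\mid\ell\}=\rho\Zn\setminus\{0\}$. The closing assertion of the proposition is then just a restatement of part~(c).
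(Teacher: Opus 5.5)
Your proof is correct and follows essentially the same route as the paper's: the orbit-constant lift $v$ gives $d_v(\ell)=m\,d_c(e)$, and the one-point perturbation $w$ of $v$ yields $d_w(\ell)=2$ when $e\in P$ and $d_w(\ell)\ge 2m-2\ge 4$ otherwise, after which the preimage of $\rho\Zquot{g}$ gives (c). The only cosmetic difference is that you get $d_w(\ell)\ge d_v(\ell)-2$ (and the exact value $2$ when $e\in P$) from an explicit two-term toggle formula, whereas the paper uses the triangle inequality and the observation that $w$ and $\rot^\ell w$ differ exactly at $0$ and $\ell$.
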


\begin{proof}
Fix $\ell\in\Zn$ and write $\ell\equiv e+ak\pmod n$.
Because $v$ is constant on each $k$-orbit with orbit value $c_r$, shifting by $\ell$ carries $\mathcal O_r$ to $\mathcal O_{r-e}$; hence every index on $\mathcal O_r$ compares $c_r$ with $c_{r-e}$.
Thus the whole orbit contributes either $0$ or $m$ mismatches, and summing over $r$ gives
\[
d_v(\ell)=m\,\bigl|\{r\in\Zquot{g}:c_r\ne c_{r-e}\}\bigr|=m\,d_c(e),
\]
proving part~(a).

Assume now that $\ell\ne 0$.
If $e\in P$, then $\rot^e c=c$, so part~(a) gives $d_v(\ell)=0$, that is, $\rot^\ell v=v$.
Since $w$ differs from $v$ only at $0$, while $\rot^\ell w$ differs from $v$ only at $\ell$, the words $w$ and $\rot^\ell w$ differ exactly at those two positions; hence $d_w(\ell)=2$.
If instead $e\notin P$, then $d_c(e)$ is a positive even integer by \Cref{lem:dw-even}, so $d_c(e)\ge 2$ and part~(a) yields $d_v(\ell)\ge 2m$.
Since $w$ and $v$ differ only at the defect position $0$, we have
\[
\Ham(w,v)=1,
\qquad
\Ham(\rot^\ell w,\rot^\ell v)=\Ham(w,v)=1.
\]
Applying the triangle inequality for Hamming distance to the pairs $(w,\rot^\ell w)$ and $(v,\rot^\ell v)$ gives
\[
\bigl|\Ham(w,\rot^\ell w)-\Ham(v,\rot^\ell v)\bigr|
   \le \Ham(w,v)+\Ham(\rot^\ell w,\rot^\ell v)=2.
\]
Therefore
\[
d_w(\ell)=\Ham(w,\rot^\ell w)\ge \Ham(v,\rot^\ell v)-2=d_v(\ell)-2\ge 2m-2\ge 4,
\]
because $m\ge 3$.
Thus $d_w(\ell)=2$ holds exactly when $e\in P$, proving part~(b).

For part~(c),
\[
\Ktwo(w)=\{\ell\in\Zn\setminus\{0\}:\ell\bmod g\in P\}.
\]
If $P=\rho\Zquot{g}$, its full preimage in $\Zn$ is $\rho\Zn$, so $\Ktwo(w)=\rho\Zn\setminus\{0\}$.
Part~(d) is immediate from the defining formulas
\[
w_0=1-c_0,
\qquad
w_{tk}=c_0\ \ (1\le t\le m-1),
\]
in \Cref{prop:defect-quotient}.
\end{proof}

\subsection{Normal forms relative to a fixed minimizing shift}
Fixing one minimizing shift $k$ leaves only two local branches.  In the genuine-interval branch, the full minimizer set is the single antipodal pair $\{\pm k\}$.  In the single-defect normal-form branch of \Cref{prop:defect-quotient}, the quotient--background period subgroup recovers the full minimizer set and records the singleton/co-singleton $1$-set form on the active orbit.

\begin{proposition}[Normal forms from one minimizing shift]
\label{prop:normalforms}
Let $w\in\B^n$ satisfy $\minDist(w)=2$, and fix $k\in\Ktwo(w)$.
Write
\[
g:=\gcdop(n,k),\qquad m:=n/g,
\]
so the subgroup generated by $k$ is $\langle k\rangle=g\Zn$ and has order $m$.
Let $\mathcal O_0,\dots,\mathcal O_{g-1}$ be the $k$-orbits, chosen so that $\mathcal O_0$ is the unique active orbit from \Cref{lem:active}.
Let $b\in\{1,\dots,m-1\}$ be the length of the $1$-set on $\mathcal O_0$, viewed as the cyclic interval from \Cref{lem:block}.
Then the following hold.
\begin{enumerate}[label=(\roman*),leftmargin=2.5em]
\item If $2\le b\le m-2$, then
\[
\Ktwo(w)=\{k,n-k\}.
\]
\item If $b\in\{1,m-1\}$ and $m\ge 3$, then there exists a unique divisor $\tau\mid g$ such that
\[
\Ktwo(w)=\tau\Zn\setminus\{0\}.
\]
Equivalently, after rotating $w$ so that the unique exceptional point on the active $k$-orbit lies at index $0$, the resulting representative is in the single-defect normal form of \Cref{prop:defect-quotient}.  In that normal form, $c_0=0$ records that the $1$-set on the active orbit is a singleton ($b=1$), and $c_0=1$ records that it is a co-singleton ($b=m-1$).
\item If $m=2$, then $k=n/2$ and this is the half-turn case.
\end{enumerate}
\end{proposition}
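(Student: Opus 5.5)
The plan is to handle the three cases separately, using the fact that, by \Cref{lem:block}, $w$ is fully described relative to $k$: one active orbit $\mathcal O_0$ carrying a proper cyclic interval $I$ of length $b$ in the $k$-order, and constant values $c_r$ on the other orbits $\mathcal O_r$.

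Case (iii) is immediate. If $m=2$ then $g=n/2$, and since $1\le k<n$ with $\gcdop(n,k)=n/2$, we must have $k=n/2$.

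Case (ii) is a direct application of \Cref{prop:defect-quotient}. With $b\in\{1,m-1\}$ and $m\ge3$, the active orbit has a unique exceptional point: the single $1$ when $b=1$, or the single $0$ when $b=m-1$. Rotating this point to index $0$ gives the single-defect normal form with background word $c$ and bit $c_0$ as in that proposition. Rotation does not change $\Ktwo$. Part (c) of \Cref{prop:defect-quotient} then gives $\Ktwo(w)=\rho\Zn\setminus\{0\}$, where $P=\rho\Zquot{g}$, so we take $\tau=\rho$, which divides $g$. For uniqueness, note that $\Ktwo(w)$ is nonempty because it contains $k$. Hence the subgroup $\tau\Zn=\Ktwo(w)\cup\{0\}$ is determined, and so is $\tau=\gcdop(n,\Ktwo(w))$. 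The statement about $c_0$ is part (d).

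Case (i), with $2\le b\le m-2$ and so $m\ge4$, is the real content. We must show that a shift $\ell\notin\{\pm k\}$ has $d_w(\ell)\neq2$. Since $\minDist(w)=2$, it suffices to show $d_w(\ell)\ge4$. Split according to whether $\ell\in\langle k\rangle=g\Zn$.

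If $\ell=jk$ with $j\not\equiv0,\pm1\pmod m$, then all orbits other than $\mathcal O_0$ are constant and $\mathcal O_0$ is mapped to itself. So $d_w(jk)=|I\triangle(I+j)|=2\min\{\beta,\delta\}\ge4$ by \Cref{prop:interval-shift-criterion}, because both $\beta$ and $\delta$ are at least $2$.

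If $\ell\notin g\Zn$, write $\ell\equiv e+ak$ with $e\not\equiv0\pmod g$. Then the shift carries $\mathcal O_r$ onto $\mathcal O_{r+e}$ (or $r-e$, depending on convention). In particular, $\mathcal O_0$ and $\mathcal O_{e}$ are distinct orbits exchanging positions with each other. Let $c$ be the constant value on the nonactive orbit that gets compared with $\mathcal O_0$. The active orbit word contains at least $b\ge2$ ones and at least $m-b\ge2$ zeros, so comparing it with a constant orbit gives at least $\min\{b,m-b\}\ge2$ mismatches. The same count applies to positions of the other orbit, $\mathcal O_{-e}$, whose points are compared with points of $\mathcal O_0$. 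Since $e\ne0$, the positions of $\mathcal O_0$ and the positions of $\mathcal O_{-e}$ are disjoint sets, so these mismatches add, giving $d_w(\ell)\ge4$.

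The step I expect to need the most care is this bookkeeping: the points of $\mathcal O_0$ are compared against one constant orbit, and the points of a different orbit are compared against $\mathcal O_0$, so the two groups of mismatches are counted on disjoint index sets. Once that is checked, $\Ktwo(w)=\{k,n-k\}$, and $k\ne n-k$ because $m\ge4$ rules out $k=n/2$.
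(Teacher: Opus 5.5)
Your proposal is correct and follows the paper's proof essentially step for step. It uses the same three-case split, the interval-shift criterion for $\ell\in\langle k\rangle$, the two disjoint orbit contributions each of size at least $\min\{b,m-b\}\ge2$ for $\ell\notin\langle k\rangle$ (the paper's $\mathcal O_{g-r}$ and $\mathcal O_r$), and a direct appeal to \Cref{prop:defect-quotient} for the singleton/co-singleton case. The only slip is cosmetic and does not affect the count, since both orbits differ from $\mathcal O_0$: under the paper's convention $(\rot^\ell w)_i=w_{i-\ell}$, positions of $\mathcal O_0$ are compared with $\mathcal O_{-e}$ and positions of $\mathcal O_e$ with $\mathcal O_0$, which is not an ``exchange'' in general.
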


\begin{proof}
If $m=2$, then $k$ is the unique element of order $2$ in $\Zn$, namely the half-turn $n/2$.
Assume $m\ge 3$.
Choose a residue representative $r_0\in\{0,\dots,g-1\}$ for the active $k$-orbit, and relabel the $k$-orbits by
\[
\mathcal O_s=\{r_0+s+tk:t\in\Zquot{m}\}\qquad(s\in\Zquot{g}),
\]
so that $\mathcal O_0$ is the active orbit.
By \Cref{lem:block}, the active orbit has a $1$-set that is a cyclic interval of length $b$, and every other $k$-orbit is constant.

If $2\le b\le m-2$, then for shifts inside $\langle k\rangle$ the only possibilities with distance $2$ are $\pm k$ by \Cref{prop:interval-shift-criterion}.
If $\ell\notin\langle k\rangle$, write $r:=\ell\bmod g\in\{1,\dots,g-1\}$, so $\mathcal O_r\neq\mathcal O_0$ and $\mathcal O_{g-r}\neq\mathcal O_0$.
Subtracting $\ell$ sends each orbit $\mathcal O_s$ to $\mathcal O_{s-r}$; in particular, every index of $\mathcal O_0$ is paired with an index of the constant orbit $\mathcal O_{g-r}$, and every index of $\mathcal O_r$ is paired with an index of $\mathcal O_0$.
Now $\mathcal O_0$ carries the interval word of length $b$, while $\mathcal O_{g-r}$ and $\mathcal O_r$ are constant with values $c_{g-r},c_r\in\B$.
Hence the $\mathcal O_0$ contribution is
\[
\#\{t\in\Zquot{m}: w_{r_0+tk}\neq c_{g-r}\}
=
\begin{cases}
b,& c_{g-r}=0,\\
m-b,& c_{g-r}=1
\end{cases},
\]
and the $\mathcal O_r$ contribution is
\[
\#\{t\in\Zquot{m}: c_r\neq w_{r_0+tk}\}
=
\begin{cases}
b,& c_r=0,\\
m-b,& c_r=1
\end{cases}.
\]
Because $2\le b\le m-2$, both orbit contributions are at least $2$, so $d_w(\ell)\ge 4$.
Thus no such $\ell$ lies in $\Ktwo(w)$, and therefore $\Ktwo(w)=\{k,n-k\}$.

If $b\in\{1,m-1\}$, rotate $w$ so that the unique exceptional point on the active orbit lies at $0\in\mathcal O_0$, and call the rotated word $x$.
Then $x$ is in the single-defect normal form of \Cref{prop:defect-quotient}, which yields a unique divisor $\tau\mid g$ with
\[
\Ktwo(x)=\tau\Zn\setminus\{0\}.
\]
Because rotation preserves the minimizing-shift set, $\Ktwo(w)=\Ktwo(x)$, so
\[
\Ktwo(w)=\tau\Zn\setminus\{0\}.
\]
This is exactly the subgroup-shaped alternative in part~(ii).
\end{proof}

The next lemma isolates the primitivity of the normalized representatives used below in the half-turn, interval, and single-defect counts.

\begin{lemma}[Primitive normalized representatives]
\label{lem:normalized-primitive}
The normalized representatives used below in the HT, genuine-interval SP, and single-defect branches are primitive. More precisely:
\begin{enumerate}[label=(\alph*),leftmargin=2.5em]
\item if $n=2m$, $w_0=1$, $w_m=0$, and $w_j=w_{j+m}$ for $1\le j\le m-1$, then $w$ is primitive;
\item let $k\in\{1,\dots,n-1\}$ have order $m\ge 4$. If the unique active $k$-orbit is
\[
O_0:=\{0,k,\dots,(m-1)k\},
\]
its $1$-set on $O_0$ is the interval $\{0,1,\dots,b-1\}\subseteq\Zquot{m}$ with $2\le b\le m-2$, and every other $k$-orbit is constant, then $w$ is primitive;
\item if $w$ is in the single-defect normal form of \Cref{prop:defect-quotient} relative to a shift $k$, with $m:=n/\gcdop(n,k)\ge 3$, then $w$ is primitive.
\end{enumerate}
\end{lemma}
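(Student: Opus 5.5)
The cleanest route is to use \Cref{lem:positive-minimum-primitive}: any word with $d_w(k)=2$ for some nonzero shift $k$ is primitive. So in each of (a)--(c) it suffices to exhibit one nonzero shift with distance exactly $2$. Here $d_w(k)$ is computed by the orbit decomposition of \Cref{lem:shift-cycle-decomp}, i.e.\ by \eqref{eq:transition-count}.

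For (a), take $k=m=n/2$. The $k$-orbits are the pairs $\{j,j+m\}$, each with $g=m$ orbits of length $2$. For $1\le j\le m-1$ the orbit word is constant, since $w_j=w_{j+m}$, and contributes $0$. The orbit $\{0,m\}$ has word $(1,0)$, which has two boundary points. Hence $d_w(m)=2$, and primitivity follows. Alternatively, one can argue directly: a period $p$ would have to satisfy $p\mid n$ with $p\ne m$, and the single disagreement at the half-turn obstructs every period. The citation route avoids that case work, so we use it.

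For (b), the hypotheses say the active orbit word $a^{(0)}$ is the interval word $1^b0^{m-b}$, which has exactly two boundary points, while every other $k$-orbit is constant. By \eqref{eq:transition-count}, $d_w(k)=2$, and \Cref{lem:positive-minimum-primitive} again gives primitivity. The bounds $m\ge4$ and $2\le b\le m-2$ are not actually needed for this argument; they only describe the branch in which the lemma is used.

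For (c), take $w=w(c)$ in single-defect normal form relative to $k$. On the active orbit $\{tk\}$ the word is $1-c_0$ at $t=0$ and $c_0$ elsewhere. This is a singleton or co-singleton, hence a proper cyclic interval, and it contributes $2$ boundary points because $m\ge 2$. All other orbits are constant with value $c_r$, so $d_w(k)=2$. Equivalently, this follows from \Cref{prop:defect-quotient}(b) with $\ell=k$, since $e=0\in P$; that proposition uses $m\ge3$. In either form, \Cref{lem:positive-minimum-primitive} concludes primitivity.

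The only point needing care is checking that each normalized form really has orbit contributions $(2,0,\dots,0)$ for the chosen shift, in particular that the active orbit word is non-constant. This holds in (a) because $w_0\ne w_m$, in (b) because $1\le b\le m-1$, and in (c) because $w_0=1-c_0\ne c_0=w_k$, using $m\ge2$. No further obstacle is expected.
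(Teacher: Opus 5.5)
Your proof is correct, but it takes a different route from the paper. The paper works directly with the stabilizer. It assumes $\rot^s w=w$ with $s\neq 0$ and rules this out in each part:
\begin{itemize}
\item in (a), $\{s,s+m\}$ must be an unequal opposite pair, hence equal to $\{0,m\}$, and $s=m$ is excluded by $w_0\neq w_m$;
\item in (b) and (c), a fixing rotation must carry the unique nonconstant $k$-orbit to itself, so $s=jk$;
\item this is then ruled out by the interval-shift criterion $|I\triangle(I+j)|>0$ in (b), and by $w_{tk}=c_0\neq 1-c_0=w_0$ in (c).
\end{itemize}
You instead exhibit one witness shift with $d_w(k)=2$, computed from the orbit boundary count \eqref{eq:transition-count}. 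You then invoke the final assertion of \Cref{lem:positive-minimum-primitive}, which rests on the scaling $d_w(k)=t\,d_u(k \bmod p)$ for $w=u^t$ with $t\ge 2$, so a periodic word never has distance exactly $2$.

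Your route is uniform across the three branches and shorter. It also shows that the side conditions $m\ge 4$, $2\le b\le m-2$, and $m\ge 3$ play no role in primitivity; only non-constancy of the active orbit word matters. It matches how the lemma is used later, since \Cref{thm:countHT}, \Cref{prop:SP-interval}, and \Cref{cor:defect-primitive-readout} each pair primitivity with $d_w(k)=2$ anyway.

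The paper's route avoids the block-scaling argument and stays inside the orbit geometry of each normal form, mirroring the uniqueness arguments for the normalized representatives. The cost is some redundancy with \Cref{lem:positive-minimum-primitive}.
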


\begin{proof}
For part~(a), suppose $\rot^s w=w$ for some nonzero $s\in\Zn$. Then
\[
w_s=w_0=1,
\qquad
w_{s+m}=w_m=0,
\]
so the opposite pair $\{s,s+m\}$ is unequal. By hypothesis $\{0,m\}$ is the unique unequal opposite pair, hence $\{s,s+m\}=\{0,m\}$. Therefore, $s=0$ or $s=m$. But $s=m$ would force $w_m=w_0$, contrary to $w_m=0$ and $w_0=1$. Thus no such nonzero $s$ exists.

For part~(b), suppose $\rot^s w=w$ for some nonzero $s\in\Zn$. Because $O_0$ is the unique nonconstant $k$-orbit, the translate $O_0+s$ is also nonconstant, so uniqueness forces $O_0+s=O_0$. Hence $s=jk$ for some $j\in\{1,\dots,m-1\}$. On $O_0$ the $1$-set is the interval $I:=\{0,1,\dots,b-1\}\subseteq\Zquot{m}$, while every other $k$-orbit is constant, so
\[
d_w(jk)=|I\triangle(I+j)|.
\]
The interval-shift criterion in \Cref{prop:interval-shift-criterion} gives $|I\triangle(I+j)|>0$, since $I$ is a proper cyclic interval and $j\ne 0$ in $\Zquot{m}$. Hence $d_w(jk)>0$, contradicting $\rot^s w=w$.

For part~(c), suppose $\rot^s w=w$ for some nonzero $s\in\Zn$. If $s\notin\langle k\rangle$, then rotation by $s$ carries the unique active $k$-orbit of the single-defect normal form to a different $k$-orbit, which would then also be nonconstant, a contradiction. Thus $s=tk$ for some $t\in\{1,\dots,m-1\}$. By the defining formulas in \Cref{prop:defect-quotient},
\[
w_{tk}=c_0,
\qquad
w_0=1-c_0.
\]
But $\rot^s w=w$ gives $w_{tk}=w_0$, which is impossible. Hence $w$ is primitive.
\end{proof}

\section{Minimizer-set compatibility and the distance-\texorpdfstring{$2$}{2} collapse}
\label{sec:trichotomy}

We next record how different prescribed-shift witnesses interact.
The finite-difference notation below is a compact compatibility calculus for minimizer sets.
The subsection on cyclic derivatives and support overlap is not used as a dependency in the proof of \Cref{thm:trichotomy}; it is included to give a compact consistency check for simultaneous minimizers before the collapse theorem.
For distance $2$, each minimizing support has only two points; the collapse proof then uses the prescribed-shift normal-form dichotomy of \Cref{prop:normalforms} together with the subgroup arithmetic of \Cref{lem:subgroup-arith}.
\subsection{Cyclic derivatives and support overlap}
For $w\in\B^n$ and $k\in\Zn$, define the binary cyclic derivative
\[
\Delta_k w:=w+\rot^k w\pmod 2.
\]
Thus $(\Delta_k w)_i=1$ exactly when $w_i\ne w_{i-k}$, and therefore
\[
d_w(k)=|\operatorname{supp}(\Delta_k w)|.
\]

\begin{proposition}[Cocycle identity for rotational mismatches]
\label{prop:delta-cocycle}
Let $n\ge2$ and let $w\in\B^n$.  For all $k,\ell\in\Zn$,
\[
\Delta_{k+\ell}w=\Delta_k w+\rot^k(\Delta_\ell w)\pmod 2.
\]
Consequently, if $B_k:=\operatorname{supp}(\Delta_k w)$, then
\[
B_{k+\ell}=B_k\triangle \rot^k B_\ell,
\qquad
 d_w(k+\ell)=d_w(k)+d_w(\ell)-2|B_k\cap \rot^kB_\ell|.
\]
\end{proposition}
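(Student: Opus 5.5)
The plan is to verify the cocycle identity pointwise from the definition of $\Delta$ and the convention $(\rot^k u)_i=u_{i-k}$, and then derive the support and distance statements from it by counting.

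First, for each $i\in\Zn$ compute
\[
(\Delta_k w)_i+(\rot^k\Delta_\ell w)_i
=(w_i+w_{i-k})+(\Delta_\ell w)_{i-k}
=(w_i+w_{i-k})+(w_{i-k}+w_{i-k-\ell}).
\]
Modulo $2$ the two copies of $w_{i-k}$ cancel. What remains is $w_i+w_{i-(k+\ell)}=(\Delta_{k+\ell}w)_i$, using $\rot^{k+\ell}=\rot^k\rot^\ell$, i.e. $(\rot^{k+\ell}w)_i=w_{i-k-\ell}$. This telescoping cancellation is the whole content of the identity. The only place that needs care is the index convention: the rotation acts on $\Delta_\ell w$ by shifting its index by $-k$. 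Indices are read in $\Zn$, so no case split is needed even when $k+\ell\equiv 0$, where both sides vanish.

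Next, for the support statement I use two facts. The support of a sum of two $\B$-vectors mod $2$ is the symmetric difference of their supports. Also $\operatorname{supp}(\rot^k u)=\rot^k\operatorname{supp}(u)$, where rotation acts on subsets of $\Zn$ by translation $S\mapsto S+k$. Together these give $B_{k+\ell}=B_k\triangle\rot^kB_\ell$.

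Finally, for the distance formula I take cardinalities, using $|X\triangle Y|=|X|+|Y|-2|X\cap Y|$ and the fact that translation preserves cardinality, so $|\rot^kB_\ell|=|B_\ell|=d_w(\ell)$. Combining this with $d_w(k)=|B_k|$ from the displayed identity $d_w(k)=|\operatorname{supp}(\Delta_kw)|$ gives the stated formula. I expect no real obstacle: the proof is routine, and the only subtlety is getting the direction of the rotation on the derivative right.
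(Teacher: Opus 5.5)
Your proposal is correct and follows the same route as the paper's proof: the pointwise $\mathbb{F}_2$ computation in which the two copies of $w_{i-k}$ cancel, then passage to supports (sum mod $2$ becomes symmetric difference, with $\rot^k$ acting on subsets by translation), and finally $|A\triangle B|=|A|+|B|-2|A\cap B|$. You also spell out two small points the paper leaves implicit, namely $\operatorname{supp}(\rot^k u)=\rot^k\operatorname{supp}(u)$ and $|\rot^k B_\ell|=d_w(\ell)$.
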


\begin{proof}
At index $i$,
\[
(\Delta_k w)_i+(\rot^k\Delta_\ell w)_i
=(w_i+w_{i-k})+(w_{i-k}+w_{i-k-\ell})
=w_i+w_{i-k-\ell}
=(\Delta_{k+\ell}w)_i,
\]
over $\mathbb F_2$.
Taking supports converts addition over $\mathbb F_2$ into symmetric difference, and the displayed cardinality formula for $d_w(k+\ell)$ follows from $|A\triangle B|=|A|+|B|-2|A\cap B|$.
\end{proof}

\begin{lemma}[Support-overlap bound for distance-$2$ minimizers]
\label{lem:minimizer-overlap-bound}
Let $w\in\B^n$ satisfy $\minDist(w)=2$.
For $k\in\Ktwo(w)$ write $B_k:=\operatorname{supp}(\Delta_k w)$.
If $k,\ell\in\Ktwo(w)$ and $k+\ell\ne0$ in $\Zn$, then
\[
|B_k\cap\rot^k B_\ell|\le 1.
\]
Moreover,
\[
k+\ell\in\Ktwo(w)
\quad\Longleftrightarrow\quad
|B_k\cap\rot^kB_\ell|=1.
\]
\end{lemma}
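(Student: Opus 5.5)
The plan is to read both claims off the cardinality identity of \Cref{prop:delta-cocycle}, using that every nonzero shift has distance at least $2$. Fix $k,\ell\in\Ktwo(w)$ with $k+\ell\ne0$ in $\Zn$. By definition $d_w(k)=d_w(\ell)=2$, so $|B_k|=|B_\ell|=2$, and rotation preserves cardinality, so $|\rot^kB_\ell|=2$. The cocycle identity gives
\[
d_w(k+\ell)=d_w(k)+d_w(\ell)-2|B_k\cap\rot^kB_\ell|=4-2|B_k\cap\rot^kB_\ell|.
\]

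For the bound, observe that $k+\ell$ is a nonzero shift. The hypothesis $\minDist(w)=2$ therefore gives $d_w(k+\ell)\ge 2$, and hence $4-2|B_k\cap\rot^kB_\ell|\ge2$, which is $|B_k\cap\rot^kB_\ell|\le1$. Equivalently, one could note that the intersection has size at most $2$ and rule out size $2$. Size $2$ would force $d_w(k+\ell)=0$, so $\rot^{k+\ell}w=w$ with $k+\ell\ne0$, and this contradicts primitivity from \Cref{lem:positive-minimum-primitive}. Either route works, and I would present the first since it is the most direct.

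For the equivalence, the displayed identity shows that $d_w(k+\ell)=2$ holds exactly when $|B_k\cap\rot^kB_\ell|=1$. Since $k+\ell$ is nonzero, we may identify it with its representative in $\{1,\dots,n-1\}$, and then $d_w(k+\ell)=2$ is precisely the statement $k+\ell\in\Ktwo(w)$. Given the bound $\le 1$, the only other possibility is an empty intersection, and that corresponds to $d_w(k+\ell)=4$.

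There is no real obstacle here. The only points needing care are that the hypothesis $k+\ell\ne0$ is used both to invoke $\minDist(w)=2$ and to make membership in $\Ktwo(w)$ meaningful, and that the residue $k+\ell$ must be read modulo $n$ when it is identified with an element of $\{1,\dots,n-1\}$.
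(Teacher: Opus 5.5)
Your proposal is correct and matches the paper's proof: both apply the cardinality formula from \Cref{prop:delta-cocycle} to get $d_w(k+\ell)=4-2|B_k\cap\rot^kB_\ell|$. Both then use $k+\ell\ne0$ with $\minDist(w)=2$ to obtain the bound, and read the equivalence off the case of equality.
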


\begin{proof}
Since $k,\ell\in\Ktwo(w)$, we have $|B_k|=|B_\ell|=2$.
The cardinality formula in \Cref{prop:delta-cocycle} gives
\[
d_w(k+\ell)=4-2|B_k\cap\rot^kB_\ell|.
\]
Because $k+\ell\ne0$, the definition of $\minDist(w)=2$ gives $d_w(k+\ell)\ge2$, hence
$|B_k\cap\rot^kB_\ell|\le 1$.
Equality holds exactly when $d_w(k+\ell)=2$, that is, exactly when $k+\ell\in\Ktwo(w)$.
\end{proof}

\begin{remark}[Distance-$2$ boundary supports]
When $\minDist(w)=2$, each minimizing derivative support $B_k$ has size $2$.
The cocycle identity therefore gives a compact way to check compatibility among simultaneous minimizers.
In this paper the complete distance-$2$ classifier is proved in the original orbit language by \Cref{prop:normalforms} and \Cref{lem:subgroup-arith}; the support-overlap statement above gives a consistency check for simultaneous minimizers.
The derivative calculation above is a consistency check rather than a dependency for the proof of \Cref{thm:trichotomy}; the proof begins in the next subsection.
\end{remark}

\subsection{\texorpdfstring{The distance-$2$ collapse theorem}{The distance-2 collapse theorem}}

We now classify binary necklaces $\mathcal N$ with $\minDist(\mathcal N)=2$ by the shape of $\Ktwo(\mathcal N)$, using the representative-independent convention from \Cref{rem:necklace-word-invariants}.
The proof has two inputs. First, \Cref{prop:normalforms} says that any non-half-turn minimizing shift belongs to either the genuine-interval branch or the single-defect normal-form branch. In the genuine-interval branch, it gives the unique antipodal minimizing pair; in the single-defect branch, the whole minimizer set is the nonzero part of one subgroup of $\Zn$. Second, \Cref{lem:subgroup-arith} determines the HT/SP/SUB label from the order of that subgroup. These inputs leave no residual multi-pair case at distance $2$.

\begin{lemma}[Negation symmetry of $\Ktwo(w)$]
\label{lem:K2sym}
Let $w\in\B^n$ with $\minDist(w)=2$.
Then $k\in\Ktwo(w)$ if and only if $n-k\in\Ktwo(w)$.
In particular, $\Ktwo(w)$ is a disjoint union of antipodal pairs $\{k,n-k\}$, except that when $n$ is even the half-turn $n/2$ may appear as a singleton.
\end{lemma}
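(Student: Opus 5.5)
The plan is to reduce everything to the symmetry $d_w(k)=d_w(n-k)$, which the paper records right after \eqref{eq:dwdef}. I will verify that symmetry directly rather than cite it. The Hamming distance is invariant under applying the same rotation to both arguments, so
\[
d_w(n-k)=\Ham\bigl(w,\rot^{-k}w\bigr)=\Ham\bigl(\rot^{k}w,\rot^{k}\rot^{-k}w\bigr)=\Ham\bigl(\rot^k w,w\bigr)=d_w(k).
\]
Here $\rot^{n-k}=\rot^{-k}$ because indices are read in $\Zn$.

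Given this, fix $k\in\{1,\dots,n-1\}$. Then $n-k\in\{1,\dots,n-1\}$ as well. By definition \eqref{eq:K2def}, $k\in\Ktwo(w)$ means $d_w(k)=2$, and this is equivalent to $d_w(n-k)=2$, i.e.\ to $n-k\in\Ktwo(w)$. The involution $k\mapsto n-k$ on $\{1,\dots,n-1\}$ applied once more gives the converse direction. Note that the hypothesis $\minDist(w)=2$ is not needed for the equivalence itself; it only guarantees that $\Ktwo(w)$ is the minimizer set.

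For the ``in particular'' part, I will observe that $k\mapsto n-k$ is an involution of $\{1,\dots,n-1\}$, and that we have just shown it preserves $\Ktwo(w)$. Its orbits are the two-element sets $\{k,n-k\}$ with $k\neq n-k$, together with fixed points. A fixed point satisfies $2k=n$, so it exists only when $n$ is even, and it is then the single point $n/2$. Hence $\Ktwo(w)$ is a disjoint union of antipodal pairs, plus possibly the singleton $\{n/2\}$ when $n$ is even.

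There is no real obstacle in this argument. The only care needed is to state the rotation invariance of $\Ham$ explicitly and to handle the fixed point of the involution correctly.
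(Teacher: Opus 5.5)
Your proof is correct and follows the same route as the paper, which also reduces the lemma to the symmetry $d_w(k)=d_w(n-k)$ noted after \eqref{eq:dwdef}. You additionally verify that symmetry via rotation invariance of $\Ham$, and you spell out the ``in particular'' clause through the orbits of the involution $k\mapsto n-k$, whose only possible fixed point is $n/2$; the paper leaves both of these steps implicit.
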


\begin{proof}
We always have $d_w(k)=d_w(n-k)$ by the symmetry noted after~\eqref{eq:dwdef}.
Thus $d_w(k)=2$ if and only if $d_w(n-k)=2$, which is exactly the claim.
\end{proof}

\begin{lemma}[Arithmetic of subgroup-shaped minimizer sets]
\label{lem:subgroup-arith}
Let $\tau\mid n$, set
\[
K:=\tau\Zn\setminus\{0\},
\qquad
M:=n/\tau.
\]
Then:
\begin{enumerate}[label=(\alph*),leftmargin=2.5em]
\item $|K|=M-1$;
\item if $n$ is even, then
\[
n/2\in K \iff M\text{ is even};
\]
\item
\[
|K|=2 \iff M=3.
\]
\end{enumerate}
\end{lemma}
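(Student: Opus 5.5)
The plan is to reduce everything to the structure of the cyclic subgroup $\tau\Zn$ of $\Zn$. Since $\tau\mid n$, this subgroup consists of the residues $0,\tau,2\tau,\dots,(M-1)\tau$. These $M$ residues are pairwise distinct modulo $n$, because $j\tau\equiv j'\tau\pmod n$ with $0\le j,j'<M$ forces $M\mid (j-j')$, hence $j=j'$. The subgroup is therefore cyclic of order $M=n/\tau$, and removing $0$ gives part~(a), $|K|=M-1$, at once.

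For part~(b), assume $n$ is even. By the preceding description, $n/2\in K$ exactly when $n/2=j\tau$ for some $1\le j\le M-1$. Equivalently, $n/2$ must be a multiple of $\tau$; it is automatically nonzero in $\Zn$ and less than $n$.

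\begin{itemize}
\item If $M$ is even, then $n/2=(M/2)\tau$ with $1\le M/2\le M-1$, so $n/2\in K$.
\item Conversely, if $n/2=j\tau$, then multiplying by $2$ gives $n=2j\tau$, so $M=2j$ is even.
\end{itemize}

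Alternatively, one can use that a cyclic group of order $M$ contains an element of order $2$ if and only if $M$ is even, and that $n/2$ is the unique element of order $2$ in $\Zn$. I would give the direct divisibility argument, since it avoids any appeal to group-theoretic uniqueness.

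Part~(c) follows immediately from part~(a): $|K|=2$ holds exactly when $M-1=2$, that is, when $M=3$.

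I expect no real obstacle in any of these steps. The only point needing care is the identification of $\tau\Zn$ with exactly $M$ distinct residues, on which both (a) and (b) rest. This is where the hypothesis $\tau\mid n$ is used; for a general $\tau$ the subgroup generated would be $\gcdop(n,\tau)\Zn$ instead. I will state that identification explicitly first and then derive the three parts in order.
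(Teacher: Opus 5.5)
Your proposal is correct and follows essentially the same route as the paper's proof. You count the $M$ distinct residues of $\tau\Zn$ for (a), give a direct divisibility argument ($n/2=j\tau$ iff $M=2j$) for (b), and read off (c) from (a).
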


\begin{proof}
The subgroup $\tau\Zn\le \Zn$ has exactly $M=n/\tau$ elements, so removing $0$ leaves $|K|=M-1$.

If $n$ is even, then $n/2\in K$ exactly when $n/2$ is a nonzero multiple of $\tau$.
Writing $n=\tau M$, this is equivalent to $M/2\in\mathbb Z$, that is, to $M$ being even.

Finally, $|K|=2$ holds if and only if $M-1=2$, that is, if and only if $M=3$.
\end{proof}

\begin{theorem}[Distance-$2$ minimizer-set collapse]
\label{thm:trichotomy}
Fix $n\ge 2$ and let $\mathcal N$ be a binary necklace of length $n$ with $\minDist(\mathcal N)=2$, interpreted via \Cref{rem:necklace-word-invariants}.
Set
\[
K:=\Ktwo(\mathcal N).
\]
Then exactly one of the following holds:
\begin{enumerate}[label=(\roman*),leftmargin=2.5em]
\item \textbf{HT case ($\DtwoHT$; \Cref{def:DtwoHT}):}
$n$ is even and $n/2\in K$.
In this case, if $\tau:=\gcdop(n,K)$ and $M:=n/\tau$, then
\[
K=\tau\Zn\setminus\{0\},
\]
and $M$ is even; equivalently, $\mathcal N\in\DtwoHT(n;\tau)$.

\item \textbf{SP case ($\DtwoSP$; \Cref{def:DtwoSP}):}
$|K|=2$.
Equivalently,
\[
K=\{k,n-k\},
\]
for a unique $1\le k<n/2$, so $\mathcal N\in\DtwoSP(n)$.

\item \textbf{SUB case ($\DtwoSUB$; \Cref{def:DtwoSUB}):}
$|K|>2$ and, when $n$ is even, $n/2\notin K$.
Then there exists a unique divisor $\tau\mid n$ such that
\[
K=\tau\Zn\setminus\{0\}.
\]
Writing $M:=n/\tau$, we have $M$ odd and $M\ge 5$; equivalently, $\mathcal N\in\DtwoSUB(n;\tau)$.
\end{enumerate}
Consequently,
\[
\DtwoMP(n)=\varnothing,
\qquad
\Dtwo(n)=\DtwoHT(n)\;\dot\cup\;\DtwoSP(n)\;\dot\cup\;\DtwoSUB(n).
\]
Thus the residual multi-pair class is empty at distance $2$.
\end{theorem}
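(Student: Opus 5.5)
The plan is to fix a representative $w$ of $\mathcal N$ and pick one minimizing shift $k\in K$. We then apply \Cref{prop:normalforms} to that single shift and read off the label from the resulting shape of $K$, using \Cref{lem:subgroup-arith} for the arithmetic. Exclusivity of (i)--(iii) is immediate from the defining conditions: (i) requires $n/2\in K$; (ii) requires $|K|=2$, and in case (ii) $n/2\notin K$ by the SP definition; (iii) requires $|K|>2$ and $n/2\notin K$. The real content is exhaustiveness together with the structural conclusions in each case.

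\textbf{Case (i).} Suppose $n$ is even and $n/2\in K$. We need to show that $K$ is subgroup-shaped. If $K=\{n/2\}$, it equals $(n/2)\Zn\setminus\{0\}$ with $M=2$, and we are done. Otherwise choose $k\in K$ with $k\neq n/2$, so $m=n/\gcdop(n,k)\ge3$. If the active interval for $k$ had length $2\le b\le m-2$, \Cref{prop:normalforms}(i) would give $K=\{k,n-k\}$, which excludes $n/2$; this is a contradiction. Hence $b\in\{1,m-1\}$, and \Cref{prop:normalforms}(ii) gives $K=\tau'\Zn\setminus\{0\}$ for some divisor $\tau'$.

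In either subcase $K\cup\{0\}$ is a subgroup, so it equals $\gcdop(n,K)\Zn$. Thus $\tau'=\tau:=\gcdop(n,K)$. Since $n/2\in K$, \Cref{lem:subgroup-arith}(b) shows that $M$ is even.

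\textbf{Remaining cases.} Now assume HT fails. Then every $k\in K$ has $m\ge3$ by \Cref{prop:normalforms}(iii). Fix any $k\in K$.

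If $2\le b\le m-2$, then $K=\{k,n-k\}$ by \Cref{prop:normalforms}(i). We have $k\ne n-k$ because HT fails, so $|K|=2$, and relabelling gives a unique $k<n/2$.

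If instead $b\in\{1,m-1\}$, then $K=\tau\Zn\setminus\{0\}$ with $\tau=\gcdop(n,K)$, by the same subgroup argument. By \Cref{lem:subgroup-arith}:
\begin{itemize}
\item $M$ is odd, since otherwise $n/2\in K$ (when $M$ is even, $n$ is even);
\item if $M=3$, then $|K|=2$ and we are in case (ii);
\item otherwise $M\ge5$ and $|K|=M-1\ge4>2$, which is case (iii).
\end{itemize}
Here $M\ge3$ holds because $K$ is nonempty and $M=2$ is the HT case. Uniqueness of $\tau$ in (iii) follows because the subgroup $K\cup\{0\}$ determines its index.

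Conversely, if $|K|>2$ and HT fails, then the branch $2\le b\le m-2$ is impossible, since it forces $|K|=2$. So every such $K$ lands in (iii). This shows (i)--(iii) are exhaustive, and hence $\DtwoMP(n)=\varnothing$ and the disjoint union holds; the case $n=1$ holds by convention.

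\textbf{Main obstacle.} The only delicate point is the HT case. There we must rule out a genuine-interval minimizer coexisting with the half-turn, which is handled above by the observation that \Cref{prop:normalforms}(i) pins $K$ to a single antipodal pair not containing $n/2$. Beyond that, we must make sure the divisor produced by \Cref{prop:normalforms}(ii) coincides with $\gcdop(n,K)$, which follows because a subgroup of $\Zn$ is generated by the gcd of its elements with $n$.
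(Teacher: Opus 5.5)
Your proposal is correct and follows essentially the same route as the paper. It isolates the pure half-turn $K=\{n/2\}$; otherwise it applies \Cref{prop:normalforms} to a minimizer $k\neq n/2$ and excludes the genuine-interval branch in the HT case because $\{k,n-k\}$ cannot contain $n/2$. It then identifies the subgroup divisor with $\gcdop(n,K)$ and reads the labels off \Cref{lem:subgroup-arith}.

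One minor point: if (ii) is read simply as $|K|=2$, exclusivity of (i) and (ii) is cleanest via your own case~(i) conclusion that $|K|=M-1$ with $M$ even, which is odd, or via \Cref{lem:K2sym}, rather than by appeal to the SP definition.
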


\begin{proof}
Choose a representative $w\in\mathcal N$, so that $K=\Ktwo(w)$ by \Cref{rem:necklace-word-invariants}.
If $n$ is even and $K=\{n/2\}$, then
\[
K=(n/2)\Zn\setminus\{0\},
\]
so this is the pure HT case.
Otherwise, if the half-turn is present, choose $k\in K\setminus\{n/2\}$; if it is not present, choose any $k\in K$.
Apply \Cref{prop:normalforms} to this $k$.
The half-turn alternative of \Cref{prop:normalforms} is excluded by the choice of $k$.
If $n$ is even and $n/2\in K$, the genuine-interval branch is also impossible: it would give $K=\{k,n-k\}$, and neither element is $n/2$ because $k\ne n/2$.
Thus any non-pure HT necklace is forced into the subgroup-shaped branch below.

If this application of \Cref{prop:normalforms} falls in the genuine-interval branch, then part~(i) gives
\[
K=\{k,n-k\},
\]
which is exactly the SP case.
If the $1$-set on the active $k$-orbit is a singleton or co-singleton, \Cref{prop:normalforms}(ii) gives a divisor $\tau\mid n$ such that
\[
K=\tau\Zn\setminus\{0\}.
\]
Taking gcds with $n$ shows that this divisor is $\gcdop(n,K)$, hence it is unique.
With $M:=n/\tau$, \Cref{lem:subgroup-arith} gives the three possible labels directly:
\[
M\text{ even}\iff n/2\in K,
\qquad
M=3\iff |K|=2,
\qquad
M\text{ odd and }M\ge5\iff |K|>2\text{ and }n/2\notin K.
\]
Thus the subgroup-shaped branch contributes precisely the mixed half-turn cases, the order-$3$ SP boundary case, and the SUB cases.
Together with the pure HT boundary and the genuine-interval SP branch, these alternatives are disjoint and exhaustive.
The residual multi-pair branch in \Cref{def:d2-taxonomy,def:DtwoMP} therefore has no distance-$2$ instances, proving $\DtwoMP(n)=\varnothing$ and the stated partition of $\Dtwo(n)$.
\end{proof}

\begin{remark}[Interpreting the trichotomy]
\label{rem:trichotomy-reading}
The HT class consists of the pure boundary $K=\{n/2\}$ together with the mixed half-turn subgroup-shaped case.  The SP class consists of the genuine-interval branch with a single antipodal pair together with the order-$3$ SP boundary case.  Thus the distance-$2$ collapse leaves no residual MP class: the prescribed-shift normal forms allow only a single antipodal pair or a full subgroup-shaped minimizer set.
\end{remark}

\noindent Consequently, whenever $\mathcal N\in\DtwoHT(n;\tau)$ or $\mathcal N\in\DtwoSUB(n;\tau)$, the preliminary notation above satisfies
\[
\tau=\tautwo(\mathcal N)=\gcdop(n,\Ktwo(\mathcal N)).
\]
In the HT case the quotient $M:=n/\tau$ is even, while in the SUB case $\tau$ is the strict period and $M$ is odd (hence $M\ge 5$).

\begin{remark}[Computational classifier]
The preceding proof gives the case split used in the manuscript: $\DtwoMP(n)=\varnothing$ and the partition is exhaustive.  The supplementary material records a direct finite-profile classifier for examples and the finite check.
\end{remark}

\section{Branch normal forms and defect--background reconstruction}
\label{sec:defect-background}

After the minimizer-set class has been identified, words are reconstructed within each class.  At distance $2$, the only nontrivial normal form beyond the genuine-interval branch is the subgroup-shaped defect--background model: a periodic background with one exceptional point.
For these subgroup-shaped cases, \Cref{prop:defect-quotient,prop:defect-branch-normalization,cor:defect-primitive-readout,thm:defect-primitive-branch-split} recover $\Ktwo(w)$, fix the normalized representative, determine the HT/SP/SUB class, and give the converse necklace-level bijection.
\Cref{cor:defect-counting-dictionary} records the cardinality consequence used in the divisor-sum formulas of \Cref{sec:enum}.

\paragraph{Dependency guide for the subgroup-shaped branch.}
The reconstruction results used in this section feed into the enumeration as follows.
\begin{center}
\small
\begin{tabular}{p{0.26\textwidth}p{0.35\textwidth}p{0.23\textwidth}}
\hline
Result & Role & Enumeration use \\
\hline
\Cref{prop:defect-quotient} & Recovers the single-defect quotient--background model and the subgroup-shaped minimizer set from a minimizing shift. & Set-up for all subgroup-shaped HT/SP/SUB slices. \\
\Cref{prop:defect-branch-normalization} & Fixes the unique normalized representative with exceptional point at index $0$. & Prevents overcounting before primitive readout. \\
\Cref{cor:defect-primitive-readout} & Identifies the primitive quotient--background word and the divisor $\tau$ from the normalized representative. & Parameterizes the mixed HT slices, the order-$3$ SP boundary case, and the SUB slices. \\
\Cref{thm:defect-primitive-branch-split} & Gives the converse necklace-level bijection and separates the $M=3$, even-$M$, and odd-$M$ cases. & Assigns the subgroup-shaped slices to $\DtwoSP$, $\DtwoHT$, and $\DtwoSUB$. \\
\Cref{cor:defect-counting-dictionary} & Converts the bijection into the primitive-word cardinality for each slice. & Supplies the defect--background terms in \Cref{thm:countHT-tau,thm:countSP,thm:countSUB}. \\
\hline
\end{tabular}
\end{center}

The subgroup-shaped part of the distance-$2$ classification is described by a periodic background with one exceptional point.  The following moment identity is the structural test for subgroup-shaped minimizer sets.  At distance $2$, it forces exactly one nonconstant quotient orbit, which is then reconstructed by the defect--background normal form.

\begin{proposition}[Subgroup-shaped moment identity at distance $2$]
\label{prop:subgroup-moment-identity}
Let $[w]\in\Dtwo(n)$, and assume that its minimizer set has the subgroup-shaped form
\[
\Ktwo(w)=\tau\Zn\setminus\{0\}.
\]
Write $n=\tau M$ and decompose $w$ into the $\tau$ orbit words $u^{(r)}\in\B^M$ on the residue classes modulo $\tau$.
If $a_r:=|\{j\in\Zquot{M}:u^{(r)}_j=1\}|$, then
\begin{equation}
\label{eq:subgroup-moment-identity}
\sum_{r=0}^{\tau-1}a_r(M-a_r)=M-1.
\end{equation}
Consequently, exactly one $\tau$-orbit word is nonconstant, and that word has weight $1$ or $M-1$.
\end{proposition}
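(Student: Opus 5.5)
Fix the subgroup $H:=\tau\Zn$ of order $M$ and sum $d_w(\ell)$ over all $\ell\in H$ in two ways.

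\emph{The hypothesis side.} By assumption $d_w(\ell)=2$ for every nonzero $\ell\in H$, and $d_w(0)=0$. Hence
\[
\sum_{\ell\in H}d_w(\ell)=2(M-1).
\]

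\emph{The orbit-word side.} For $\ell=j\tau$, the shift preserves each residue class modulo $\tau$. On class $r$ it acts as the cyclic shift by $j$ of the orbit word $u^{(r)}$, exactly as in the bookkeeping of \Cref{lem:shift-cycle-decomp}. So
\[
d_w(j\tau)=\sum_{r}\Ham\bigl(u^{(r)},\rot^j u^{(r)}\bigr).
\]
Fix $r$ and a position $i\in\Zquot{M}$. As $j$ runs over all of $\Zquot{M}$, the position $i-j$ runs over all of $\Zquot{M}$. The number of $j$ with $u^{(r)}_i\ne u^{(r)}_{i-j}$ is therefore $M-a_r$ if $u^{(r)}_i=1$, and $a_r$ if $u^{(r)}_i=0$. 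Summing over $i$ gives $a_r(M-a_r)+(M-a_r)a_r=2a_r(M-a_r)$. Summing over $r$ and equating with the hypothesis side yields \eqref{eq:subgroup-moment-identity}.

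\emph{The consequence.} A nonconstant word has $1\le a_r\le M-1$. For such $a_r$, the concavity of $a(M-a)$ gives $a_r(M-a_r)\ge M-1$, with equality exactly when $a_r\in\{1,M-1\}$.
\begin{itemize}
\item Two nonconstant orbit words would make the left side at least $2(M-1)>M-1$, since $M\ge2$.
\item No nonconstant word would make the left side $0\ne M-1$.
\item When $M=1$, the subgroup is trivial, $K=\varnothing$, and this contradicts $\minDist=2$.
\end{itemize}
So exactly one orbit word is nonconstant, and its term equals $M-1$, which forces weight $1$ or $M-1$.

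The only step needing care is the double-count identity. The degenerate case $M=1$ is handled in the last bullet above, and there is no substantial obstacle.
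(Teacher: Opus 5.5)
Your proposal is correct and follows the paper's proof essentially step for step. Both arguments sum $d_w(j\tau)$ over the subgroup $\tau\Zn$. Both use the identity $\sum_{j}d_{u}(j)=2a(M-a)$, which you verify by a direct double count where the paper simply cites it as standard. Both then conclude from the fact that $a(M-a)\ge M-1$ on $1\le a\le M-1$, with equality only at $a\in\{1,M-1\}$, together with $M\ge 2$ coming from $\Ktwo(w)\neq\varnothing$.
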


\begin{proof}
Since $[w]\in\Dtwo(n)$, the set $\Ktwo(w)$ is nonempty.  Hence the assumed set $\tau\Zn\setminus\{0\}$ is nonempty, so $M=n/\tau\ge2$.
For $1\le j\le M-1$, the shift $j\tau$ acts within each $\tau$-orbit, so
\[
d_w(j\tau)=\sum_{r=0}^{\tau-1}d_{u^{(r)}}(j).
\]
Since every nonzero multiple of $\tau$ is a minimizer, the left side is $2$ for each $j=1,\dots,M-1$.
Summing over these $j$ gives
\[
2(M-1)=\sum_{r=0}^{\tau-1}\sum_{j=1}^{M-1}d_{u^{(r)}}(j).
\]
For a binary word $u$ of length $M$ and weight $a$, the standard symmetric-difference count gives
\[
\sum_{j=0}^{M-1}d_u(j)=2a(M-a),
\]
and the $j=0$ term is zero.
Applying this identity to each $u^{(r)}$ and dividing by $2$ gives \eqref{eq:subgroup-moment-identity}.
The least positive value of $a(M-a)$ is $M-1$, attained only at $a=1$ or $a=M-1$; hence the displayed sum has exactly one nonzero summand, and that summand has the asserted weight.
\end{proof}

\Cref{prop:subgroup-moment-identity} explains why the condition $\Ktwo(w)=\tau\Zn\setminus\{0\}$ leaves one nonconstant quotient orbit.  The necklace-level bijection and counts below use \Cref{prop:defect-quotient,prop:defect-branch-normalization,cor:defect-primitive-readout,thm:defect-primitive-branch-split,cor:defect-counting-dictionary}.

The final distance-$2$ enumeration uses five disjoint counting pieces refining the three nonempty classes:
\begin{enumerate}[label=(\roman*),leftmargin=2.5em]
\item pure HT: $M=2$; parameter $\tau=n/2$; counted in \Cref{thm:countHT,cor:countHT-pure};
\item order-$3$ SP boundary: $M=3$; primitive quotient--background words of length $\tau=n/3$; counted in \Cref{thm:countSP};
\item mixed half-turn boundary cases: $M=n/\tau$ even, $M\ge 4$; primitive quotient--background words of length $\tau$; counted in \Cref{thm:countHT-tau};
\item generic genuine-interval SP: order $m=\operatorname{ord}_{\Zn}(k)\ge 4$ and interval data $(b,c_1,\dots,c_{g-1})$; interval branch \Cref{prop:SP-interval}; counted in \Cref{thm:countSP};
\item odd-quotient SUB: $M=n/\tau$ odd, $M\ge 5$; primitive quotient--background words of length $\tau$; counted in \Cref{thm:countSUB}.
\end{enumerate}
These five pieces are disjoint and exhaustive for $\Dtwo(n)$ and retain the HT/SP/SUB class assignment.
The rest of this section records the subgroup-shaped branch in three steps: uniqueness of the normalized representative, primitive determination of the subgroup data, and the converse bijection.
The actual cardinality formulas are collected in \Cref{sec:enum}.

\begin{proposition}[Unique normalized representative for the shift $k=\tau$]
\label{prop:defect-branch-normalization}
Let $\tau\mid n$ and write $M:=n/\tau$.
Assume that either $M=3$, or $M$ is even and $M\ge 4$, or $M$ is odd and $M\ge 5$.
Let $x\in\B^n$ satisfy $\minDist(x)=2$ and $d_x(\tau)=2$.
For the shift $k=\tau$, let $\mathcal O$ be the unique active $\tau$-orbit from \Cref{lem:active,lem:block}.
Assume that the $1$-set on $\mathcal O$ is a cyclic interval of length $1$ or $M-1$.
In the notation of \Cref{prop:defect-quotient} for the shift $k=\tau$,
\[
g=\gcdop(n,k)=\tau,
\qquad
m=n/g=M.
\]
Rotate $x$ so that the unique exceptional point on the active $\tau$-orbit lies at index $0$, and call the resulting representative $w$.
The representative $w$ is in the single-defect normal form of \Cref{prop:defect-quotient} relative to $k=\tau$.
Hence there is a quotient--background word $c=(c_0,\dots,c_{\tau-1})\in\B^\tau$ such that
\[
w_0=1-c_0,\qquad w_{t\tau}=c_0\ \ (1\le t\le M-1),\qquad w_{r+t\tau}=c_r\ \ (r\ne 0).
\]
The displayed formulas define the word $w(c)$, and in this situation $w(c)=w$.
For the fixed minimizing shift $k=\tau$ and defect position $0$, the normalized representative $w$ is unique.
Consequently, the associated quotient--background word $c$ is unique as well.
\end{proposition}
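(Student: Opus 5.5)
The plan is to specialize \Cref{prop:defect-quotient} to the shift $k=\tau$ and then show that the rotation placing the defect at $0$ is forced. First I check the hypotheses of \Cref{prop:defect-quotient}. Since $\tau\mid n$ and $k=\tau$, we have $g=\gcdop(n,\tau)=\tau$ and $m=n/\tau=M$. In each of the three allowed cases ($M=3$, $M$ even with $M\ge4$, or $M$ odd with $M\ge5$) we get $m\ge3$, as required. By assumption $d_x(\tau)=2$, and the active interval length is $b\in\{1,M-1\}$. Hence \Cref{prop:defect-quotient} applies: rotating $x$ so that the exceptional point of the active orbit sits at index $0$ gives a word $w$ in single-defect normal form relative to $k=\tau$, with quotient--background word $c\in\B^\tau$.

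Next I verify $w(c)=w$ explicitly, using the residue-class description. The $\tau$-orbits are the residue classes modulo $\tau$, and the active one is $\tau\Zn=\{t\tau\}$ because the defect is at $0$. Set $c_0$ to be the common value of $w$ on $\{t\tau:1\le t\le M-1\}$; this is well defined by \Cref{lem:block}, since $b\in\{1,M-1\}$ means exactly one point of the orbit differs from the rest. For $r\ne0$, set $c_r$ to be the constant value of $w$ on the orbit $r+\tau\Zn$, which is constant by \Cref{lem:block}. Then $w_0=1-c_0$ by the definition of the exceptional point. The three displayed formulas now hold for $w$, so $w(c)=w$.

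Uniqueness is the main point, and I would argue it directly. The exceptional point on the active orbit is unique, because a weight-$1$ or co-weight-$1$ pattern on a cycle of length $M\ge3$ has exactly one minority point. So in the rotation class of $x$, the representative is pinned down by requiring that this particular point land at $0$. Two subtleties need handling.

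\begin{itemize}
\item Rotation by $s$ may change which residue class is active. I fix the convention that the rotation sends the exceptional point to $0$, so the active orbit becomes $\tau\Zn$. Any two such rotations differ by a rotation $s$ that fixes the location $0$ of the exceptional point. Such an $s$ must be $0$, since the exceptional point of $\rot^s w$ is at $s$.
\item Even so, one might worry that a different representative could be in normal form relative to $\tau$ with its defect at $0$. This is the step I expect to need care. Suppose $w'=\rot^s w$ is also in normal form with its defect at $0$. The active $\tau$-orbit of $w'$ is $\tau\Zn+s$, because active orbits are carried by rotation. For it to equal $\tau\Zn$ we need $s\in\tau\Zn$, and then the exceptional point of $w'$ is at $s$. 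So $s=0$. Alternatively, $w$ is primitive by \Cref{lem:normalized-primitive}(c), so distinct $s$ give distinct words and no coincidence can occur.
\end{itemize}

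Finally, $c$ is unique because it is read off from $w$: $c_0=w_\tau$ and $c_r=w_r$ for $r\ne0$.
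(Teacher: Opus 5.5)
Your proof is correct and follows the paper's argument. Existence is \Cref{prop:defect-quotient} with $k=\tau$; your check that $m=M\ge3$, and your remark that $M\ge3$ makes the exceptional point unique, are details the paper leaves implicit. Uniqueness uses the same two cases: $s\notin\tau\Zn$ moves the active orbit, and $s\in\tau\Zn\setminus\{0\}$ moves the defect to $s$. You should drop the ``alternatively'' sentence about primitivity. Knowing that distinct $s$ give distinct words does not rule out a second rotation also being in normal form, so it is not an alternative proof. Your orbit-and-defect argument already settles uniqueness.
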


\begin{proof}
Existence of the displayed normal form is exactly \Cref{prop:defect-quotient} with the chosen minimizing shift $k=\tau$. For uniqueness, let $w'$ be another representative of the same necklace in the same normal form. Then $w'=\rot^s w$ for some $s\in\Zn$.

If $s\notin\tau\Zn$, the unique active $\tau$-orbit of $w$ is carried to a different $\tau$-orbit, contradicting the normalization that places that orbit through $0$. If $s\in\tau\Zn\setminus\{0\}$, the active orbit is preserved but the unique defect moves away from index $0$, again contradicting the normalization. Hence $s=0$, so $w'=w$, and the defining formulas force the same quotient--background word.
\end{proof}

\begin{corollary}[Primitive determination of the subgroup-shaped branch]
\label{cor:defect-primitive-readout}
Let $\tau\mid n$ and write $M:=n/\tau$.
Assume that either $M=3$, or $M$ is even and $M\ge 4$, or $M$ is odd and $M\ge 5$.
Let $c=(c_0,\dots,c_{\tau-1})\in\B^\tau$.
Let $w=w(c)$ be the word defined by the single-defect formulas in \Cref{prop:defect-branch-normalization} relative to $k=\tau$.
Write
\[
P:=\{e\in\Zquot{\tau}:\rot^e c=c\}=\rho\Zquot{\tau},
\]
for the period subgroup of the quotient--background word. Then:
\begin{enumerate}[label=(\alph*),leftmargin=2.5em]
\item
\[
\Ktwo(w)=\rho\Zn\setminus\{0\},
\]
so
\[
\Ktwo(w)=\tau\Zn\setminus\{0\}
\quad\Longleftrightarrow\quad
P=\{0\}
\quad\Longleftrightarrow\quad
c\text{ is primitive};
\]
\item the bit $c_0$ distinguishes the singleton and co-singleton $1$-set forms on the active orbit:
\[
c_0=0 \iff w_0=1 \text{ and } w_{t\tau}=0\ \ (1\le t\le M-1),
\]
so the $1$-set on the active $\tau$-orbit is a singleton, while
\[
c_0=1 \iff w_0=0 \text{ and } w_{t\tau}=1\ \ (1\le t\le M-1),
\]
so the $1$-set on the active $\tau$-orbit is a co-singleton;
\item if $c$ is primitive, then the necklace class $[w]$ lies in $\Dtwo(n)$, $\tau$ is the least positive minimizer in $\Ktwo([w])$, and the quotient-order split is
\[
\begin{aligned}
M=3 &\Longleftrightarrow [w]\in\DtwoSP(n;3),\\
M\ \text{is even and}\ M\ge 4 &\Longleftrightarrow [w]\in\DtwoHT(n;\tau),\\
M\ \text{is odd and}\ M\ge 5 &\Longleftrightarrow [w]\in\DtwoSUB(n;\tau)
\end{aligned}.
\]
In the odd-quotient case, the strict period is $\tau$.
\end{enumerate}
\end{corollary}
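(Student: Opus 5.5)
The plan is to reduce almost everything to \Cref{prop:defect-quotient}, applied with $k=\tau$, so that $g=\tau$ and $m=M\ge3$.

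\emph{Setting up the normal form.} First observe that the word $w=w(c)$ is exactly the single-defect normal form of that proposition. Its active $\tau$-orbit $\tau\Zn$ carries the singleton or co-singleton pattern, and every other $\tau$-orbit is constant with value $c_r$. Hence $d_w(\tau)=2$ directly, since the active orbit contributes two boundary points and the others contribute none. This means no a priori assumption $\minDist(w)=2$ is needed.

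\emph{Part (a).} Part~(a) of \Cref{prop:defect-quotient} gives $\Ktwo(w)=\rho\Zn\setminus\{0\}$ for $P=\rho\Zquot{\tau}$. The proof there only used $d_x(k)=2$ and $m\ge3$, not the global minimum. Consequently $\Ktwo(w)=\tau\Zn\setminus\{0\}$ if and only if $\rho=\tau$. Since $\rho\mid\tau$ and $\rho\Zn\supseteq\tau\Zn$, with equality exactly when $\rho=\tau$, this holds if and only if $P=\{0\}$, that is, if and only if $c$ is primitive.

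\emph{Part (b).} This is a verbatim restatement of \Cref{prop:defect-quotient}(d), read off from the defining formulas $w_0=1-c_0$ and $w_{t\tau}=c_0$.

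\emph{Part (c).} Assume $c$ is primitive. We must show $\minDist(w)=2$ and not just $\Ktwo(w)\neq\varnothing$.
\begin{itemize}
\item[] \emph{Minimum value.} By \Cref{lem:normalized-primitive}(c), $w$ is primitive, so $d_w(\ell)\ge2$ for all $\ell\ne0$ by \Cref{lem:positive-minimum-primitive} and \Cref{lem:dw-even}. Together with $\tau\in\Ktwo(w)$, this gives $\minDist(w)=2$ and $[w]\in\Dtwo(n)$.
\item[] \emph{Least minimizer.} The least positive element of $\tau\Zn\setminus\{0\}$ is $\tau$.
\item[] \emph{Quotient-order split.} Apply \Cref{lem:subgroup-arith} to $K=\tau\Zn\setminus\{0\}$. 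If $M$ is even, then $n/2\in K$ and $[w]\in\DtwoHT(n)$; moreover $K=\tau\Zn\setminus\{0\}$, so $[w]\in\DtwoHT(n;\tau)$. If $M=3$, then $n/2\notin K$ and $|K|=2$, so $K=\{\tau,2\tau\}=\{\tau,n-\tau\}$ with $\tau<n/2$ of order $3$, placing $[w]$ in $\DtwoSP(n;3)$. If $M$ is odd and at least $5$, then HT and SP both fail and $K$ is subgroup-shaped, so $[w]\in\DtwoSUB(n;\tau)$ with strict period $\tau$.
\item[] \emph{Converse directions.} The three hypotheses on $M$ are mutually exclusive, and the three target slices are disjoint by the priority convention. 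Therefore each forward implication yields the corresponding equivalence.
\end{itemize}

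\emph{Main obstacle.} The only genuinely delicate point is that \Cref{prop:defect-quotient} was stated for an $x$ with $d_x(k)=2$. This is why I would begin by verifying $d_{w(c)}(\tau)=2$ directly, and why part~(c) needs the primitivity lemma to upgrade to $\minDist=2$.
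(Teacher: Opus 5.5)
Your proposal is correct and follows essentially the same route as the paper: check $d_{w(c)}(\tau)=2$ by construction, read off parts (a) and (b) from \Cref{prop:defect-quotient} with $k=\tau$, and then prove (c) using \Cref{lem:normalized-primitive}(c), parity, and \Cref{lem:subgroup-arith}. One citation slip: the formula $\Ktwo(w)=\rho\Zn\setminus\{0\}$ is part~(c) of \Cref{prop:defect-quotient}, not part~(a), which is the identity $d_v(\ell)=m\,d_c(e)$.
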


\begin{proof}
By construction, $d_w(\tau)=2$, and the active $\tau$-orbit has interval length $1$ or $M-1$.
All other $\tau$-orbits are constant.
Thus \Cref{prop:defect-quotient} applies with $k=\tau$ and $g=\tau$.
Parts~(a) and~(b) are exactly \Cref{prop:defect-quotient}(c,d) in this notation.
The displayed equivalence in part~(a) uses that a length-$\tau$ word is primitive exactly when its period subgroup in $\Zquot{\tau}$ is trivial.

Assume now that $c$ is primitive.
Then part~(a) gives $\Ktwo(w)=\tau\Zn\setminus\{0\}$; in particular, $d_w(\tau)=2$.
By \Cref{lem:normalized-primitive}(c), the normalized representative $w$ is primitive.
Since all nontrivial rotational Hamming distances are even by \Cref{lem:dw-even}, we obtain $\minDist(w)=2$, so the necklace of $w$ lies in $\Dtwo(n)$.
The subgroup formula also makes $\tau$ the least positive minimizer.
The quotient-order alternatives then follow from \Cref{lem:subgroup-arith}.
The alternatives $M=3$, even $M\ge 4$, and odd $M\ge 5$ correspond respectively to the order-$3$ SP boundary case, the mixed half-turn case, and the SUB case.
This is precisely the trichotomy of \Cref{thm:trichotomy}; in the SUB case the strict period is $\tau$ by definition.
\end{proof}

\begin{remark}[Primitivity as a canonical divisor condition]
\label{rem:quotient-primitivity-canonical}
The primitivity condition on the quotient--background word is a canonical-divisor condition.  In the notation of \Cref{cor:defect-primitive-readout}, if the quotient--background word has a nontrivial period subgroup $P=\rho\Zquot{\tau}$ with $\rho<\tau$, then the same single-defect representative has
\[
\Ktwo(w)=\rho\Zn\setminus\{0\}.
\]
Thus a nonprimitive quotient--background word does not define a new slice indexed by $\tau$; it belongs to the subgroup-shaped slice indexed by the smaller divisor $\rho$.  Requiring $c$ to be primitive, equivalently $P=\{0\}$, is therefore exactly what makes the divisor $\tau$ canonical and prevents overlap among the subgroup-shaped slices.
\end{remark}

\begin{theorem}[Converse reconstruction and bijection for subgroup-shaped slices]
\label{thm:defect-primitive-branch-split}
Let $\tau\mid n$ and write $M:=n/\tau$.
Assume that either $M=3$, or $M$ is even and $M\ge 4$, or $M$ is odd and $M\ge 5$.
Then:
\begin{enumerate}[label=(\alph*),leftmargin=2.5em]
\item if $\mathcal N$ is a necklace class with $\Ktwo(\mathcal N)=\tau\Zn\setminus\{0\}$, then $\mathcal N$ lies in the subgroup-shaped defect--background branch of \Cref{prop:normalforms}.
Moreover, there is a unique normalized representative $w=w(c)\in\mathcal N$ in the single-defect normal form of \Cref{prop:defect-quotient} relative to $k=\tau$.
The associated quotient--background word $c\in\B^\tau$ is unique and primitive;
\item the assignment is
\[
c\longmapsto [w(c)].
\]
This assignment is a bijection from primitive quotient--background words of length $\tau$ onto the following subgroup-shaped slice of order $M$:
\[
\begin{cases}
\DtwoSP(n;3), & M=3,\\
\DtwoHT(n;\tau), & M\ \text{is even and}\ M\ge 4,\\
\DtwoSUB(n;\tau), & M\ \text{is odd and}\ M\ge 5
\end{cases}.
\]
\end{enumerate}
After the chosen minimizing shift $k=\tau$ and the defect position $0$ are fixed, necklace equivalence imposes no additional quotient in these bijective cases.
\end{theorem}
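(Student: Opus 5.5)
For part~(a), I start from a necklace $\mathcal N$ with $\Ktwo(\mathcal N)=\tau\Zn\setminus\{0\}$ and a representative $x\in\mathcal N$. Since $M\ge3$ under each of the three hypotheses, $\tau$ is a nonzero minimizer, so $d_x(\tau)=2$. The shift $k=\tau$ has $g=\gcdop(n,\tau)=\tau$ and order $m=M$. I then need the active $\tau$-orbit to have interval length $1$ or $M-1$, and I see two routes. The first is \Cref{prop:subgroup-moment-identity}: it says exactly one $\tau$-orbit word is nonconstant, and that word has weight $1$ or $M-1$. The second is \Cref{prop:interval-shift-criterion}(d), since every $j\tau$ with $1\le j\le M-1$ has distance $2$. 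Either way, $\mathcal N$ falls in the single-defect branch of \Cref{prop:normalforms}(ii). Next, \Cref{prop:defect-branch-normalization} gives a normalized representative $w=w(c)$ with exceptional point at $0$, and shows that $w$ and $c$ are unique. Finally, \Cref{cor:defect-primitive-readout}(a) gives $\Ktwo(w)=\rho\Zn\setminus\{0\}$ with $P=\rho\Zquot{\tau}$. Comparing this with $\tau\Zn\setminus\{0\}$ and taking $\gcdop$ with $n$ forces $\rho=\tau$, so $P=\{0\}$ and $c$ is primitive.

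For part~(b), \Cref{cor:defect-primitive-readout}(c) shows that the map $c\mapsto[w(c)]$ is well defined into the stated slice for each primitive $c$. That corollary places $[w(c)]$ in $\Dtwo(n)$ with $\Ktwo=\tau\Zn\setminus\{0\}$, and its quotient-order split sends the three cases to $\DtwoSP(n;3)$, $\DtwoHT(n;\tau)$, and $\DtwoSUB(n;\tau)$ respectively.

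For surjectivity, I take $\mathcal N$ in the target slice and check that $\Ktwo(\mathcal N)=\tau\Zn\setminus\{0\}$, so that part~(a) applies. For $\DtwoHT(n;\tau)$ and $\DtwoSUB(n;\tau)$ this holds by definition. For $\DtwoSP(n;3)$ it needs an argument, because the definition only says $\Ktwo=\{k,n-k\}$ with $\operatorname{ord}(k)=3$. Since $\operatorname{ord}(k)=3$, the element $k$ generates $(n/3)\Zn$, so $\{k,n-k\}=(n/3)\Zn\setminus\{0\}=\tau\Zn\setminus\{0\}$ with $\tau=n/3$.

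For injectivity, suppose $[w(c)]=[w(c')]$. Then $w(c)$ and $w(c')$ are both normalized representatives of one necklace, and the uniqueness in \Cref{prop:defect-branch-normalization} gives $w(c)=w(c')$. The bit $c_0$ is recovered from $w_0$ via \Cref{cor:defect-primitive-readout}(b), and each $c_r$ with $r\neq0$ is read off as $w_r$. Hence $c=c'$.

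The main obstacle is the uniqueness step for the normalized representative. The argument in \Cref{prop:defect-branch-normalization} that a rotation $s\notin\tau\Zn$ moves the active orbit to a different orbit requires the active orbit to be the unique nonconstant $\tau$-orbit. This holds because the other orbits are constant by \Cref{lem:block}. I would also verify the edge case where the background makes some other orbit look "active". This cannot happen, because all other orbits are constant and rotation preserves constancy. The final sentence of the statement, that no additional quotient arises, is then a restatement of this injectivity.
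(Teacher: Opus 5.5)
Your argument is essentially the paper's proof. There are two small differences. You obtain $b\in\{1,M-1\}$ directly from \Cref{prop:interval-shift-criterion}(d), whereas the paper excludes the genuine-interval branch of \Cref{prop:normalforms}: when $M=3$ there is no room for $2\le b\le M-2$, and when $M\ge4$ we have $|\tau\Zn\setminus\{0\}|=M-1\ge3>2$. You also make explicit, which the paper leaves implicit, that members of $\DtwoSP(n;3)$ satisfy $\Ktwo=(n/3)\Zn\setminus\{0\}$.

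You should add one step, which the paper performs first. The hypothesis $\Ktwo(\mathcal N)=\tau\Zn\setminus\{0\}$ does not by itself give $\minDist(\mathcal N)=2$, yet \Cref{prop:normalforms}, \Cref{prop:defect-branch-normalization} and \Cref{prop:subgroup-moment-identity} all assume it. So note that $d_x(\tau)=2$ makes $x$ primitive by the last assertion of \Cref{lem:positive-minimum-primitive}, and then $\minDist(x)=2$ follows from \Cref{lem:dw-even}.
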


\begin{proof}
For part~(a), choose any representative $x$ of $\mathcal N$.
By \Cref{rem:necklace-word-invariants}, $\Ktwo(x)=\tau\Zn\setminus\{0\}$.
In particular, $d_x(\tau)=2$.
We first verify that $x$ is in the distance-$2$ regime.
Applying \Cref{lem:active,lem:block} to the prescribed shift $\tau$ gives a unique active $\tau$-orbit, whose $1$-set is a proper cyclic interval $I\subseteq\Zquot{M}$.
Every other $\tau$-orbit is constant.
Suppose that $\rot^s x=x$ for some nonzero $s\in\Zn$.
Then translation by $s$ must carry the unique active $\tau$-orbit to itself.
Hence $s=j\tau$ with $1\le j\le M-1$.
The equality $\rot^s x=x$ would force $I+j=I$.
However, the interval-shift formula in \Cref{prop:interval-shift-criterion} gives $|I\triangle(I+j)|>0$ for every proper cyclic interval and every nonzero $j$.
This contradiction shows that $x$ is primitive and has no nontrivial zero-distance rotation.
Since $d_x(\tau)=2$ and all nontrivial rotational Hamming distances are even by \Cref{lem:dw-even}, we have $\minDist(x)=2$.
Therefore, \Cref{prop:normalforms} applies with the minimizing shift $k=\tau$.

Because $M\ge 3$, the pure half-turn alternative is excluded.
If $M=3$, the genuine-interval branch is impossible because it would require an interval length $b$ with $2\le b\le M-2=1$.
If $M\ge 4$, the genuine-interval branch would force $\Ktwo(x)=\{\tau,n-\tau\}$, but $\tau\Zn\setminus\{0\}$ has size $M-1\ge 3$.
Hence only the subgroup-shaped alternative remains.
On the unique active $\tau$-orbit, \Cref{lem:block} therefore gives a $1$-set that is a cyclic interval of length $b\in\{1,M-1\}$.
Rotate $x$ so that the unique exceptional point on that active orbit lies at index $0$, and call the rotated word $w$.
Let $c_0$ be the common value of the remaining points $t\tau$ on that orbit for $1\le t\le M-1$.
Equivalently, $c_0=0$ when the $1$-set on the active orbit is a singleton, and $c_0=1$ when it is a co-singleton.
For each other $\tau$-orbit, let $c_r$ be its constant value. Then
\[
w_0=1-c_0,\qquad w_{t\tau}=c_0\ \ (1\le t\le M-1),\qquad w_{r+t\tau}=c_r\ \ (r\ne 0).
\]
Thus $w$ is in the single-defect normal form of \Cref{prop:defect-quotient} relative to $k=\tau$.
The uniqueness of $w$ and $c$ is exactly \Cref{prop:defect-branch-normalization}.
By \Cref{cor:defect-primitive-readout}(a), $c$ is primitive because $\Ktwo(w)=\tau\Zn\setminus\{0\}$.

For part~(b), let $c$ be a primitive quotient--background word of length $\tau$.
Define $w(c)$ by the defining formulas in \Cref{prop:defect-branch-normalization}.
By construction, $w(c)$ is in the single-defect normal form relative to $k=\tau$.
Therefore, \Cref{cor:defect-primitive-readout}(c) places $[w(c)]$ in the stated order-$M$ slice.
The uniqueness statement in \Cref{prop:defect-branch-normalization} gives the injectivity of $c\mapsto [w(c)]$.
Surjectivity follows from part~(a), which reconstructs the unique normalized representative from any necklace in that subgroup-shaped slice.
\end{proof}

\begin{remark}[Why primitive words are counted here]
\label{rem:defect-words-not-necklaces}
The bijection in \Cref{thm:defect-primitive-branch-split} is formulated for primitive quotient--background words, not for primitive quotient--background necklaces. Once the chosen minimizing shift $k=\tau$ is fixed and the exceptional point on the unique active $\tau$-orbit is placed at index $0$, rotations do not give an additional quotient. A rotation that changes the active orbit or moves the exceptional point violates the normalization; a rotation that preserves both is trivial by \Cref{prop:defect-branch-normalization}. Thus two primitive words $c,c'\in\B^\tau$ with $[w(c)]=[w(c')]$ are already equal as words.

The edge case $\tau=1$ is included. The primitive length-$1$ words $0$ and $1$ give the singleton and co-singleton normalized representatives. Because complements are not identified in binary necklace equivalence, they remain two contributions. The divisor sums below therefore use the primitive-word count
\[
\PrimCt{\tau}=\sum_{d\mid\tau}\mu(d)2^{\tau/d},
\]
directly, with no division by $\tau$ or by $2$.
\end{remark}

The next corollary records only the counting consequence of \Cref{cor:defect-primitive-readout} and \Cref{thm:defect-primitive-branch-split}. The normalization in \Cref{rem:defect-words-not-necklaces} explains why the primitive quotient--background words are counted directly.

\begin{corollary}[Counting consequence for the subgroup-shaped defect branch]
\label{cor:defect-counting-dictionary}
Let $\tau\mid n$ and write $M:=n/\tau$.
\begin{enumerate}[label=(\alph*),leftmargin=2.5em]
\item If $M=2$, then the subgroup-shaped normal-form boundary consists exactly of the pure half-turn classes with minimizer set $\{n/2\}$;
\item If $M=3$, or if $M$ is even and $M\ge 4$, or if $M$ is odd and $M\ge 5$, then the necklace-level bijection of \Cref{thm:defect-primitive-branch-split}(b) identifies the corresponding order-$M$ subgroup-shaped slice with the primitive quotient--background words of length $\tau$.  Hence that slice has cardinality
\[
\PrimCt{\tau}=\sum_{d\mid\tau}\mu(d)2^{\tau/d}.
\]
Equivalently, this count applies to the order-$3$ SP boundary case when $M=3$, the mixed half-turn slice when $M$ is even and $M\ge 4$, and the SUB slice when $M$ is odd and $M\ge 5$.
\item In the cases covered by part~(b), $\tau$ is the least positive minimizer and, when $M$ is odd and $M\ge 5$, the strict period.
\end{enumerate}
\end{corollary}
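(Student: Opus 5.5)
The proof is a readout of \Cref{thm:defect-primitive-branch-split}, with part~(a) handled separately. I will treat the three parts in order.

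For part~(a), take $M=2$, so $\tau=n/2$ and $\tau\Zn\setminus\{0\}=\{n/2\}$. The subgroup-shaped slice of order $2$ then consists of the necklaces $\mathcal N\in\Dtwo(n)$ with $\Ktwo(\mathcal N)=\{n/2\}$. These are exactly the pure half-turn classes, i.e.\ $\DtwoHT(n;n/2)$. Both inclusions are immediate from \Cref{def:DtwoHT} and the refined slice definition. By \Cref{lem:subgroup-arith}(b), $n/2\in K$, so such necklaces are assigned to HT by priority. I will also note why this case is excluded from part~(b): with $m=2$ the active orbit has length $2$, so $b=1=m-1$. The singleton and co-singleton forms then coincide up to rotation, and \Cref{prop:defect-quotient} (which requires $m\ge3$) does not apply. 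That is why the bijection with primitive words is not asserted for $M=2$; the count there is handled separately in \Cref{sec:enum}.

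For part~(b), I invoke \Cref{thm:defect-primitive-branch-split}(b) directly. For each admissible $M$, the map $c\mapsto[w(c)]$ is a bijection from the primitive words $c\in\B^\tau$ onto the order-$M$ slice, which is $\DtwoSP(n;3)$, $\DtwoHT(n;\tau)$, or $\DtwoSUB(n;\tau)$ according as $M=3$, $M$ even, or $M$ odd. Hence the slice has cardinality equal to the number of primitive binary words of length $\tau$. By \eqref{eq:prim-words} this number is $\PrimCt{\tau}=\sum_{d\mid\tau}\mu(d)2^{\tau/d}$. \Cref{rem:defect-words-not-necklaces} justifies counting words rather than necklaces, with no division by $\tau$ or by $2$, including at $\tau=1$. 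The equivalent restatement by case is just the case list in the theorem.

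For part~(c), I take any necklace in such a slice. By surjectivity it equals $[w(c)]$ for a primitive $c$. \Cref{cor:defect-primitive-readout}(c) then says that $\tau$ is the least positive minimizer. When $M$ is odd and $M\ge5$, the slice is $\DtwoSUB(n;\tau)$, and $\tau$ is the strict period by \Cref{def:DtwoSUB}. Equivalently, every element of $\tau\Zn\setminus\{0\}$ is a positive multiple of $\tau$, so $\tau$ is the least one.

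The only delicate point is part~(a): it must be read as an identification of sets, not as a primitive-word count. I would state it explicitly as $\DtwoHT(n;n/2)=\{\mathcal N\in\Dtwo(n):\Ktwo(\mathcal N)=\{n/2\}\}$, which follows from the definitions.
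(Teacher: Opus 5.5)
Your proposal is correct and follows the paper's proof essentially step for step. Part~(a) is the observation that $\tau\Zn\setminus\{0\}=\{n/2\}$ when $M=2$, part~(b) is read off from \Cref{thm:defect-primitive-branch-split}(b) together with \Cref{rem:defect-words-not-necklaces} and \eqref{eq:prim-words}, and part~(c) is \Cref{cor:defect-primitive-readout}(c). Your extra explanation of why $M=2$ is excluded from the primitive-word bijection is accurate and consistent with the separate count in \Cref{cor:countHT-pure}: the singleton and co-singleton forms coincide up to the half-turn, and \Cref{prop:defect-quotient} needs $m\ge 3$.
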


\begin{proof}
Part~(a) is the remaining boundary case.  If $M=2$, then $\tau=n/2$ and the nonzero part $\tau\Zn\setminus\{0\}$ of the subgroup $\tau\Zn$ is exactly $\{n/2\}$.

For part~(b), the bijection is \Cref{thm:defect-primitive-branch-split}(b).  The cardinality assertion follows from \Cref{rem:defect-words-not-necklaces} and the primitive-word count \eqref{eq:prim-words}.

Part~(c) is the least-minimizer and odd-quotient strict-period statement from \Cref{cor:defect-primitive-readout}(c).
\end{proof}

\section{Enumeration}
\label{sec:enum}

This section turns the recovered normal-form branches into formulas.
It counts the HT, SP, and SUB classes identified above; the MP class is empty by \Cref{thm:trichotomy}.
It also keeps the prescribed-shift comparison separate from the exact-minimum class counts, the primitive-word normalization, and the final necklace division via \Cref{lem:positive-minimum-primitive}.
The pure HT boundary $M=2$ is treated in \Cref{thm:countHT,cor:countHT-pure}.
The subgroup-shaped slices are counted through the defect--background dictionary of \Cref{cor:defect-counting-dictionary}, and the SP pieces from the genuine-interval branch are counted in \Cref{prop:SP-interval}.
These ingredients give the class formulas and the aggregate total in \Cref{cor:countD2}.
They also give the two equivalent forms of the main theorem: the expanded M\"obius--totient aggregate and the compact $\PrimCt{\cdot}$ form.
The refined HT slices remain in the main text to distinguish the mixed half-turn subgroup-shaped contribution from the pure half-turn boundary.
Section S5.1 of the supplementary material gives a small-length finite check of the same formulas by direct orbit enumeration, including the deterministic scan specification, sample records, field specification, aggregate count table, and aggregate output summary.

In particular, throughout this section a primitive quotient--background word of length $\tau$ means one whose minimal period, in the sense fixed before \eqref{eq:prim-words}, is exactly $\tau$.
The normalization in \Cref{rem:defect-words-not-necklaces} explains why these primitive quotient--background words are counted as words, not as necklace or complement classes, once the chosen minimizing shift and defect position have been fixed.
The indicator convention stated in the introduction is in force throughout this enumeration section: an expression such as $\One_{3\mid n}\sum_{d\mid(n/3)}\mu(d)2^{n/(3d)}$ is read casewise, so the inner divisor sum is evaluated only when $3\mid n$ and the whole term is otherwise zero.

\subsection{Counting \texorpdfstring{$\DtwoHT$}{distance-2 HT} (half-turn class)}

\begin{theorem}[HT summand in \Cref{thm:main}]
\label{thm:countHT}
For $n\ge 2$,
\[
|\DtwoHT(n)| = \One_{2\mid n}\,2^{\frac{n}{2}-1}.
\]
\end{theorem}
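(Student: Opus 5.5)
The plan is to count at the word level and then divide by $n$, using \Cref{lem:positive-minimum-primitive}. For odd $n$ the class $\DtwoHT(n)$ is empty by \Cref{def:DtwoHT}, which matches the indicator. So assume $n=2h$ is even. By \Cref{def:d2-taxonomy}, a necklace lies in $\DtwoHT(n)$ exactly when $\minDist=2$ and $h\in\Ktwo$, and HT is tested first in the priority order. Hence I will count the word set
\[
X:=\{w\in\B^n:\minDist(w)=2,\ d_w(h)=2\}.
\]
This set is rotation-invariant because $d_w$ is rotation-invariant.

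\textbf{Step 1: drop the exact-minimum condition.} I first show that $X=\{w\in\B^n: d_w(h)=2\}$. Let $w$ satisfy $d_w(h)=2$. The final assertion of \Cref{lem:positive-minimum-primitive} says $w$ is primitive, so $\minDist(w)>0$. By \Cref{lem:dw-even}, $\minDist(w)\ge 2$. Since $d_w(h)=2$, we get $\minDist(w)=2$. Thus only the prescribed half-turn condition needs to be counted, and no competing shift can spoil the minimum.

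\textbf{Step 2: count the words.} Apply \Cref{lem:shift-cycle-decomp} and \Cref{lem:active,lem:block} with $k=h$, so $g=h$ and $m=2$. The $h$-orbits are the $h$ antipodal pairs $\{j,j+h\}$. The condition $d_w(h)=2$ holds exactly when one pair is unequal and every other pair is constant. A word in $X$ is therefore determined by three independent choices:
\begin{itemize}
\item the active pair, in $h$ ways;
\item the orientation of that pair, $(w_j,w_{j+h})\in\{(1,0),(0,1)\}$, in $2$ ways;
\item a constant value on each of the other $h-1$ pairs, in $2^{h-1}$ ways.
\end{itemize}
This gives $|X|=h\cdot 2\cdot 2^{h-1}=n\,2^{h-1}$.

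\textbf{Step 3: pass to necklaces.} Every word in $X$ has $\minDist>0$, so \Cref{lem:positive-minimum-primitive} gives $|\DtwoHT(n)|=|X|/n=2^{n/2-1}$.

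As a sanity check, $n=2$ gives $1$, namely the single necklace $[01]$. The same count could also be assembled from the pure slice $\DtwoHT(n;n/2)$ plus the mixed slices of \Cref{cor:defect-counting-dictionary}, but the direct orbit count avoids that bookkeeping.

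The step I expect to need the most care is Step 1. It could look as though requiring $\minDist=2$ forces some exclusion argument for other shifts. It does not: parity together with primitivity makes $d_w(h)=2$ alone sufficient, and this is exactly what \Cref{lem:positive-minimum-primitive} and \Cref{lem:dw-even} provide.
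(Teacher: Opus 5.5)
Your proof is correct, and it differs from the paper's only in how it passes from words to necklaces.

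The paper never counts the full word set. It rotates the unique unequal antipodal pair of an HT necklace to $\{0,n/2\}$ and fixes the orientation $(w_0,w_{n/2})=(1,0)$. It then counts these normalized representatives directly as $2^{n/2-1}$ choices of free bits, using \Cref{lem:normalized-primitive}(a) and parity to confirm $\minDist=2$.

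You instead count all $n\,2^{n/2-1}$ words with $d_w(n/2)=2$ (active pair, orientation, background values) and divide by the orbit size $n$, via the final assertion of \Cref{lem:positive-minimum-primitive}.

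Each route has its own payoff:
\begin{itemize}
\item Your version replaces the uniqueness of the normalized representative, which the paper states briefly and which rests on the same trivial-stabilizer argument, with a single orbit-size division.
\item Your version also makes explicit that at the half-turn the exact-minimum class count equals the prescribed-shift necklace count $2^{g-1}(m-1)$ with $g=n/2$ and $m=2$. Your Step~1 shows the condition $\minDist=2$ is automatic, and the HT class imposes no further restriction on the minimizer set.
\item The paper's version produces an explicit canonical representative for each HT necklace. This matches the normal-form style it uses for the SP and subgroup-shaped classes.
\end{itemize}
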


\begin{proof}
Assume $n$ is even and write $n=2m$.
A word satisfies $d_w(m)=2$ exactly when among the $m$ opposite pairs $\{i,i+m\}$ there is exactly one unequal pair, because mismatches under the half-turn always occur in opposite pairs.

For $[w]\in\DtwoHT(n)$, rotate this unique unequal pair to $\{0,m\}$ and choose the orientation with $(w_0,w_m)=(1,0)$.
Then each necklace has a unique representative satisfying
\[
w_0=1,\qquad w_m=0,\qquad w_j=w_{j+m}\ \text{for }1\le j\le m-1.
\]
Thus the HT count is taken at the level of these normalized representatives: every HT necklace contributes exactly one such word, and every such word determines exactly one necklace.
Conversely, any choice of the $m-1$ remaining bits produces such a normalized representative, so there are $2^{m-1}$ of them.
By \Cref{lem:normalized-primitive}(a), every such representative is primitive, hence $\minDist(w)\neq 0$.
Together with $d_w(m)=2$, which gives $\minDist(w)\le 2$, \Cref{lem:dw-even} therefore yields $\minDist(w)=2$.
\end{proof}

\begin{theorem}[Refined half-turn subgroup slices]
\label{thm:countHT-tau}
Let $n\ge 2$ be even and let $\tau\mid (n/2)$.
Write $m:=n/\tau$.
If $m\ge 4$, then
\[
|\DtwoHT(n;\tau)|=\sum_{d\mid \tau}\mu(d)\,2^{\tau/d}.
\]
\end{theorem}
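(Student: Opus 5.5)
The plan is to obtain the refined slice count directly from the defect--background dictionary of \Cref{sec:defect-background}. Since $n$ is even and $\tau\mid(n/2)$, the quotient order $m=n/\tau$ is even. The hypothesis $m\ge4$ puts us in the ``$M$ even and $M\ge4$'' alternative of \Cref{prop:defect-branch-normalization}, \Cref{cor:defect-primitive-readout}, and \Cref{thm:defect-primitive-branch-split}, with $M=m$.

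First, I identify the slice precisely. By definition, $\DtwoHT(n;\tau)$ consists of those $\mathcal N\in\DtwoHT(n)$ with $\Ktwo(\mathcal N)=\tau\Zn\setminus\{0\}$. Conversely, any necklace with $\minDist=2$ and this minimizer set automatically contains the half-turn, by \Cref{lem:subgroup-arith}(b), because $M$ is even. So the slice is exactly the set of distance-$2$ necklaces whose minimizer set is $\tau\Zn\setminus\{0\}$.

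Next, I invoke \Cref{thm:defect-primitive-branch-split}(b) in the even case. It says that $c\mapsto[w(c)]$ is a bijection from primitive words $c\in\B^\tau$ onto $\DtwoHT(n;\tau)$. The two halves of this bijection come from earlier results:
\begin{itemize}
\item injectivity comes from the uniqueness of the normalized representative in \Cref{prop:defect-branch-normalization};
\item surjectivity comes from part~(a), which reconstructs a primitive $c$ from any necklace whose minimizer set is $\tau\Zn\setminus\{0\}$.
\end{itemize}
Membership of $[w(c)]$ in $\Dtwo(n)$, and its placement in the HT slice, come from \Cref{cor:defect-primitive-readout}(c).

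Equivalently, \Cref{cor:defect-counting-dictionary}(b) already records that this slice has cardinality $\PrimCt{\tau}$. The count of primitive binary words of length $\tau$ is $\sum_{d\mid\tau}\mu(d)2^{\tau/d}$ by \eqref{eq:prim-words}, which gives the formula.

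The only point needing care is the slice identification in the first step: the slice must coincide with the ``$M$ even'' image in \Cref{thm:defect-primitive-branch-split}. The priority convention places every such necklace in HT, and the condition $m\ge4$ excludes the pure half-turn boundary $M=2$. That boundary is handled separately in \Cref{cor:defect-counting-dictionary}(a) and is not covered by the primitive-word bijection. Beyond this check, no new argument should be needed.
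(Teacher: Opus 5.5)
Your proposal is correct and is essentially the paper's proof: the paper also notes that $m$ is even, uses $m\ge4$ to exclude the pure boundary $\tau=n/2$, and then reads off $|\DtwoHT(n;\tau)|=\PrimCt{\tau}$ from the bijection recorded in \Cref{cor:defect-counting-dictionary}(b) together with \eqref{eq:prim-words}. Your extra check, that even $M$ forces $n/2\in\Ktwo(\mathcal N)$ so the slice is exactly the even-$M$ image in \Cref{thm:defect-primitive-branch-split}(b), is implicit in the paper's argument and does no harm.
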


\begin{proof}
Fix such a divisor $\tau$ and write $m:=n/\tau$, so $m$ is even. The hypothesis $m\ge 4$ places us in the mixed half-turn slice rather than the pure boundary case $\tau=n/2$.
By part~(b) of \Cref{cor:defect-counting-dictionary}, the mixed half-turn slice $\DtwoHT(n;\tau)$ is in necklace-level bijection with the primitive quotient--background words of length $\tau$.
By \eqref{eq:prim-words}, the number of such words is $\sum_{d\mid \tau}\mu(d)\,2^{\tau/d}$.
\end{proof}

\begin{corollary}[Pure half-turn slice]
\label{cor:countHT-pure}
Let $n=2m\ge 2$.
Then the number of necklace classes $\mathcal N\in\DtwoHT(n)$ with $\Ktwo(\mathcal N)=\{m\}$ equals
\[
|\DtwoHT(n;n/2)|=\sum_{d\mid m}\mu(d)\,2^{m/d}-2^{m-1}.
\]
\end{corollary}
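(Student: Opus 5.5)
The plan is to obtain the pure half-turn count by subtraction: take the whole HT class and remove the mixed half-turn slices. By \Cref{thm:trichotomy}(i), every $\mathcal N\in\DtwoHT(n)$ has $\Ktwo(\mathcal N)=\tau\Zn\setminus\{0\}$ with $\tau=\gcdop(n,\Ktwo(\mathcal N))$ and $n/\tau$ even. Hence $\tau\mid m$, and the slices $\DtwoHT(n;\tau)$, $\tau\mid m$, partition $\DtwoHT(n)$. These slices are disjoint because $\tau$ is determined by $\Ktwo(\mathcal N)$. It follows that
\[
|\DtwoHT(n;m)|=|\DtwoHT(n)|-\sum_{\substack{\tau\mid m\\ \tau<m}}|\DtwoHT(n;\tau)|.
\]

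Next I substitute the earlier counts. \Cref{thm:countHT} gives $|\DtwoHT(n)|=2^{m-1}$. For each proper divisor $\tau$ of $m$, the quotient $n/\tau=2m/\tau$ is even and at least $4$, so \Cref{thm:countHT-tau} gives $|\DtwoHT(n;\tau)|=\PrimCt{\tau}$. This yields
\[
|\DtwoHT(n;m)|=2^{m-1}-\sum_{\substack{\tau\mid m\\ \tau<m}}\PrimCt{\tau}.
\]
The classical identity $\sum_{\tau\mid m}\PrimCt{\tau}=2^m$ holds because every word of length $m$ has a unique minimal period. Using it, the proper-divisor sum equals $2^m-\PrimCt{m}$. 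Therefore $|\DtwoHT(n;m)|=2^{m-1}-2^m+\PrimCt{m}=\PrimCt{m}-2^{m-1}$, which is the claimed formula.

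The main point to check is the partition step, namely that the pure and mixed slices together exhaust the HT class. This is exactly the content of \Cref{thm:trichotomy}(i), so no further work is needed there. The edge case $m=1$ ($n=2$) should be handled separately. The word $10$ has $\Ktwo=\{1\}$, so the slice count is $1$, and the formula gives $\PrimCt{1}-2^0=2-1=1$, which agrees. For $m=1$ there are no proper divisors, so the subtraction step is empty and the argument above covers this case uniformly anyway.
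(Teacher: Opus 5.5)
Your proposal is correct and follows essentially the same route as the paper. You split $\DtwoHT(n)$ into the slices $\DtwoHT(n;\tau)$ with $\tau\mid m$ via \Cref{thm:trichotomy}, subtract the mixed slices counted by \Cref{thm:countHT-tau} from the total $2^{m-1}$ of \Cref{thm:countHT}, and evaluate the proper-divisor sum with $\sum_{\tau\mid m}\PrimCt{\tau}=2^m$. Your check of the $m=1$ edge case is a harmless extra.
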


\begin{proof}
By \Cref{thm:trichotomy}, every necklace class $\mathcal N\in\DtwoHT(n)$ lies in a unique refined slice $\DtwoHT(n;\tau)$ with $\tau\mid m$. The proper divisors $\tau<m$ are exactly the mixed half-turn slices counted by \Cref{thm:countHT-tau}, while $\tau=m=n/2$ is the pure boundary case $\Ktwo(\mathcal N)=\{m\}$. Therefore
\[
|\DtwoHT(n)|=|\DtwoHT(n;n/2)|+\sum_{\substack{\tau\mid m\\ \tau<m}} |\DtwoHT(n;\tau)|.
\]
For each proper divisor $\tau<m$, the corresponding mixed half-turn slice is counted in \Cref{thm:countHT-tau}, so
\[
|\DtwoHT(n;\tau)|=\sum_{d\mid \tau}\mu(d)\,2^{\tau/d}.
\]

Every binary word of length $m$ has a unique minimal period $\tau\mid m$.  If $\tau$ is that minimal period, then the word is obtained by repeating the unique primitive word given by its first $\tau$ letters exactly $m/\tau$ times.  Conversely, repeating a primitive word of length $\tau$ exactly $m/\tau$ times gives a length-$m$ word with minimal period $\tau$.
Hence the primitive words of length $\tau$ parameterize the proper-divisor refined slice $\DtwoHT(n;\tau)$ for each $\tau<m$, and counting length-$m$ words by minimal period gives
\[
2^m=\sum_{\tau\mid m}\#\{\text{primitive words of length }\tau\}.
\]
Using \eqref{eq:prim-words} and isolating the term $\tau=m$ therefore yields
\[
\sum_{\substack{\tau\mid m\\ \tau<m}} |\DtwoHT(n;\tau)|
=\sum_{\substack{\tau\mid m\\ \tau<m}}\sum_{d\mid \tau}\mu(d)\,2^{\tau/d}
=2^m-\sum_{d\mid m}\mu(d)\,2^{m/d}.
\]
Subtracting the total contribution of these proper-divisor refined slices from $|\DtwoHT(n)|=2^{m-1}$ (from \Cref{thm:countHT}) leaves precisely the pure half-turn slice $\DtwoHT(n;n/2)$, which is the claimed formula.
\end{proof}

\begin{remark}[Pure versus mixed half-turn]
\Cref{cor:countHT-pure} isolates the pure boundary $\Ktwo(\mathcal N)=\{n/2\}$ from the mixed half-turn subgroup-shaped slices $\DtwoHT(n;\tau)$ with $\tau<n/2$. This separation is used again when the aggregate formula is assembled.
\end{remark}

\subsection{Counting \texorpdfstring{$\DtwoSP$}{distance-2 SP} (single-pair class)}

Here the minimizer set is a single antipodal pair $\{k,n-k\}$, so the natural refinement parameter is the order $m=\operatorname{ord}_{\Zn}(k)$ from \Cref{def:DtwoSPByOrder}.

\begin{proposition}[Interval model for the generic SP contribution]
\label{prop:SP-interval}
Let $m\ge 4$ divide $n$, set $g:=n/m$, and fix a representative $k\in\Zn$ of an unordered pair $\{\pm k\}=\{k,n-k\}$ whose elements have order $m$ in the sense of \Cref{def:DtwoSPByOrder}.
For each necklace class $\mathcal N$ with $\Ktwo(\mathcal N)=\{k,n-k\}$, there is a unique rotated representative $w\in\mathcal N$ for which the unique active $k$-orbit is
\[
O_0:=\{0,k,\dots,(m-1)k\},
\]
and the cyclic interval forming the $1$-set on $O_0$ starts at $0$.
With this normalization, there is a unique $b\in\{2,\dots,m-2\}$ such that
\[
\{t\in\Zquot{m}:w_{tk}=1\}=\{0,1,\dots,b-1\}.
\]
For every $1\le r\le g-1$, the orbit $O_r:=\{r+tk:t\in\Zquot{m}\}$ is constant; denote its value by $c_r\in\B$.
The assignment is
\[
w\longmapsto (b,c_1,\dots,c_{g-1}).
\]
This assignment is a bijection from the set of such normalized representatives to $\{2,\dots,m-2\}\times\B^{g-1}$.
Fix an unordered minimizer pair $\{\pm k\}=\{k,n-k\}$ of order $m$ and choose one representative $k$ for the normalization.  The bijection above gives exactly $(m-3)2^{g-1}$ necklace classes $\mathcal N$ satisfying $\Ktwo(\mathcal N)=\{k,n-k\}$.
\end{proposition}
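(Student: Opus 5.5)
We prove the proposition in three steps: existence and uniqueness of the normalized representative, the bijection with $\{2,\dots,m-2\}\times\B^{g-1}$, and the resulting count.

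\textbf{Existence and uniqueness.} Let $\mathcal N$ satisfy $\Ktwo(\mathcal N)=\{k,n-k\}$ and pick any representative $x$. Since $d_x(k)=2$, \Cref{lem:active,lem:block} give a unique active $k$-orbit on which the $1$-set is a proper cyclic interval of length $b$, and all other $k$-orbits are constant. The interval length cannot be $1$ or $m-1$: with $m\ge4$, \Cref{prop:normalforms}(ii) would then make $\Ktwo(x)$ subgroup-shaped of size at least $m-1\ge3$, contradicting $|\Ktwo|=2$. Hence $2\le b\le m-2$.

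Next, rotate $x$ by some $s$ so that the starting point of this interval, read in the $k$-orbit order, sits at index $0$. This rotation maps the active orbit onto $O_0=\langle k\rangle$, and the $1$-set becomes $\{tk:0\le t\le b-1\}$. Orbits are carried to orbits and constancy is preserved, so the remaining orbits $O_r$ ($1\le r\le g-1$) are constant.

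For uniqueness, suppose $\rot^s w$ is also normalized. Then $s$ must map $O_0$ to itself, so $s=jk$. The $1$-set on $O_0$ then becomes $I+j$, and this interval starts at $0$ only when $j\equiv0\pmod m$, since a proper interval has a unique starting point. Hence $s=0$. The reduction in the same argument is fixed: the orientation is pinned by the chosen representative $k$ (not $-k$), so "starts at $0$" in the $t$-order is unambiguous.

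\textbf{Bijection.} The map $w\mapsto(b,c_1,\dots,c_{g-1})$ is clearly injective, because the data determine every bit of $w$. For surjectivity, take any tuple and build the corresponding $w$. Then $d_w(k)=2$ by \Cref{eq:transition-count}, and $w$ is primitive by \Cref{lem:normalized-primitive}(b). By \Cref{lem:dw-even} it follows that $\minDist(w)=2$, so \Cref{prop:normalforms}(i) gives $\Ktwo(w)=\{k,n-k\}$. Thus $[w]$ lies in the required set, and it is normalized by construction.

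\textbf{Count.} Combining the two steps, classes with $\Ktwo=\{k,n-k\}$ are in bijection with normalized representatives, hence with $\{2,\dots,m-2\}\times\B^{g-1}$. This set has $(m-3)2^{g-1}$ elements.

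The main obstacle is uniqueness of the normalization. Its subtle points are the orientation convention relative to the chosen $k$ and the exclusion of rotations that move the active orbit. The latter holds because $O_0$ is the only nonconstant $k$-orbit, so any rotation preserving the normal form must send $O_0$ to itself.
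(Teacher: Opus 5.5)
Your proof is correct and follows the paper's argument essentially step for step. You use \Cref{lem:active,lem:block} for the active orbit, exclude $b\in\{1,m-1\}$, prove uniqueness of the normalization by ruling out rotations outside $\langle k\rangle$ and nonzero multiples of $k$, and get surjectivity from \Cref{lem:normalized-primitive}(b) and \Cref{prop:normalforms}(i). The only difference is that you exclude the extremal lengths through \Cref{prop:normalforms}(ii), which formally requires $\minDist(x)=2$; this follows at once from $d_x(k)=2$ via \Cref{lem:positive-minimum-primitive} and parity, and you should state it. The paper instead uses the lighter \Cref{prop:interval-shift-criterion}(d), which needs only $d_x(k)=2$.
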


\begin{proof}
Let $w$ satisfy $\Ktwo(w)=\{k,n-k\}$.
By \Cref{lem:active,lem:block}, there is a unique active $k$-orbit and its $1$-set is a proper cyclic interval.
Since $|\Ktwo(w)|=2$, \Cref{prop:interval-shift-criterion} excludes the extremal lengths $1$ and $m-1$, so the interval length satisfies $b\in\{2,\dots,m-2\}$.
Rotate so that this active orbit is $O_0$ and the cyclic interval forming its $1$-set starts at $0$.
The normalization is unique: a rotation outside $\langle k\rangle$ moves the active orbit, while a nonzero multiple of $k$ preserves $O_0$ but shifts the chosen starting point of the $1$-set away from $0$.
Thus, for the chosen representative $k$ of the unordered pair $\{\pm k\}$, counting the tuples $(b,c_1,\dots,c_{g-1})$ is already counting necklaces with that prescribed minimizer pair via their unique normalized representatives.
Replacing the representative by $-k$ only reverses the interval coordinate, so it does not produce a second contribution and does not change the pair-based count.
All other $k$-orbits are constant by \Cref{lem:active,lem:block}, so each necklace yields a unique tuple $(b,c_1,\dots,c_{g-1})$.

Conversely, any tuple in $\{2,\dots,m-2\}\times\B^{g-1}$ defines a word with a single nonconstant $k$-orbit, namely the interval $\{0,\dots,b-1\}$ on $O_0$.
That word satisfies $d_w(k)=d_w(n-k)=2$.
By \Cref{lem:normalized-primitive}(b), the constructed word is primitive; together with $d_w(k)=2$, parity gives $\minDist(w)=2$.
The active $k$-orbit has interval length $2\le b\le m-2$, so \Cref{prop:normalforms}(i) gives
\[
\Ktwo(w)=\{k,n-k\}.
\]
Thus the construction is inverse to the normalization map.
The count is therefore
\[
|\{2,\dots,m-2\}|\,|\B^{g-1}|=(m-3)\,2^{g-1}.
\]
\end{proof}

By part~(b) of \Cref{cor:defect-counting-dictionary}, the subgroup-shaped defect--background contribution to SP is only the order-$3$ SP boundary case, while every SP slice with $m\ge 4$ is counted by the genuine-interval branch of \Cref{prop:SP-interval}.

\begin{theorem}[SP summand in \Cref{thm:main}]
\label{thm:countSP}
Let $n\ge 2$.
\begin{enumerate}[label=(\alph*),leftmargin=2.5em]
\item For $m=3$,
\[
|\DtwoSP(n;3)|=\One_{3\mid n}\sum_{d\mid (n/3)} \mu(d)\,2^{\frac{n}{3d}}.
\]
\item For each divisor $m\ge 4$ of $n$,
\[
|\DtwoSP(n;m)|=\frac{\varphi(m)}{2}\,2^{\frac{n}{m}-1}\,(m-3).
\]
\item Consequently,
\[
|\DtwoSP(n)|
=
\One_{3\mid n}\sum_{d\mid (n/3)} \mu(d)\,2^{\frac{n}{3d}}
\;+
\sum_{\substack{m\mid n\\ m\ge 4}}
\frac{\varphi(m)}{2}\,2^{\frac{n}{m}-1}\,(m-3).
\]
\end{enumerate}
\end{theorem}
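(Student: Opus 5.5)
The plan is to split $\DtwoSP(n)$ by the order $m=\operatorname{ord}_{\Zn}(k)$ of the elements of the unique minimizer pair $\{k,n-k\}$. This gives the disjoint union $\DtwoSP(n)=\bigsqcup_{m\mid n,\,m\ge3}\DtwoSP(n;m)$. The order is well defined because $k$ and $-k$ have the same order, and $m=2$ is excluded because an element of order $2$ is the half-turn, which is assigned to HT by priority. Part~(c) is then the sum of parts~(a) and~(b) over divisors $m\ge3$ of $n$, so the work lies in (a) and (b).

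For part~(a), with $m=3$ and $3\mid n$, set $\tau=n/3$. An element $k$ of order $3$ satisfies $\langle k\rangle=\tau\Zn$ and $\{k,n-k\}=\tau\Zn\setminus\{0\}$. Hence $\DtwoSP(n;3)$ is exactly the subgroup-shaped slice of quotient order $M=3$. By \Cref{thm:defect-primitive-branch-split}(b) and \Cref{cor:defect-counting-dictionary}(b), this slice is in bijection with primitive quotient--background words of length $\tau$. Its size is therefore $\PrimCt{n/3}$, which is the displayed divisor sum. If $3\nmid n$ there is no element of order $3$, so the slice is empty and the indicator gives $0$.

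For part~(b), fix a divisor $m\ge4$ of $n$ and put $g=n/m$. First I show that every necklace in $\DtwoSP(n;m)$ lies in the genuine-interval branch. Pick $k$ of order $m$ with $\Ktwo=\{k,n-k\}$ and apply \Cref{prop:normalforms}. The half-turn branch is impossible since $m\ge4$. The singleton/co-singleton branch would give $\Ktwo=\tau\Zn\setminus\{0\}$ with $|K|=2$, which by \Cref{lem:subgroup-arith}(c) forces quotient order $3$. Then $K$ would be the set of the two order-$3$ elements, contradicting $\operatorname{ord}(k)=m\ge4$. So the active orbit has interval length $2\le b\le m-2$. Next, the number of unordered pairs $\{k,-k\}$ of elements of order $m$ equals $\varphi(m)/2$: there are $\varphi(m)$ elements of order $m$ in $\Zn$, and $k\ne -k$ since $m\ge3$. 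Different pairs give disjoint sets of necklaces, because $\Ktwo$ determines the pair. For each pair, \Cref{prop:SP-interval} gives exactly $(m-3)2^{g-1}$ necklaces with $\Ktwo=\{k,n-k\}$. Multiplying yields $\frac{\varphi(m)}2\,2^{n/m-1}(m-3)$. These necklaces lie in $\Dtwo(n)$ by \Cref{prop:SP-interval}, and they are in SP rather than HT since $n/2\notin\{k,n-k\}$.

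The main point to watch is disjointness and exhaustiveness across all $m$. A necklace in $\DtwoSP(n)$ has a unique pair and hence a unique order $m$. The $m=3$ slice is counted by the defect branch, while the $m\ge4$ slices are counted only by the interval branch. The argument above rules out any defect-branch SP necklace with $m\ge4$, so nothing is double-counted or omitted, and summing (a) and (b) gives (c).
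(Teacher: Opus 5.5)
Your proposal is correct and follows essentially the same route as the paper: split $\DtwoSP(n)$ by the order $m$ of the minimizer pair, get the $m=3$ slice from the primitive quotient--background bijection of \Cref{cor:defect-counting-dictionary}(b), and for $m\ge4$ multiply the per-pair count $(m-3)2^{n/m-1}$ of \Cref{prop:SP-interval} by the $\varphi(m)/2$ unordered pairs of order-$m$ shifts. Your extra check that the singleton/co-singleton branch cannot occur for $m\ge4$ (via \Cref{lem:subgroup-arith}(c)), together with your disjointness remarks, only makes explicit what the paper handles inside the proof of \Cref{prop:SP-interval} through the interval-shift criterion.
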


\begin{proof}
For each divisor $m\mid n$, part~(b) of \Cref{cor:defect-counting-dictionary} contributes only the order-$3$ SP boundary case, while \Cref{prop:SP-interval} counts the SP pieces in the genuine-interval branch for order $m\ge 4$. The interval count is attached to the unordered minimizer pair $\{\pm k\}$, not to the two orientations separately, and shifts of order $m$ come in $\varphi(m)/2$ such unordered pairs.
If $3\nmid n$, the convention in \Cref{def:DtwoSPByOrder} gives $\DtwoSP(n;3)=\varnothing$, and the right-hand side of part~(a) is zero.
If $3\mid n$, the order-$3$ SP boundary case is unique. Writing $\tau=n/3$, part~(b) of \Cref{cor:defect-counting-dictionary} identifies $\DtwoSP(n;3)$ with primitive quotient--background words of length $\tau$, so \eqref{eq:prim-words} gives part~(a).
If $m\ge 4$, \Cref{prop:SP-interval} already counts one normalized interval representative for each necklace with prescribed unordered minimizer pair $\{\pm k\}$, so each such pair contributes $(m-3)2^{n/m-1}$ necklaces, which gives part~(b).
Summing the contribution of the order-$3$ SP boundary case together with all divisors $m\ge 4$ yields part~(c).
\end{proof}

\subsection{Counting \texorpdfstring{$\DtwoSUB$}{distance-2 SUB} (subgroup-pattern class)}

Again part~(b) of \Cref{cor:defect-counting-dictionary} reduces the odd-quotient SUB count to primitive quotient--background words of length $\tau$.

\begin{theorem}[SUB summand in \Cref{thm:main}]
\label{thm:countSUB}
Let $n\ge 2$.
For each divisor $\tau\mid n$ such that $m:=n/\tau$ is odd and $m\ge 5$,
\[
|\DtwoSUB(n;\tau)|=\sum_{d\mid \tau}\mu(d)\,2^{\tau/d}.
\]
Consequently,
\[
|\DtwoSUB(n)|=\sum_{\substack{\tau\mid n\\ (n/\tau)\ \mathrm{odd}\\ n/\tau\ge 5}}
\sum_{d\mid \tau}\mu(d)\,2^{\tau/d}.
\]
\end{theorem}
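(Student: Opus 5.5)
The plan is to deduce both displays from the defect--background dictionary together with the collapse theorem. For the slice formula, fix a divisor $\tau\mid n$ with $m=n/\tau$ odd and $m\ge5$. These hypotheses are exactly the odd-quotient alternative allowed in \Cref{thm:defect-primitive-branch-split}. Part~(b) of that theorem therefore gives a bijection $c\mapsto[w(c)]$ from primitive quotient--background words $c\in\B^\tau$ onto $\DtwoSUB(n;\tau)$. \Cref{cor:defect-counting-dictionary}(b) records the resulting cardinality $\PrimCt{\tau}$. Combining this with \eqref{eq:prim-words}, and with the word-level (not necklace-level) normalization explained in \Cref{rem:defect-words-not-necklaces}, gives $|\DtwoSUB(n;\tau)|=\sum_{d\mid\tau}\mu(d)2^{\tau/d}$.

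For the aggregate formula, I would write $\DtwoSUB(n)$ as the disjoint union of the slices $\DtwoSUB(n;\tau)$ over the admissible $\tau$. There are two points to check.

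\textbf{Exhaustiveness.} By \Cref{def:DtwoSUB}, every $\mathcal N\in\DtwoSUB(n)$ has $\Ktwo(\mathcal N)=\tau\Zn\setminus\{0\}$ for some $\tau\mid n$. By \Cref{thm:trichotomy}(iii), equivalently by \Cref{lem:subgroup-arith}, the conditions $|K|>2$ and $n/2\notin K$ force $M=n/\tau$ to be odd with $M\ge5$. So $\tau$ ranges exactly over the summation set.

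\textbf{Disjointness.} The divisor $\tau$ is determined by $\mathcal N$, since $\tau=\gcdop(n,\Ktwo(\mathcal N))$. Equivalently, distinct divisors give distinct subgroups $\tau\Zn$, so no necklace lies in two slices. Summing the slice counts then yields the displayed double sum.

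I also need that every slice lies inside $\DtwoSUB(n)$ under the priority convention, i.e.\ that no necklace counted here was already assigned to HT or SP. This is immediate from \Cref{lem:subgroup-arith}: $M$ odd gives $n/2\notin K$, and $M\ge5$ gives $|K|=M-1\ge4>2$.

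No real obstacle remains; the only care needed is the bookkeeping that the slice index $\tau$ is canonical. That is the primitivity condition of \Cref{rem:quotient-primitivity-canonical}, and it is what prevents a nonprimitive $c$ from being double-counted under a smaller divisor.
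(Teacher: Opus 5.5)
Your proposal is correct and follows the paper's route: the slice count comes from the bijection of \Cref{thm:defect-primitive-branch-split}(b) as recorded in \Cref{cor:defect-counting-dictionary}(b) together with \eqref{eq:prim-words}, and the aggregate comes from summing over the admissible $\tau$. Your exhaustiveness and disjointness checks (via $\tau=\gcdop(n,\Ktwo(\mathcal N))$ and \Cref{lem:subgroup-arith}) spell out the summation step that the paper leaves implicit. The containment of each slice in $\DtwoSUB(n)$ is already automatic from the definition of $\DtwoSUB(n;\tau)$.
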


\begin{proof}
Fix an admissible divisor $\tau$.  By \Cref{cor:defect-counting-dictionary}(b), the slice $\DtwoSUB(n;\tau)$ is counted by primitive words in the quotient--background model of length $\tau$.
The primitive-word formula~\eqref{eq:prim-words} counts those words as $\sum_{d\mid \tau}\mu(d)\,2^{\tau/d}$, giving the first formula.
Summing over all admissible $\tau$ gives the second formula.
\end{proof}

\begin{remark}[Excluded small odd quotients]
In \Cref{thm:countSUB} we assume that $m:=n/\tau$ is odd and at least $5$, so that the nonzero part $\tau\Zn\setminus\{0\}$ of the subgroup $\tau\Zn$ has size $m-1>2$.
The remaining odd quotients $m\in\{1,3\}$ are excluded for exactly this reason: when $m=1$ this nonzero part is empty, and when $m=3$ it has size $2$ and the resulting necklaces fall into the $\DtwoSP(n;3)$ slice (counted in \Cref{thm:countSP}).
\end{remark}

\subsection{Counting the union \texorpdfstring{$\Dtwo$}{D2}}

\begin{corollary}[Aggregate count in \Cref{thm:main}]
\label{cor:countD2}
For every $n\ge 2$,
\[
|\Dtwo(n)|=|\DtwoHT(n)|+|\DtwoSP(n)|+|\DtwoSUB(n)|.
\]
Equivalently, this sum has the explicit M\"obius--totient form
\[
|\Dtwo(n)|
=\One_{2\mid n}\,2^{\frac n2-1}
+\One_{3\mid n}\sum_{d\mid (n/3)} \mu(d)\,2^{\frac{n}{3d}}
+\sum_{\substack{m\mid n\\ m\ge 4}}\frac{\varphi(m)}{2}\,2^{\frac{n}{m}-1}\,(m-3)
+\sum_{\substack{\tau\mid n\\ (n/\tau)\ \mathrm{odd}\\ n/\tau\ge 5}}\sum_{d\mid \tau}\mu(d)\,2^{\tau/d}.
\]
Using the primitive-word shorthand, this is equivalently
\[
|\Dtwo(n)|
=\One_{2\mid n}\,2^{\frac n2-1}
+\One_{3\mid n}\PrimCt{n/3}
+\sum_{\substack{m\mid n\\ m\ge 4}}\frac{\varphi(m)}{2}\,2^{\frac{n}{m}-1}\,(m-3)
+\sum_{\substack{\tau\mid n\\ (n/\tau)\ \mathrm{odd}\\ n/\tau\ge 5}}\PrimCt{\tau}.
\]
\end{corollary}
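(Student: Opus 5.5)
The plan is to obtain the corollary by assembling results already proved, so no new combinatorics is needed. The first identity, $|\Dtwo(n)|=|\DtwoHT(n)|+|\DtwoSP(n)|+|\DtwoSUB(n)|$, follows from \Cref{thm:trichotomy}. That theorem gives the disjoint decomposition $\Dtwo(n)=\DtwoHT(n)\dot\cup\DtwoSP(n)\dot\cup\DtwoSUB(n)$ together with $\DtwoMP(n)=\varnothing$ for every $n\ge2$. Taking cardinalities of a finite disjoint union then gives the additive identity.

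Next I substitute the three class formulas. \Cref{thm:countHT} supplies $\One_{2\mid n}2^{n/2-1}$. \Cref{thm:countSP}(c) supplies the order-$3$ boundary term $\One_{3\mid n}\sum_{d\mid(n/3)}\mu(d)2^{n/(3d)}$ plus the genuine-interval sum $\sum_{m\mid n,\,m\ge4}\frac{\varphi(m)}2 2^{n/m-1}(m-3)$. \Cref{thm:countSUB} supplies the odd-quotient sum $\sum_{\tau\mid n,\ n/\tau\ \mathrm{odd},\ n/\tau\ge5}\sum_{d\mid\tau}\mu(d)2^{\tau/d}$. Adding these gives the explicit M\"obius--totient display verbatim.

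Finally, the compact form comes from the definition $\PrimCt{\ell}=\sum_{d\mid\ell}\mu(d)2^{\ell/d}$. With $\ell=n/3$, the identity $2^{(n/3)/d}=2^{n/(3d)}$ turns the second term into $\One_{3\mid n}\PrimCt{n/3}$. With $\ell=\tau$, each inner sum in the SUB term becomes $\PrimCt{\tau}$. Under the casewise indicator convention stated in the introduction, $\PrimCt{n/3}$ is evaluated only when $3\mid n$, so the rewriting is valid in every case.

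The only point that needs care is the bookkeeping of edge cases, namely small $n$ where some index sets are empty. For odd $n$ the HT term vanishes, consistent with \Cref{def:DtwoHT}. When $3\nmid n$, the convention of \Cref{def:DtwoSPByOrder} makes $\DtwoSP(n;3)$ empty, matching the zero indicator. Empty divisor sums (for example $n=2,3$) contribute $0$, matching empty slices. I would also note that summing the slices in \Cref{thm:countSP,thm:countSUB} over $m$ and $\tau$ is already justified there by the disjointness of the slices, which comes from the uniqueness of $\tau=\gcdop(n,K)$ and of the unordered pair $\{\pm k\}$. Beyond these checks, the proof is direct substitution.
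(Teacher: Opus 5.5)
Your proposal is correct and follows the paper's own proof. You use \Cref{thm:trichotomy} for the disjoint HT/SP/SUB decomposition with $\DtwoMP(n)=\varnothing$, then substitute \Cref{thm:countHT,thm:countSP,thm:countSUB} and add; the compact form is just the definition of $\PrimCt{\cdot}$, and your edge-case checks (odd $n$, $3\nmid n$, empty divisor sums) agree with the paper's indicator convention.
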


\begin{proof}
By \Cref{thm:trichotomy}, $\Dtwo(n)$ is the disjoint union of its HT, SP, and SUB subclasses, with the residual multi-pair class empty.
Apply \Cref{thm:countHT,thm:countSP,thm:countSUB} and add.
\end{proof}

\begin{proof}[Proof of \Cref{thm:main}]
For $n=1$, the convention is
\[
\Dtwo(1)=\DtwoHT(1)=\DtwoSP(1)=\DtwoSUB(1)=\DtwoMP(1)=\varnothing.
\]
It gives the stated disjoint decomposition and the empty residual class.  The same convention gives zero for all displayed class formulas under the indicator convention.  Let $n\ge2$.  \Cref{thm:trichotomy} gives the disjoint HT/SP/SUB decomposition and the emptiness of $\DtwoMP(n)$.  The three class formulas are \Cref{thm:countHT,thm:countSP,thm:countSUB}.  Adding these disjoint summands gives the compact aggregate formula.  Substituting $\PrimCt{\ell}=\sum_{d\mid \ell}\mu(d)2^{\ell/d}$ gives the displayed M\"obius--totient divisor-sum form.  This is the form recorded in \Cref{cor:countD2}.
\end{proof}

\begin{remark}[Interpretation of the aggregate formula]
\Cref{cor:countD2} is exactly the disjoint HT/SP/SUB total obtained by summing the HT, SP, and SUB class formulas.  Its role here is structural.  The corollary combines the classwise decomposition into one closed M\"obius--totient divisor-sum expression, and the adjacent $\PrimCt{\ell}$ shorthand records the primitive-word source of the order-$3$ SP boundary case and SUB summands.
\end{remark}

The supplementary material gives a uniform divisor-sum formulation of \Cref{cor:countD2}, a Lambert-series identity, and asymptotic consequences.

\subsection{Odd prime length: structure and count}
\label{sec:prime-structure}

For odd primes, every nonzero shift generates the whole $p$-cycle, so the general one-orbit interval normal form applies verbatim. For $u\in\Zquot{p}\setminus\{0\}$ and $1\le t\le p-1$, set
\[
I_{u,t}:=\{0,u,\dots,(t-1)u\}\subseteq \Zquot{p},
\qquad
w_{u,t}:=\mathbf 1_{I_{u,t}}.
\]

\begin{proposition}[Arithmetic intervals at odd prime length]
\label{prop:prime-interval-normal-form}
Let $p$ be an odd prime, let $w\in\B^p$, and let $u\in\Zquot{p}\setminus\{0\}$.
If $\Ham(w,\rot^u w)=2$, then $\minDist(w)=2$. Moreover, for this fixed generator $u$, there exist $s\in\Zquot{p}$ and a unique $t\in\{1,\dots,p-1\}$ such that
\[
\rot^s w=w_{u,t}.
\]
Conversely, every word $w_{u,t}$ satisfies $\Ham(w_{u,t},\rot^u w_{u,t})=2$.
Moreover,
\[
[w_{u,t}]=[w_{-u,t}],
\qquad
\Ktwo(w_{u,t})=
\begin{cases}
\Zquot{p}\setminus\{0\}, & t\in\{1,p-1\},\\
\{u,-u\}, & 2\le t\le p-2
\end{cases}.
\]
\end{proposition}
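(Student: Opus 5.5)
The plan is to specialise the prescribed-shift machinery of \Cref{sec:structure} to $n=p$ an odd prime and then read off each assertion.

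\emph{Minimum distance and normal form.} Since $p$ is prime and $u\neq0$, we have $g=\gcdop(p,u)=1$ and $m=p$. So there is a single $u$-orbit $\{0,u,2u,\dots,(p-1)u\}=\Zquot{p}$. By \Cref{lem:positive-minimum-primitive}, a word with $d_w(u)=2$ is primitive, so $\minDist(w)>0$. Then \Cref{lem:dw-even} together with $d_w(u)=2$ gives $\minDist(w)=2$.

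Next apply \Cref{lem:block} with $k=u$. In the orbit order $t\mapsto tu$, the $1$-set of $w$ is a proper cyclic interval $\{a,a+1,\dots,a+t-1\}\subseteq\Zquot{p}$ of some length $1\le t\le p-1$. Translating back to positions, the $1$-set is $\{au,(a+1)u,\dots,(a+t-1)u\}=au+I_{u,t}$. Rotating by $s=-au$ therefore gives $\rot^s w=w_{u,t}$. The length $t$ is unique because it equals the weight of $w$, which rotation preserves.

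\emph{Converse and reversal.} For the converse, $w_{u,t}$ has a single $u$-orbit whose $1$-set is a proper interval of length $t$. Its boundary count is $2$, so by \eqref{eq:transition-count} (or \Cref{prop:interval-shift-criterion} with $j=1$) we get $\Ham(w_{u,t},\rot^u w_{u,t})=2$.

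For $[w_{u,t}]=[w_{-u,t}]$, note that
\[
I_{u,t}=\{0,u,\dots,(t-1)u\}=(t-1)u+\{0,-u,\dots,-(t-1)u\}=(t-1)u+I_{-u,t}.
\]
Hence $w_{u,t}$ is the rotation of $w_{-u,t}$ by $(t-1)u$.

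\emph{Minimizer set.} Every nonzero $\ell\in\Zquot{p}$ has the form $\ell=ju$ with $1\le j\le p-1$. With $I$ the interval $\{0,\dots,t-1\}$ in orbit coordinates, this gives
\[
d_{w_{u,t}}(ju)=|I\triangle(I+j)|.
\]
\Cref{prop:interval-shift-criterion}(a) then says this equals $2$ exactly when $t\in\{1,p-1\}$ or $j\equiv\pm1\pmod p$. This yields $\Ktwo=\Zquot{p}\setminus\{0\}$ when $t\in\{1,p-1\}$ and $\Ktwo=\{u,-u\}$ when $2\le t\le p-2$. For $p=3$ the second case is vacuous, which is consistent.

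\emph{Main obstacle.} The only delicate point is the bookkeeping between orbit coordinates ($t$-indices along the $u$-cycle) and actual positions in $\Zquot{p}$. The interval is arithmetic with difference $u$ in positions, and rotations act by adding multiples of $u$ in orbit coordinates. I would state this identification once explicitly, via the bijection $\Zquot{p}\to\Zquot{p}$, $t\mapsto tu$, and then reuse it in every step.
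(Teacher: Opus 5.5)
Your proof is correct and follows essentially the same route as the paper. The paper likewise uses the single $u$-orbit, the interval normal form from the prescribed-shift lemmas, the translation identity $I_{u,t}=(t-1)u+I_{-u,t}$, and \Cref{prop:interval-shift-criterion}(a) to get $\Ktwo$. The only differences are minor: you obtain $\minDist(w)>0$ from the general primitivity part of \Cref{lem:positive-minimum-primitive} rather than the paper's direct primality argument, and you justify uniqueness of $t$ explicitly via the weight of $w$, which the paper leaves implicit.
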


\begin{proof}
Assume first that $\Ham(w,\rot^u w)=2$.
By \Cref{lem:dw-even}, every rotational Hamming distance is even, so $\minDist(w)\le 2$.
If $\minDist(w)=0$, then $w=\rot^v w$ for some nonzero $v\in\Zquot{p}$.
Since $p$ is prime, every nonzero $v$ generates $\Zquot{p}$, so $w$ is constant on the whole $p$-cycle and hence constant.
But then $\Ham(w,\rot^u w)=0$, contradicting the assumption.
Therefore, $\minDist(w)\neq 0$, and since it is even we get $\minDist(w)=2$.
Since $u$ generates $\Zquot{p}$, \Cref{prop:normalforms} applies with $k=u$ and gives a single active orbit on the whole $p$-cycle.
Hence, for the fixed generator $u$, there exist $s\in\Zquot{p}$ and a unique $t\in\{1,\dots,p-1\}$ such that the $1$-set of $\rot^s w$ is $I_{u,t}$; equivalently, $\rot^s w=w_{u,t}$.
Conversely, \Cref{prop:interval-shift-criterion} gives $\Ham(w_{u,t},\rot^u w_{u,t})=2$ for every $1\le t\le p-1$.
Also $I_{-u,t}=-(t-1)u+I_{u,t}$, hence $[w_{u,t}]=[w_{-u,t}]$.
The formula for $\Ktwo(w_{u,t})$ is then the prime case of \Cref{prop:interval-shift-criterion}.
\end{proof}

\begin{corollary}[Odd-prime specialization of \Cref{thm:main}]
\label{cor:prime-interval-count}
Let $p$ be an odd prime.
Then
\[
|\Dtwo(p)|=\frac{(p-1)(p-3)}{2}+2=\frac{p^{2}-4p+7}{2}.
\]
\end{corollary}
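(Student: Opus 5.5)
We will prove the corollary by specializing \Cref{thm:main} to $n=p$ and then cross-checking the result against the arithmetic-interval description in \Cref{prop:prime-interval-normal-form}.

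The divisors of $p$ are $1$ and $p$. Since $p$ is odd, $\One_{2\mid p}=0$, so the HT summand vanishes. We have $3\mid p$ only when $p=3$, and we treat that case separately below. For the generic SP sum over divisors $m\mid p$ with $m\ge4$, the only candidate is $m=p$, and it occurs exactly when $p\ge5$. It contributes
\[
\frac{\varphi(p)}{2}\,2^{p/p-1}(p-3)=\frac{(p-1)(p-3)}{2}.
\]
For the SUB sum we need $\tau\mid p$ with $p/\tau$ odd and $p/\tau\ge5$. The only candidate is $\tau=1$, and it occurs when $p\ge5$. It contributes $\PrimCt{1}=\mu(1)2^1=2$.

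So for $p\ge5$ the total is $(p-1)(p-3)/2+2$. For $p=3$, the generic SP sum and the SUB sum are both empty. The order-$3$ SP term gives $\PrimCt{1}=2$. Meanwhile $(p-1)(p-3)/2=0$, so the same expression $(p-1)(p-3)/2+2=2$ holds. Expanding, $(p^2-4p+3)/2+2=(p^2-4p+7)/2$, which is the stated formula.

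For the structural reading, we will also recount directly using \Cref{prop:prime-interval-normal-form}.

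\begin{itemize}
\item The cases $t\in\{1,p-1\}$ give exactly two necklaces, the singleton and the co-singleton. These are distinct because complements are not identified, and they are independent of $u$.
\item The cases $2\le t\le p-2$ have $\Ktwo=\{u,-u\}$. For a fixed unordered pair $\{\pm u\}$, the necklace determines $t$ uniquely by the proposition. The identification $[w_{u,t}]=[w_{-u,t}]$ means the two choices of sign give the same necklace, so the pair contributes exactly $p-3$ necklaces.
\item Distinct pairs give distinct necklaces because their $\Ktwo$ sets differ, and there are $(p-1)/2$ unordered pairs.
\end{itemize}

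This again yields $(p-1)(p-3)/2+2$.

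The only delicate point is the $p=3$ boundary. There the pair $\{1,2\}$ equals all of $\Zquot{3}\setminus\{0\}$, and the range $2\le t\le p-2$ is empty. We therefore need to confirm that the two singleton and co-singleton necklaces, which the main theorem classifies as order-$3$ SP, are exactly the two counted by the constant term. That check is immediate.
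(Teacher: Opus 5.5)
Your proof is correct and matches the paper's: you specialize the aggregate formula of \Cref{thm:main} to $n=p$, note that the HT term vanishes, and split into $p=3$ (only the order-$3$ SP term $\PrimCt{1}=2$) and $p\ge5$ (the generic SP term $\frac{(p-1)(p-3)}{2}$ plus the SUB term $\PrimCt{1}=2$). Your additional recount via \Cref{prop:prime-interval-normal-form} is also sound and gives an independent check that the paper's proof does not carry out. In that recount you take two necklaces of weight $1$ or $p-1$, and for each of the $(p-1)/2$ antipodal pairs you take $p-3$ interval necklaces, distinguished by weight.
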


\begin{proof}
Specializing \Cref{cor:countD2} to an odd prime $p$ gives the total directly from the global HT/SP/SUB count formulas, so there is no HT contribution. Part~(b) of \Cref{cor:defect-counting-dictionary} gives the subgroup-shaped contribution directly at necklace level, so no further quotient remains to be taken.
If $p=3$, the only defect--background divisor is $\tau=1$, which has quotient order $M=3$, so part~(b) of \Cref{cor:defect-counting-dictionary} gives exactly the contribution $2$ from the order-$3$ SP boundary case, while the generic order-$p$ interval term and the SUB sum both vanish.
If $p\ge 5$, the divisor $\tau=1$ has odd quotient order $M=p\ge 5$, so part~(b) of \Cref{cor:defect-counting-dictionary} gives exactly the SUB term $2$, while the generic SP contribution is the order-$p$ interval term $\frac{(p-1)(p-3)}{2}$.
In both cases the total is $\frac{(p-1)(p-3)}{2}+2$.
\end{proof}

\begin{remark}[Prime specialization]
\Cref{prop:prime-interval-normal-form,cor:prime-interval-count} summarize the prime case inside the necklace-level classifier: every distance-$2$ necklace of odd prime length is an arithmetic interval.  The order-$3$ SP boundary case belongs to SP, and for each prime $p\ge 5$, the prime-length cases split between SP and SUB.
\end{remark}

\section{Conclusion}
\label{sec:discussion}

The classification above shows that distance $2$ is rigid: one prescribed minimizing shift either gives a proper cyclic interval on its active orbit, producing a single antipodal minimizer pair, or gives a singleton/co-singleton defect over a periodic background, producing the nonzero part of a subgroup.  This dichotomy rules out the residual multi-pair case and yields the disjoint HT/SP/SUB enumeration in \Cref{thm:main}.

The result separates three levels that are easily conflated: prescribed-shift counts, exact-minimum necklaces, and classwise minimizer-set structure.  The final formulas are necklace-level formulas, and the passage from words to necklaces is justified by the equivalence of positive $\minDist$ and primitivity.  The odd-prime specialization gives the compact count $|\Dtwo(p)|=(p^2-4p+7)/2$ and the arithmetic-interval normal form.

\paragraph{Outlook.}
The natural continuation is the exact-distance enumeration problem for higher even distances: for a fixed even $\Delta\ge4$, determine the number of necklaces with $\minDist=\Delta$ and describe the possible shapes of the full minimizer set
\[
\{k\in\{1,\ldots,n-1\}:d_w(k)=\Delta\}.
\]
The distance-$2$ case suggests two cautions for this broader problem.  First, prescribed-shift formulas should be treated as projections rather than as substitutes for exact-minimum necklace enumeration.  Second, structural alternatives should be converted into an explicit priority classifier before summing, because natural minimizer-set properties may overlap.  We do not claim here that the HT/SP/SUB priority pattern persists at larger distances; the open problem is to find the appropriate disjoint structural classes and closed counts in those higher layers.

The supplementary material records examples, classifier material, the finite check, optional aggregate reformulations, and comparisons with prescribed-shift and aggregate word-level formulas.

\section*{Declarations}
\textbf{Funding.} This research did not receive any specific grant from funding agencies in the public, commercial, or not-for-profit sectors.

\textbf{Declaration of competing interests.} The author declares that there are no known competing financial interests or personal relationships that could have appeared to influence the work reported in this paper.

\textbf{Declaration of generative AI and AI-assisted technologies in the manuscript preparation process.} During the preparation of this work, the author used ChatGPT for editorial assistance and formatting support. The author reviewed and edited the content as needed and takes full responsibility for the published article.

\textbf{Data and code availability.} No external input datasets or separate code/data artifacts are supplied.  The supplementary material documents a finite check of the stated enumerative formulas by giving the deterministic enumeration scan, pseudocode, aggregate count table, representative sample records, field specification, and aggregate output summary.  These details document the small-length check and are not a proof dependency for the main classification and enumeration theorem.

\end{document}


\let\WriteBookmarks\relax
\let\printorcid\relax
\ExplSyntaxOn
\cs_if_exist:NT \g_stm_nologo_bool
  { \bool_gset_true:N \g_stm_nologo_bool }
\ExplSyntaxOff

\ExplSyntaxOn
\cs_set:Npn \__first_footerline:
  {
    \group_begin:
    \small
    \sffamily
    \ifnum\theblind>0\relax
    \else
      \__short_authors:
    \fi
    \group_end:
  }
\ExplSyntaxOff

\renewcommand{\floatpagefraction}{1}
\renewcommand{\textfraction}{.001}

\shorttitle{Supplement: Minimum nontrivial rotational Hamming distance $2$}
\shortauthors{Makhynko}

\title[mode=title]{Supplementary material for binary necklaces with minimum nontrivial rotational Hamming distance $2$}

\author[1]{Mykola Makhynko}
\cormark[1]
\ead{mykola.makhynko@gmail.com}
\affiliation[1]{organization={Independent Researcher},
            city={Unionville},
            state={Ontario},
            country={Canada}}
\cortext[1]{Corresponding author}

\begin{abstract}
This supplementary material accompanies the main manuscript on binary necklaces with minimum nontrivial rotational Hamming distance $2$.  It provides equivalent aggregate formulations, worked examples, an explicit classifier, and a finite check for $2\le n\le 20$ documented here.  The finite-check section records the scan specification, representative data, count table, and aggregate summary.  The supplement also compares prescribed-shift counts with aggregate word-level formulas in the literature.
\end{abstract}

\begin{keywords}
binary necklaces \sep rotational Hamming distance \sep supplementary material \sep finite check \sep enumeration
\end{keywords}
\maketitle

\section*{Supplement contents}
The notation, result names, and class names follow the main manuscript.  This supplement contains optional consequences, reformulations, the finite check, and comparisons for the distance-$2$ theorems proved in the main manuscript.  It is not a proof dependency for the main classification and enumeration theorem.

The supplement is organized as follows.  Section~S1 states the scope of the supplement.  Sections~S2 and~S3 give uniform aggregate and Lambert-series reformulations.  Section~S4 gives the practical classifier and worked examples, including the quotient--background construction.  Section~S5.1 documents the finite check for $2\le n\le 20$, and Sections~S5.2 and~S5.3 give the prescribed-shift and aggregate comparisons with Gabri\'c's formulas.


\section{Scope of this supplement}
\label{sec:supp-scope}
This supplement accompanies the distance-$2$ enumeration theorem and its comparisons.  It first gives equivalent aggregate formulations, including a compact uniform rewrite of the expanded aggregate count.  It then provides worked examples, a practical classifier, and algebraic comparisons with Gabri\'c's aggregate word-level formulas.  The finite check for $2\le n\le 20$ is documented in \Cref{sec:smalln}.

\section{Uniform aggregate formulation}
\begin{corollary}[Uniform divisor-sum formulation of the aggregate corollary]
\label{cor:countD2-uniform}
Let
\[
\PrimCt{\ell}:=\sum_{d\mid \ell}\mu(d)\,2^{\ell/d},
\]
denote the number of primitive binary words of length~$\ell$ (cf. the primitive-word formula in the main manuscript).
Then for every $n\ge 2$,
\[
|\Dtwo(n)|
=\One_{2\mid n}\,2^{\frac n2-1}
+\sum_{m\mid n}\frac{\varphi(m)}{4}\,2^{\frac{n}{m}-1}\,\bigl(m-3+\lvert m-3\rvert\bigr)
+\sum_{\tau\mid n}
  \left(\sum_{d\mid \tau}\mu(d)\,2^{\tau/d}\right)\One_{2\nmid (n/\tau)}
-\sum_{d\mid n}\mu(d)\,2^{n/d}.
\]
Equivalently, the compact primitive-count form is
\[
|\Dtwo(n)|
=\One_{2\mid n}\,2^{\frac n2-1}
+\sum_{m\mid n}\frac{\varphi(m)}{4}\,2^{\frac{n}{m}-1}\,\bigl(m-3+\lvert m-3\rvert\bigr)
+\sum_{\tau\mid n} \PrimCt{\tau}\,\One_{2\nmid (n/\tau)}-\PrimCt{n}.
\]
\end{corollary}

\begin{proof}
Starting from the aggregate count corollary, rewrite the restricted sum over $m\ge 4$ using
\[
(m-3)\,\One_{m\ge 4}=\max(m-3,0)=\frac{m-3+\lvert m-3\rvert}{2}.
\]
For the M\"obius-sum terms, observe that
\[
\sum_{\tau\mid n} \PrimCt{\tau}\,\One_{2\nmid (n/\tau)}-\PrimCt{n}
=
\One_{3\mid n}\PrimCt{n/3}+\sum_{\substack{\tau\mid n\\ (n/\tau)\ \mathrm{odd}\\ n/\tau\ge 5}}\PrimCt{\tau},
\]
since the odd quotients $n/\tau$ are exactly $1,3,5,\dots$ and the subtraction removes the $\tau=n$ (quotient $1$) contribution.
Expanding $\PrimCt{\tau}=\sum_{d\mid \tau}\mu(d)\,2^{\tau/d}$ and $\PrimCt{n}=\sum_{d\mid n}\mu(d)\,2^{n/d}$ in this compact form gives the displayed M\"obius--totient version.
\end{proof}

\begin{remark}[Uniform divisor-sum form]
\Cref{cor:countD2-uniform} is the same aggregate formula rewritten as a divisor sum over all quotients.  The first display is fully expanded in $\mu$ and $\varphi$.  The second display records the compact primitive-count notation.  The totient term counts the SP pieces arising from the genuine-interval branch.  The primitive-word term combines the odd-quotient defect--background slices; it contributes the order-$3$ SP boundary case, the SUB slices for odd quotients at least $5$, and the subtraction of $\PrimCt{n}$ that removes the trivial quotient $M=1$.
\end{remark}

\section{Lambert series and asymptotic consequences}
The classwise formulas also yield a Lambert-series identity and the coarse asymptotic subsequences needed below. We record them here as brief consequences of the aggregate count corollary, after the classification and exact counts are in place.

\paragraph{Lambert-series generating function for \texorpdfstring{$|\Dtwo(n)|$}{abs D_2(n)}}
\label{sec:lambert}

The next consequence is a Lambert-series identity expressing the total necklace counts in a single series with the explicit coefficient sequence used in the asymptotic remark below.

For an arithmetic function $f:\ZZ_{>0}\to\ZZ$ write
\[
\mathcal{L}[f](x):=\sum_{n\ge 1} f(n)\,\frac{x^n}{1-x^n},
\qquad
\mathcal{L}_2[f](x):=\sum_{n\ge 1} f(n)\,\frac{x^n}{1-2x^n},
\]
for the associated Lambert series. Let
\begin{equation}
\label{eq:An-def}
A(n):=\sum_{d\mid n} d\,\varphi(d).
\end{equation}
Below we apply these operators only to the two specific coefficient sequences
\[
n\mapsto |\Dtwo(n)|,
\qquad
n\mapsto \frac12 A(n)-\frac32 n+3-\frac12\One_{2\mid n},
\]
and for both of these sequences the displayed series converge absolutely whenever $|x|<\tfrac12$.

\begin{theorem}[Single Lambert-series identity for $|\Dtwo(n)|$]
\label{thm:lambert-D2}
With the convention $|\Dtwo(1)|=0$ from the definition of $\Dtwo(1)$, for $|x|<\tfrac12$ the following identity of absolutely convergent Lambert series holds:
\begin{equation}
\label{eq:lambert-D2}
\sum_{n\ge 1} |\Dtwo(n)|\,\frac{x^n}{1-x^n}
=
\sum_{n\ge 2}
\left(
\frac12 A(n)-\frac32 n+3-\frac12\One_{2\mid n}
\right)\frac{x^n}{1-2x^n}.
\end{equation}
In particular, the Lambert series of $|\Dtwo(n)|$ is expressed as a \emph{single} Lambert series with an explicit arithmetic coefficient.
\end{theorem}

\begin{proof}
Both Lambert series in \eqref{eq:lambert-D2} converge absolutely for $|x|<\tfrac12$, so it suffices to compare coefficients of $x^N$.  To keep the divisor names separate, we use the following convention:
\[
\begin{array}{ll}
r & \text{argument of the aggregate count }|\Dtwo(r)|,\\
q & \text{half-turn quotient in the even case},\\
m & \text{quotient/order parameter from the classwise formulas},\\
s & \text{Lambert denominator divisor in a term }2^{N/s-1}.
\end{array}.
\]
With this convention the left-hand side contributes
\[
F(N):=\sum_{r\mid N}|\Dtwo(r)|.
\]
The following paragraphs compute the three pieces of this coefficient.

For $N\ge 2$, substitute the aggregate count corollary.  Its subgroup-shaped terms are already necklace-level contributions, so no further word-to-necklace quotient is taken.  The coefficient extraction has three pieces.  The HT term comes from the even divisors $r$ of $N$.  The odd piece merges the order-$3$ SP boundary case and the odd SUB terms, both indexed by the odd quotient $m$.  The SP contribution from the genuine-interval branch is obtained by extending to all final Lambert divisors $s$ and subtracting the missing cases $m=1,2,3$.  Thus,
\[
F(N)=F_{\mathrm{HT}}(N)+F_{\mathrm{odd}}(N)+F_{\mathrm{SP}}(N),
\]
where
\[
F_{\mathrm{HT}}(N)=\sum_{\substack{s\mid N\\ 2\mid s}}2^{N/s-1},
\qquad
F_{\mathrm{odd}}(N)=
\sum_{\substack{m\mid N\\ m=3}}\ \sum_{\tau\mid (N/m)}\PrimCt{\tau}
+\sum_{\substack{m\mid N\\ m\ \mathrm{odd},\ m\ge 5}}\ \sum_{\tau\mid (N/m)}\PrimCt{\tau},
\]
and
\[
F_{\mathrm{SP}}(N)=\sum_{s\mid N}\left(\sum_{\substack{m\mid s\\ m\ge 4}}\frac{\varphi(m)}{2}(m-3)\right)2^{N/s-1}.
\]

We first isolate the half-turn coefficient piece.
\begin{claim}
\label{clm:lambert-D2-HT}
For every $N\ge 2$,
\[
F_{\mathrm{HT}}(N)=\sum_{\substack{s\mid N\\ 2\mid s}}2^{N/s-1}.
\]
\end{claim}
\begin{proof}
The half-turn term from the aggregate count corollary, evaluated in the outer sum over $r\mid N$, contributes only when $r$ is even.  Writing $r=2q$, summing over all $r\mid N$ is equivalent to summing over $q\mid N/2$, and the contribution is
\[
\One_{2\mid N}\sum_{q\mid N/2}2^{q-1}.
\]
Now set $s:=N/q$.  Since $q\mid N/2$, the divisor $s$ of $N$ is even, and this substitution is a bijection between $q\mid N/2$ and even $s\mid N$.  Hence
\[
F_{\mathrm{HT}}(N)
=\One_{2\mid N}\sum_{q\mid N/2}2^{q-1}
=\sum_{\substack{s\mid N\\ 2\mid s}}2^{N/s-1},
\]
which proves the claim.
\end{proof}

We next isolate the merged odd-quotient primitive contribution.
\begin{claim}
\label{clm:lambert-D2-odd}
For every $N\ge 2$,
\[
F_{\mathrm{odd}}(N)=\sum_{\substack{s\mid N\\ s\ \mathrm{odd},\ s\ge 3}}2^{N/s}
=\sum_{\substack{s\mid N\\ s\ge 2}}\bigl(2-2\One_{2\mid s}\bigr)2^{N/s-1}.
\]
\end{claim}
\begin{proof}
Set
\[
F_{\mathrm{SP},3}(N):=\One_{3\mid N}\sum_{\tau\mid (N/3)}\PrimCt{\tau},
\qquad
F_{\mathrm{SUB}}(N):=\sum_{\substack{m\mid N\\ m\ \mathrm{odd},\ m\ge 5}}\ \sum_{\tau\mid (N/m)}\PrimCt{\tau}.
\]
Then
\[
F_{\mathrm{odd}}(N)
=F_{\mathrm{SP},3}(N)+F_{\mathrm{SUB}}(N)
=\sum_{\substack{m\mid N\\ m=3}}\ \sum_{\tau\mid (N/m)}\PrimCt{\tau}
+\sum_{\substack{m\mid N\\ m\ \mathrm{odd},\ m\ge 5}}\ \sum_{\tau\mid (N/m)}\PrimCt{\tau}
=\sum_{\substack{m\mid N\\ m\ \mathrm{odd},\ m\ge 3}}\ \sum_{\tau\mid (N/m)}\PrimCt{\tau},
\]
because the only odd quotient missing from the SUB range is $m=3$, namely the order-$3$ SP boundary case, while the quotient-$1$ term remains excluded and never contributes to positive distance. By the primitive-word formula in the main manuscript, for each odd divisor $m$ of $N$ with $m\ge 3$,
\[
\sum_{\tau\mid (N/m)}\PrimCt{\tau}=2^{N/m}.
\]
Relabel this odd quotient divisor as the final Lambert divisor $s$.  Then
\[
\begin{aligned}
F_{\mathrm{odd}}(N)
&=\sum_{\substack{s\mid N\\ s\ \mathrm{odd},\ s\ge 3}}2^{N/s} 
&=\sum_{\substack{s\mid N\\ s\ \mathrm{odd},\ s\ge 3}}2\cdot 2^{N/s-1} 
&=\sum_{\substack{s\mid N\\ s\ge 2}}\bigl(2-2\One_{2\mid s}\bigr)2^{N/s-1},
\end{aligned},
\]
where the last expression records the coefficient $2$ on odd divisors $s\ge 3$ and the coefficient $0$ on even divisors. This proves the claim.
\end{proof}

We now isolate the generic SP coefficient piece.
\begin{claim}
\label{clm:lambert-D2-SP}
For every $N\ge 2$,
\[
F_{\mathrm{SP}}(N)=\sum_{s\mid N}\frac12\bigl(A(s)-3s+2+\One_{2\mid s}\bigr)2^{N/s-1}.
\]
\end{claim}
\begin{proof}
For the generic SP term, the divisor restriction $m\ge 4$ reflects that the order-$3$ SP boundary case was already absorbed into $F_{\mathrm{odd}}(N)$, so only the genuine-interval SP divisors remain here. Before collecting by final Lambert divisor, this contribution is
\[
\sum_{r\mid N}\sum_{\substack{m\mid r\\ m\ge 4}}\frac{\varphi(m)}{2}(m-3)2^{r/m-1}.
\]
Write $q=r/m$ and set $s=N/q$. Then $m\mid r\mid N$ is equivalent to $m\mid s\mid N$, and $2^{r/m-1}=2^{N/s-1}$. Therefore, for each final Lambert divisor $s\mid N$, set
\[
C_{\mathrm{SP}}(s):=\sum_{\substack{m\mid s\\ m\ge 4}}\frac{\varphi(m)}{2}(m-3)
=
\frac12\sum_{\substack{m\mid s\\ m\ge 4}}\bigl(m\varphi(m)-3\varphi(m)\bigr).
\]
Then
\[
F_{\mathrm{SP}}(N)=\sum_{s\mid N}C_{\mathrm{SP}}(s)\,2^{N/s-1}.
\]
To rewrite $C_{\mathrm{SP}}(s)$, extend both divisor sums to all divisors of $s$ and record the omitted $m=1,2,3$ contributions explicitly:
\[
\sum_{\substack{m\mid s\\ m\in\{1,2,3\}}}m\varphi(m)
=1+2\One_{2\mid s}+6\One_{3\mid s},
\qquad
\sum_{\substack{m\mid s\\ m\in\{1,2,3\}}}\varphi(m)
=1+\One_{2\mid s}+2\One_{3\mid s}.
\]
Hence
\[
2C_{\mathrm{SP}}(s)
=
\left(\sum_{m\mid s}m\varphi(m)-\sum_{\substack{m\mid s\\ m\in\{1,2,3\}}}m\varphi(m)\right)
-3\left(\sum_{m\mid s}\varphi(m)-\sum_{\substack{m\mid s\\ m\in\{1,2,3\}}}\varphi(m)\right)
=
\bigl(A(s)-1-2\One_{2\mid s}-6\One_{3\mid s}\bigr)
-3\bigl(s-1-\One_{2\mid s}-2\One_{3\mid s}\bigr).
\]
Here we used $\sum_{m\mid s}\varphi(m)=s$ and the definition of $A(s)=\sum_{m\mid s}m\varphi(m)$. Therefore
\[
\begin{aligned}
2C_{\mathrm{SP}}(s)
&=A(s)-1-2\One_{2\mid s}-6\One_{3\mid s}
-3s+3+3\One_{2\mid s}+6\One_{3\mid s}
&=A(s)-3s+2+\One_{2\mid s},
\end{aligned},
\]
so
\[
C_{\mathrm{SP}}(s)=\frac12\bigl(A(s)-3s+2+\One_{2\mid s}\bigr).
\]
Substituting this coefficient identity into the divisor sum for $F_{\mathrm{SP}}(N)$ gives
\[
F_{\mathrm{SP}}(N)=\sum_{s\mid N}\frac12\bigl(A(s)-3s+2+\One_{2\mid s}\bigr)2^{N/s-1},
\]
which proves the claim.
\end{proof}

Finally, \Cref{clm:lambert-D2-HT,clm:lambert-D2-odd,clm:lambert-D2-SP} identify the three divisorwise contributions to
\[
F(N)=F_{\mathrm{HT}}(N)+F_{\mathrm{odd}}(N)+F_{\mathrm{SP}}(N).
\]
Therefore, for each final Lambert divisor $s\mid N$ with $s\ge 2$, the total coefficient of $2^{N/s-1}$ is
\[
C(s):=
\One_{2\mid s}
+\bigl(2-2\One_{2\mid s}\bigr)
+\frac12\bigl(A(s)-3s+2+\One_{2\mid s}\bigr).
\]
This simplifies to
\[
C(s)=\frac12A(s)-\frac32s+3-\frac12\One_{2\mid s}.
\]
Hence, for every $N\ge 2$,
\[
F(N)=\sum_{\substack{s\mid N\\ s\ge 2}}
\left(\frac12A(s)-\frac32s+3-\frac12\One_{2\mid s}\right)2^{N/s-1}.
\]
This is exactly the coefficient of $x^N$ on the right-hand side of \eqref{eq:lambert-D2}, since
\[
\frac{x^s}{1-2x^s}=\sum_{j\ge 1}2^{j-1}x^{js},
\]
contributes $2^{N/s-1}$ whenever $s\mid N$. For $N=1$, both sides have coefficient $0$ because $|\Dtwo(1)|=0$ and the right-hand sum starts at $s=2$. Therefore, all coefficients agree.
\end{proof}

\begin{lemma}[Even-case dominant term]
\label{lem:countD2-even-bound}
Let $d(n)$ denote the number of positive divisors of $n$. For even $n\ge 2$,
\[
|\Dtwo(n)|=2^{n/2-1}+O\!\left(\One_{3\mid n}\,2^{n/3}+n^{2}d(n)\,2^{n/4}\right).
\]
\end{lemma}

\begin{proof}
Assume that $n$ is even. The aggregate count corollary gives the HT contribution exactly as $2^{n/2-1}$.
For the order-$3$ SP boundary case,
\[
\One_{3\mid n}\sum_{d\mid (n/3)} \mu(d)\,2^{\frac{n}{3d}}
=\One_{3\mid n}\PrimCt{n/3}
=O\!\left(\One_{3\mid n}\,2^{n/3}\right),
\]
because $\PrimCt{\ell}$ counts primitive binary words of length $\ell$ and is therefore at most $2^\ell$.

For the generic interval SP contribution, every divisor $m\mid n$ with $m\ge 4$ satisfies $n/m\le n/4$, so
\[
\frac{\varphi(m)}{2}\,2^{\frac{n}{m}-1}(m-3)\le \frac{m^2}{2}\,2^{n/4-1}\le n^2 2^{n/4}.
\]
There are at most $d(n)$ such divisors. Hence
\[
\sum_{\substack{m\mid n\\ m\ge 4}}\frac{\varphi(m)}{2}\,2^{\frac{n}{m}-1}(m-3)
=O\!\left(n^2 d(n)\,2^{n/4}\right).
\]

For the remaining SUB contribution, every admissible divisor $\tau\mid n$ has odd quotient $n/\tau\ge 5$, so $\tau\le n/5$. Again using $\PrimCt{\tau}\le 2^\tau$,
\[
\sum_{\substack{\tau\mid n\\ (n/\tau)\ \mathrm{odd}\\ n/\tau\ge 5}}\sum_{d\mid \tau}\mu(d)\,2^{\tau/d}
=\sum_{\substack{\tau\mid n\\ (n/\tau)\ \mathrm{odd}\\ n/\tau\ge 5}}\PrimCt{\tau}
=O\!\left(d(n)\,2^{n/5}\right),
\]
which is absorbed by $O\!\left(n^2 d(n)\,2^{n/4}\right)$.
Combining the four contributions proves the claim.
\end{proof}

\begin{lemma}[Odd multiples of $3$: dominant primitive term]
\label{lem:countD2-odd3-bound}
Let $d(n)$ denote the number of positive divisors of $n$. For odd $n$ with $3\mid n$,
\[
|\Dtwo(n)|=\PrimCt{n/3}+O\!\left(n^{2}d(n)\,2^{n/5}\right).
\]
Consequently,
\[
|\Dtwo(n)|=2^{n/3}+O\!\left(d(n)\,2^{n/6}+n^{2}d(n)\,2^{n/5}\right).
\]
\end{lemma}

\begin{proof}
Assume that $n$ is odd and divisible by $3$. In the aggregate count corollary, there is no HT contribution, and the order-$3$ SP boundary case contributes exactly $\PrimCt{n/3}$.

For the generic interval SP contribution, every divisor $m\mid n$ with $m\ge 4$ is odd and therefore satisfies $m\ge 5$, so $n/m\le n/5$. Hence
\[
\frac{\varphi(m)}{2}\,2^{\frac{n}{m}-1}(m-3)\le \frac{m^2}{2}\,2^{n/5-1}\le n^2 2^{n/5}.
\]
There are at most $d(n)$ such divisors. Therefore
\[
\sum_{\substack{m\mid n\\ m\ge 4}}\frac{\varphi(m)}{2}\,2^{\frac{n}{m}-1}(m-3)
=O\!\left(n^{2}d(n)\,2^{n/5}\right).
\]

For the SUB contribution, every admissible divisor $\tau\mid n$ has odd quotient $n/\tau\ge 5$, hence $\tau\le n/5$. Using $\PrimCt{\tau}\le 2^\tau$ again,
\[
\sum_{\substack{\tau\mid n\\ (n/\tau)\ \mathrm{odd}\\ n/\tau\ge 5}}\sum_{d\mid \tau}\mu(d)\,2^{\tau/d}
=\sum_{\substack{\tau\mid n\\ (n/\tau)\ \mathrm{odd}\\ n/\tau\ge 5}}\PrimCt{\tau}
=O\!\left(d(n)\,2^{n/5}\right),
\]
which is absorbed by the previous error term. This proves the first estimate.

For the reformulation, apply the primitive-word formula in the main manuscript with $\ell=n/3$:
\[
\PrimCt{n/3}
=2^{n/3}+\sum_{\substack{e\mid (n/3)\\ e\ge 2}}\mu(e)\,2^{\frac{n}{3e}}
=2^{n/3}+O\!\left(d(n)\,2^{n/6}\right),
\]
since every divisor $e\ge 2$ gives $(n/3)/e\le n/6$. Combining this with the first estimate gives the second formula.

When the odd-prime special case is $p=3$, the generic SP and SUB sums are empty, so the aggregate count corollary gives $|\Dtwo(3)|=\PrimCt{1}=2$, in agreement with the odd-prime specialization. For odd primes $p\ge 5$, this odd-$3$-divisible regime does not apply.
\end{proof}

\begin{remark}[Arithmetic and asymptotic corollaries]
The function $A(n)=\sum_{d\mid n} d\varphi(d)$ is multiplicative by standard divisor-sum arithmetic; this fact is independent of the aggregate count corollary. For a prime power $p^k$,
\[
A(p^k)=1+\sum_{j=1}^k p^j\varphi(p^j)=\frac{p^{2k+1}+1}{p+1},
\qquad
A(n)=\prod_{p^k\parallel n}\frac{p^{2k+1}+1}{p+1}.
\]
Using the convention $|\Dtwo(1)|=0$, write $d(n)$ for the number of positive divisors of $n$. By \Cref{lem:countD2-even-bound,lem:countD2-odd3-bound},
\[
|\Dtwo(n)|=2^{n/2-1}+O\!\left(\One_{3\mid n}\,2^{n/3}+n^{2}d(n)\,2^{n/4}\right)
\qquad (n\ \text{even}),
\]
while for odd $n$ with $3\mid n$,
\[
|\Dtwo(n)|=\PrimCt{n/3}+O\!\left(n^{2}d(n)\,2^{n/5}\right)=2^{n/3}+O\!\left(d(n)\,2^{n/6}+n^{2}d(n)\,2^{n/5}\right),
\]
which are the only asymptotic subsequences needed here. The separate odd-prime exact formula remains isolated in the odd-prime specialization. Since the classical necklace-count formula gives $|\Neck{n}|=2^n/n+O(2^{n/2})$, the class $\Dtwo(n)$ is exponentially sparse among all binary necklaces, with
\[
\frac{|\Dtwo(n)|}{|\Neck{n}|}\sim \frac{n}{2}\,2^{-n/2}\qquad (n\ \text{even}),
\qquad
\frac{|\Dtwo(n)|}{|\Neck{n}|}=O\!\left(n\,2^{-2n/3}\right)\qquad (n\ \text{odd},\ 3\mid n).
\]
\end{remark}

\section{Practical classifier and worked examples}
\label{sec:worked-examples}

\begin{corollary}[Practical distance-$2$ classifier]
\label{cor:classifier}
Let $w\in\B^n$ be a representative of a binary necklace of length $n$ with $\minDist(w)=2$.
Compute $\Ktwo(w)$ by scanning all nonzero shifts and counting mismatches between $w$ and $\rot^k w$, stopping the scan for a given $k$ once more than two mismatches are found.
After the trichotomy theorem in the main manuscript, the theorem-level classifier has the following three terminal classes:
\begin{itemize}[leftmargin=2em]
\item If $n$ is even and $n/2\in\Ktwo(w)$, classify the necklace as HT and set $\tau=\tautwo(w)=\gcdop(n,\Ktwo(w))$;
\item If the HT condition is not satisfied and $|\Ktwo(w)|=2$, write $\Ktwo(w)=\{k,n-k\}$ with $1\le k<n/2$ and classify the necklace as SP together with this $k$;
\item If both preceding conditions fail, classify the necklace as SUB with strict period $\tau=\tautwo(w)=\gcdop(n,\Ktwo(w))$.
\end{itemize}
This is the post-trichotomy case split: it uses the theorem's conclusion that the residual MP class is empty at distance $2$.  A direct implementation for examples runs in $O(n^2)$ time in the worst case and $O(n)$ space; early stopping can reduce the number of comparisons.

The independent finite check in \Cref{sec:smalln} does not use this post-trichotomy classifier to rule out MP.  Instead, it computes the full distance profile, forms $K_{\mathrm{scan}}(w)$, and then tests all four priority branches directly: HT first, SP second, SUB only after checking $K_{\mathrm{scan}}(w)=\tau\Zn\setminus\{0\}$, and MP otherwise.  The comparison with the three-class theorem-level classifier is made only after those four-way labels and counters have been produced.
\end{corollary}

\begin{example}[Worked quotient--background construction]
\label{ex:defect-background-worked}
Take $n=12$ and $\tau=3$, so we choose the minimizing shift $k=\tau=3$ and the quotient order is
\[
M=n/\tau=4.
\]
Choose the primitive quotient--background word
\[
c=(0,0,1)\in\B^3.
\]
Here $c_0=0$.  By the primitive-readout corollary in the main manuscript, part~(b), the $1$-set on the active $\tau$-orbit is a singleton in the normalized representative.
The period subgroup of the quotient--background word $c$ is
\[
P=\{e\in\Zquot{3}:\rot^e c=c\}=\{0\}=3\Zquot{3},
\]
so part~(a) of the primitive-readout corollary gives $\rho=\tau=3$.
The orbit-constant lift is
\[
v=001001001001,
\]
and the normalized representative $w=w(c)$ specified by the quotient--background construction in the main manuscript is obtained by flipping the unique defect position on the active $\tau$-orbit $\{0,3,6,9\}$:
\[
w=101001001001.
\]
Equivalently,
\[
(w_0,w_3,w_6,w_9)=(1,0,0,0),
\]
while the other two $\tau$-orbits carry the background bits $c_1=0$ and $c_2=1$ constantly.
Thus the defect is recorded exactly at index $0$, and the quotient--background construction recovers the minimizer set from the background period subgroup as
\[
\Ktwo(w)=\rho\Zn\setminus\{0\}=3\Zn\setminus\{0\}=\{3,6,9\}.
\]
Since $c$ is primitive and $M=4$ is even, the defect-counting corollary in the main manuscript, part~(b), places the resulting necklace in the refined half-turn slice $\DtwoHT(12;3)$ and shows that this primitive quotient--background word contributes one necklace at that level.
\end{example}

\Cref{tab:d2-worked-types} illustrates the classifier from \Cref{cor:classifier}.  The table records sample representatives, not necessarily the lexicographically least rotations, for the main classifier outputs and the order-$3$ SP boundary case.  The table also displays the length and the interval or quotient-order datum that makes the classifier decision immediate.

\begin{table}[h]
\centering
\small
\begin{tabular}{cccccc}
\hline
Example type & Length $n$ & Representative $w$ & $\Ktwo(w)$ & Interval / quotient datum & Classifier class \\
\hline
HT & $8$ & \texttt{00010011} & $\{4\}$ & half-turn $4=n/2$ & HT \\
SP & $5$ & \texttt{11000} & $\{1,4\}$ & prime interval $w_{1,2}$ & SP \\
order-$3$ SP boundary case & $3$ & \texttt{100} & $\{1,2\}=\Zquot{3}\setminus\{0\}$ & quotient order $M=3$ & SP \\
SUB & $5$ & \texttt{10000} & $\{1,2,3,4\}$ & $\tau=1$, $M=5$ & SUB \\
\hline
\end{tabular}
\caption{Short sample representatives for HT, SP, the order-$3$ SP boundary case, and SUB.  These words are not necessarily the lexicographically least rotations.  The first column records the example type, and the final column records the classifier class.}
\label{tab:d2-worked-types}
\end{table}

A full nontrivial classifier trace is provided by the subgroup-shaped representative
\[
w=101001001001,
\]
from the worked quotient--background construction above, obtained there from the primitive quotient--background word $c=(0,0,1)$ at $(n,\tau,M)=(12,3,4)$. A direct scan of the nonzero shifts gives
\[
d_w(3)=d_w(6)=d_w(9)=2,
\qquad
d_w(k)=8\ \text{for}\ k\in\{1,2,4,5,7,8,10,11\},
\]
so
\[
\Ktwo(w)=\{3,6,9\}=3\Zn\setminus\{0\}.
\]
The practical classifier from \Cref{cor:classifier} now runs in four steps:
\begin{enumerate}[label=(\arabic*),leftmargin=2.5em]
\item the half-turn $6=n/2$ belongs to $\Ktwo(w)$, so the first classifier step already classifies the necklace as HT and rules out SP;
\item because $|\Ktwo(w)|=3>1$, this is not the pure boundary $\{n/2\}$ but a subgroup-shaped HT example;
\item the subgroup divisor is
\[
\tau=\tautwo(w)=\gcdop(12,\Ktwo(w))=3,
\qquad
M=12/\tau=4;
\]
\item since $M$ is even and at least $4$, the primitive-readout and defect-counting corollaries place the necklace in the mixed half-turn slice $\DtwoHT(12;3)$.
\end{enumerate}
Thus this worked example shows an HT classification arising from the subgroup-shaped minimizer set $3\Zn\setminus\{0\}=\{3,6,9\}$ rather than from the pure boundary $\{n/2\}$. By contrast, the prime-cycle representatives in \Cref{tab:d2-worked-types} come from the prime-interval normal form in the main manuscript: \texttt{11000} is $w_{1,2}$ and so lies in SP, while \texttt{10000} is $w_{1,1}$ and has $\Ktwo(w)=\Zquot{5}\setminus\{0\}$, so the quotient-order test gives SUB.

\section{Finite check and algebraic comparisons}
\label{sec:impl}

This section contains the finite check for $2\le n\le 20$, the prescribed-shift comparison with Gabri\'c's word-level count, and, after primitive-orbit division, the aggregate and pure-half-turn necklace-level comparisons using Gabri\'c's notation.

For the finite check, an exhaustive orbit scan enumerates binary words of each length $2\le n\le 20$, retains the lexicographically least rotation from each orbit, and computes the full distance profile over nonzero shifts before assigning any class label.  \Cref{sec:smalln} gives the finite-check documentation described in the supplement contents.

The prescribed-shift and aggregate-comparison subsections give exact comparisons derived from lemmas and count formulas in the main manuscript, using the same descriptive result names as the main text.

\subsection{Finite check for small \texorpdfstring{$n$}{n}}
\label{sec:smalln}

For $2\le n\le 20$, \Cref{tab:smalln-exhaustive-counts} lists the HT, SP, SUB, MP, and total counts obtained from an exhaustive orbit scan.  The rows compare these counts with the class-count theorems, the trichotomy theorem, and the aggregate count corollary.

The scan is deterministic: it enumerates the lexicographically least representative of each binary necklace, computes the full distance profile over nonzero shifts, and then applies the HT/SP/SUB/MP priority order.  It uses the distance formula \texttt{DIST}, the order $0<1$, and no external input data.  The class formulas are used only for comparison, not as inputs to the scan.  The recorded per-necklace fields are listed below and illustrated by the sample records and table.
\begin{enumerate}[label=(\arabic*),leftmargin=2.5em]
\item For each $n\in\{2,\dots,20\}$, enumerate all binary words $u\in\B^n$, form the rotation orbit of $u$, and retain $u$ only when it is the lexicographically least rotation in that orbit. This keeps exactly one representative of each necklace.
\item For each retained representative $w$, compute the full distance profile over nonzero shifts
\[
\bigl(d_w(1),d_w(2),\dots,d_w(n-1)\bigr),
\]
exactly from the rotational-distance definition in the main manuscript; neither class formulas nor early stopping are used in this finite check.
\item Let
\[
\delta_{\mathrm{scan}}(w):=\min_{1\le k<n}d_w(k),
\qquad
K_{\mathrm{scan}}(w):=\{k\in\{1,\dots,n-1\}:d_w(k)=\delta_{\mathrm{scan}}(w)\}.
\]
Keep $w$ in the table exactly when $\delta_{\mathrm{scan}}(w)=2$. For those retained representatives, $K_{\mathrm{scan}}(w)=\Ktwo(w)$ by definition.
\item Assign the retained representative to a class directly from the distance-$2$ class taxonomy in the main manuscript: first HT if $n$ is even and $n/2\in K_{\mathrm{scan}}(w)$; otherwise SP if $K_{\mathrm{scan}}(w)=\{k,n-k\}$ for some $k$; otherwise set $\tau=\gcdop(n,K_{\mathrm{scan}}(w))$ and test whether
\[
K_{\mathrm{scan}}(w)=\tau\Zn\setminus\{0\}.
\]
If this equality holds, record SUB; otherwise record MP.
\item For every retained distance-$2$ representative, record the following per-necklace fields:
\begin{itemize}[leftmargin=2em]
\item profile data: $n$, the full profile, $\delta_{\mathrm{scan}}$, $K_{\mathrm{scan}}(w)=\Ktwo(w)$, the Hamming weight, and the adjacent run count $d_w(1)$;
\item class data: the class label, $\tau=\gcdop(n,K_{\mathrm{scan}}(w))$, the quotient $M=n/\tau$, and the minimizer orders $n/\gcdop(n,k)$;
\item quotient data: the normalized minimizer set $\{k/\tau:k\in K_{\mathrm{scan}}(w)\}$ and its unit-action signature.
\end{itemize}
\item Increment the corresponding HT/SP/SUB/MP counter for that $n$. The table, sample records, and the aggregate output summary are outputs of this scan; a nonzero MP counter would appear before comparison with the main-manuscript formulas.
\end{enumerate}
The pseudocode specifies the implementation details.  The generator uses the standard $1$-based recursion variables; distance computations use zero-based indices modulo $n$.  The procedure \texttt{GENERATE-NECKLACES} is the standard lexicographic necklace recursion and emits the lexicographically least binary representative of each rotation orbit.

All comparisons use the order $0<1$, and \texttt{gcd(n,K)} denotes the greatest common divisor of $n$ and the elements of $K$.  Thus the recursion, \texttt{DIST}, and the HT/SP/SUB/MP priority order determine the computation.  The table is the aggregate output; the sample record and field specification identify the stored per-necklace fields.
\begin{center}
\begin{minipage}{0.98\linewidth}
\small
\begin{verbatim}
procedure GENERATE-NECKLACES(n):
    a[0], a[1], ..., a[n] := 0
    procedure GEN(t,p):
        if t > n:
            if n mod p = 0:
                emit (a[1], a[2], ..., a[n])
        else:
            a[t] := a[t-p]
            GEN(t+1,p)
            for b from a[t-p]+1 to 1:
                a[t] := b
                GEN(t+1,t)
    GEN(1,1)

function DIST(w,k):
    n := length(w)
    return sum_{i=0}^{n-1} indicator(w[i] != w[(i-k) mod n])

function UNIT-SIGNATURE(Q,M):
    U := {a in {1,...,M-1}: gcd(a,M)=1}
    best := none
    for a in U:
        candidate := sorted set {(a*q) mod M: q in Q}
        if best is none or candidate is lexicographically smaller than best:
            best := candidate
    return best

procedure SMALL-DISTANCE-TWO-CHECK:
    for n from 2 to 20:
        count[HT], count[SP], count[SUB], count[MP] := 0,0,0,0
        records := empty list
        for w in GENERATE-NECKLACES(n):
            profile[k] := DIST(w,k) for k=1,...,n-1
            delta := min {profile[k]: k=1,...,n-1}
            if delta != 2: continue
            K := {k in {1,...,n-1}: profile[k] = 2}
            tau := gcd(n,K)
            if n is even and n/2 in K:
                label := HT
            else if there is k in {1,...,n-1} with K = {k,n-k}:
                label := SP
            else if K = {j in {1,...,n-1}: tau divides j}:
                label := SUB
            else:
                label := MP
            M := n/tau
            orders := sorted set {n/gcd(n,k): k in K}
            Q := sorted set {k/tau: k in K}
            signature := UNIT-SIGNATURE(Q,M)
            record (n,w,profile,delta,K,sum_i w[i],profile[1],
                    label,tau,M,orders,Q,signature)
            count[label] := count[label]+1
        output n, count[HT], count[SP], count[SUB], count[MP],
               count[HT]+count[SP]+count[SUB]+count[MP]
\end{verbatim}
\end{minipage}
\end{center}
The closed formulas from the main enumeration and class-count results, together with the post-trichotomy practical classifier in \Cref{cor:classifier}, are not used in the pseudocode; they enter only after the finite check produces the four-way HT/SP/SUB/MP counters and sample records, as theorem-level comparisons.

\paragraph{Finite-check documentation.}
The finite check is specified in this supplementary material by the deterministic scan, pseudocode, count table, sample records, field specification, and aggregate output summary.  No separate finite-check script, retained-record CSV, output transcript, or checksum file is supplied with the supplementary material.  This documentation records the small-length check and is not a proof dependency for the main classification and enumeration theorem.

The sample record and finite-check rows illustrate the per-necklace fields specified in the next paragraph.  The aggregate output summary records the checked range, procedure description, class totals, and cumulative total.  The cumulative total is the number of retained distance-$2$ necklace representatives found by the scan for $2\le n\le 20$.

\paragraph{Per-necklace record fields.}
For each retained distance-$2$ representative, the scan records entries in increasing $n$ and then by the lexicographically least rotation representative emitted by the necklace generator.  The recorded fields are
\begin{center}
\small
\begin{tabular}{l}
\texttt{n, representative, profile, delta, K, weight, d1, label, tau, M,}\\
\texttt{orders, Q, unit\_signature}
\end{tabular}
\end{center}
The list-valued fields \texttt{profile}, \texttt{K}, \texttt{orders}, \texttt{Q}, and \texttt{unit\_signature} are displayed as bracketed integer lists.  The sample record and finite-check rows illustrate representative entries for these fields.

\paragraph{Sample record.}
For the necklace represented by $w=101001001001$ in \Cref{ex:defect-background-worked}, the rotation-invariant fields used in the finite check have $n=12$, with the full distance profile over nonzero shifts summarized by
\[
d_w(3)=d_w(6)=d_w(9)=2,
\qquad
 d_w(k)=8\quad\text{for }k\in\{1,2,4,5,7,8,10,11\}.
\]
Thus $\delta_{\mathrm{scan}}=2$, $K_{\mathrm{scan}}(w)=\{3,6,9\}=\Ktwo(w)$, the Hamming weight is $5$, and $d_w(1)=8$.  The class fields are
\[
\tau=\gcdop(12,\{3,6,9\})=3,
\qquad M=12/3=4,
\qquad \{12/\gcdop(12,k):k\in K_{\mathrm{scan}}(w)\}=\{2,4\},
\]
and the quotient fields are $\{k/\tau:k\in K_{\mathrm{scan}}(w)\}=\{1,2,3\}\subseteq\Zquot{4}$, with unit-action signature $\{1,2,3\}$.

Since the half-turn $6=n/2$ lies in $K_{\mathrm{scan}}(w)$, the class label recorded by the finite-check procedure is HT.

The finite-check table below lists one lexicographically least representative for each classifier case or boundary type.  The minimizer column $K_{\mathrm{scan}}(w)$ records the nonzero shifts achieving the minimum distance, and the class fields $\tau$, $M$, and the priority rule determine the recorded label.  The full distance profile $(d_w(1),\dots,d_w(n-1))$ for these representatives is the raw scan data summarized in \Cref{tab:d2-worked-types} and the worked record above.  All rows come from the deterministic scan described above.
\begin{center}
\scriptsize
\begin{tabular}{llclccc p{3.4cm}}
\hline
Check type & $n$ & Representative & $K_{\mathrm{scan}}(w)$ & $\tau$ & $M$ & Label & Priority rule \\
\hline
pure HT & $8$ & \texttt{00010011} & $\{4\}$ & $4$ & $2$ & HT & $4=n/2\in K$ \\
mixed HT & $12$ & \texttt{001001001101} & $\{3,6,9\}$ & $3$ & $4$ & HT & $6=n/2\in K$ \\
generic SP & $5$ & \texttt{00011} & $\{1,4\}$ & $1$ & $5$ & SP & HT condition not satisfied; $K=\{1,5-1\}$ \\
order-$3$ SP boundary case & $3$ & \texttt{001} & $\{1,2\}=\Zquot{3}\setminus\{0\}$ & $1$ & $3$ & SP & HT condition not satisfied; $|K|=2$ has priority over SUB \\
SUB & $5$ & \texttt{00001} & $\{1,2,3,4\}=\Zquot{5}\setminus\{0\}$ & $1$ & $5$ & SUB & HT and SP conditions not satisfied; subgroup-shaped condition applies \\
\hline
\end{tabular}
\end{center}

The aggregate output below summarizes the deterministic scan.  The worked record and finite-check rows show how the recorded fields determine class assignments before any closed formula is consulted.  The profile, minimizer set, class label, quotient data, and signature shown here are invariant under rotation.

\subsubsection*{Finite-check output summary}

The following printed summary gives aggregate output from the deterministic scan described in this section.  It records the checked range, software environment, procedure description, class totals, and cumulative number of retained necklace representatives.  Class labels are computed from the full distance profile over nonzero shifts for each retained necklace representative, without using the closed formulas from the main manuscript.
\begin{center}
\begin{minipage}{0.98\linewidth}
\scriptsize
\begin{verbatim}
Checked range: 2 <= n <= 20
Software environment used for printed summary: Python 3.13.5 (CPython)
Procedure: deterministic scan described in this supplementary material; no closed formulas used
Record fields: n,representative,profile,delta,K,weight,d1,label,tau,M,orders,Q,unit_signature
n    HT   SP  SUB   MP total
 2    1    0    0    0    1
 3    0    2    0    0    2
 4    2    1    0    0    3
 5    0    4    2    0    6
 6    4    5    0    0    9
 7    0   12    2    0   14
 8    8   12    0    0   20
 9    0   24    2    0   26
10   16   22    2    0   40
11    0   40    2    0   42
12   32   40    0    0   72
13    0   60    2    0   62
14   64   57    2    0  123
15    0   94    8    0  102
16  128   80    0    0  208
17    0  112    2    0  114
18  256  147    2    0  405
19    0  144    2    0  146
20  512  144   12    0  668
cumulative 1023 1000 40 0 2063
\end{verbatim}
\end{minipage}
\end{center}

\begin{table}[h]
\centering
\small
\begin{tabular}{rrrrrr}
\hline
$n$ & HT & SP & SUB & MP & total \\
\hline
 2 & 1 & 0 & 0 & 0 & 1 \\
 3 & 0 & 2 & 0 & 0 & 2 \\
 4 & 2 & 1 & 0 & 0 & 3 \\
 5 & 0 & 4 & 2 & 0 & 6 \\
 6 & 4 & 5 & 0 & 0 & 9 \\
 7 & 0 & 12 & 2 & 0 & 14 \\
 8 & 8 & 12 & 0 & 0 & 20 \\
 9 & 0 & 24 & 2 & 0 & 26 \\
 10 & 16 & 22 & 2 & 0 & 40 \\
 11 & 0 & 40 & 2 & 0 & 42 \\
 12 & 32 & 40 & 0 & 0 & 72 \\
 13 & 0 & 60 & 2 & 0 & 62 \\
 14 & 64 & 57 & 2 & 0 & 123 \\
 15 & 0 & 94 & 8 & 0 & 102 \\
 16 & 128 & 80 & 0 & 0 & 208 \\
 17 & 0 & 112 & 2 & 0 & 114 \\
 18 & 256 & 147 & 2 & 0 & 405 \\
 19 & 0 & 144 & 2 & 0 & 146 \\
 20 & 512 & 144 & 12 & 0 & 668 \\
\hline
\end{tabular}
\caption{Finite-check counts for $2\le n\le 20$.  The check uses the full distance profile over nonzero shifts before assigning any class label.}
\label{tab:smalln-exhaustive-counts}
\end{table}

Rows in the comparison table use the tuple
$(\mathrm{HT},\mathrm{SP},\mathrm{SUB},\mathrm{MP},\mathrm{total})$.  The finite-check tuple is the output of the finite scan; the formula tuple is computed from the class-count theorems, the trichotomy theorem for $\mathrm{MP}=0$, and the aggregate count corollary.  Every displayed row agrees.
\begin{center}
\scriptsize
\begin{tabular}{rccc}
\hline
$n$ & finite-check tuple & formula tuple & agreement \\
\hline
 2 & $(1,0,0,0,1)$ & $(1,0,0,0,1)$ & agrees \\
 3 & $(0,2,0,0,2)$ & $(0,2,0,0,2)$ & agrees \\
 4 & $(2,1,0,0,3)$ & $(2,1,0,0,3)$ & agrees \\
 5 & $(0,4,2,0,6)$ & $(0,4,2,0,6)$ & agrees \\
 6 & $(4,5,0,0,9)$ & $(4,5,0,0,9)$ & agrees \\
 7 & $(0,12,2,0,14)$ & $(0,12,2,0,14)$ & agrees \\
 8 & $(8,12,0,0,20)$ & $(8,12,0,0,20)$ & agrees \\
 9 & $(0,24,2,0,26)$ & $(0,24,2,0,26)$ & agrees \\
10 & $(16,22,2,0,40)$ & $(16,22,2,0,40)$ & agrees \\
11 & $(0,40,2,0,42)$ & $(0,40,2,0,42)$ & agrees \\
12 & $(32,40,0,0,72)$ & $(32,40,0,0,72)$ & agrees \\
13 & $(0,60,2,0,62)$ & $(0,60,2,0,62)$ & agrees \\
14 & $(64,57,2,0,123)$ & $(64,57,2,0,123)$ & agrees \\
15 & $(0,94,8,0,102)$ & $(0,94,8,0,102)$ & agrees \\
16 & $(128,80,0,0,208)$ & $(128,80,0,0,208)$ & agrees \\
17 & $(0,112,2,0,114)$ & $(0,112,2,0,114)$ & agrees \\
18 & $(256,147,2,0,405)$ & $(256,147,2,0,405)$ & agrees \\
19 & $(0,144,2,0,146)$ & $(0,144,2,0,146)$ & agrees \\
20 & $(512,144,12,0,668)$ & $(512,144,12,0,668)$ & agrees \\
\hline
\end{tabular}
\end{center}

Across this checked range, the cumulative totals are HT $=1{,}023$, SP $=1{,}000$, SUB $=40$, MP $=0$, and total $=2{,}063$, again consistent with the aggregate count corollary and the trichotomy theorem.

\subsection{Prescribed-shift counts}
\label{sec:fixedshift}

Fix a prescribed nonzero shift $i\in\{1,\dots,n-1\}$. The main manuscript mentions prescribed-shift counts only as comparison material. Here we derive the corresponding prescribed-shift necklace count in the notation of the main manuscript and compare it with Gabri\'c's binary word-level formula \cite[Lem.~7]{Gabric2022}. This comparison stays at the prescribed-shift level and does not recover the full minimizing-shift set $\Ktwo(w)$.

Let
\[
H(n,i):=\{u\in\{0,1\}^n:d_u(i)=2\},
\qquad
h(n,i):=|H(n,i)|.
\]
The set $H(n,i)$ is the prescribed-shift binary word slice counted by Gabri\'c \cite[Lem.~7]{Gabric2022}, and $h(n,i)$ is its cardinality.  Here $d_u(i)$ is the same rotational Hamming distance used throughout the main manuscript: $d_u(i)=\Ham(u,\rot^i u)$.  Let $N(n,i)$ denote the corresponding prescribed-shift necklace count, and write $g:=\gcdop(n,i)$ and $m:=n/g$.  Writing
\[
H(n,i)\subseteq H(n):=\{u\in\{0,1\}^n:d_u(k)=2\text{ for some }k\in\{1,\dots,n-1\}\},
\]
the prescribed shift $i$ itself witnesses the condition, so $H(n,i)\subseteq H(n)$.

The derivation follows directly from the local geometry in the main manuscript.
The shift-cycle decomposition, active-orbit lemma, and block lemma in the main manuscript give the needed equivalence.
A word $u$ lies in $H(n,i)$ exactly when one of the $g$ $i$-orbits of $\Zn$ has a proper cyclic interval $I\subseteq\Zquot{m}$ as its $1$-set and the other $g-1$ orbits are constant.
Conversely, any such choice gives $d_u(i)=2$.
The active orbit contributes exactly two boundary points, and each constant orbit contributes zero.
The choices are independent: $g$ choices for the active orbit, $m(m-1)$ choices for the interval $I$ (initial point and length $b\in\{1,\dots,m-1\}$), and $2^{g-1}$ choices for the constant orbit values.
Therefore
\[
h(n,i)=g\cdot m(m-1)\cdot 2^{g-1}=2^{g-1}n(m-1).
\]
This is exactly the binary prescribed-shift specialization of \cite[Lem.~7]{Gabric2022}, stated in the same notation and used here only for comparison.
The derivation uses only the displayed lemmas cited above.
For the quotient step, we record the same prescribed-shift geometry as a local primitivity check.

\begin{lemma}[Prescribed-shift distance $2$ implies primitivity]
\label{lem:fixedshift-d2-primitive}
Let $n\ge2$, let $1\le i<n$, and let $u\in\B^n$.
If $d_u(i)=2$, then $u$ has trivial rotation stabilizer.
Consequently, every word in $H(n,i)$, and every word in $H(n)$, has a rotation orbit of size $n$.
\end{lemma}

\begin{proof}
Write $g:=\gcdop(n,i)$ and $m:=n/g$.
By the active-orbit and block lemmas in the main manuscript, the $i$-orbits contain a unique active orbit $\mathcal O$, whose $1$-set is a proper cyclic interval $I\subseteq\Zquot{m}$, while all other $i$-orbits are constant.
Suppose $\rot^s u=u$.
Translation by $s$ sends $i$-orbits to $i$-orbits and preserves their internal distance contributions, so it must send the unique active orbit to itself.
Thus $s\in\langle i\rangle$, say $s\equiv ji\pmod n$.
If $s\ne0$, then $j\not\equiv0\pmod m$, and the restriction of $u$ to $\mathcal O$ would force $I+j=I$ in $\Zquot{m}$.
But the interval-shift criterion in the main manuscript gives $|I\triangle(I+j)|>0$ for every nonzero $j\in\Zquot{m}$ and every proper cyclic interval $I$, a contradiction.
Hence $s=0$ in $\Zn$, so the stabilizer is trivial and the rotation orbit has size $n$.
The assertion for $H(n)$ follows by applying the first part to any nonzero witness shift.
\end{proof}

To pass from this word-level slice to necklace language, we use the same two ingredients as in the aggregate and odd-prime comparisons: primitivity and rotation invariance.  Every word in the prescribed-shift slice $H(n,i)$ has a distance-$2$ witness and is therefore primitive by \Cref{lem:fixedshift-d2-primitive}.  Hence each rotation orbit in the slice has size $n$.

The predicate $d_w(i)=2$ is rotation-invariant: for every $j\in\Zn$,
\[
d_{\rot^j w}(i)=\Ham(\rot^j w,\rot^{i+j}w)=\Ham(w,\rot^i w)=d_w(i).
\]
Thus $H(n,i)$ is a union of primitive rotation orbits.  Each such orbit has exactly $n$ words by \Cref{lem:fixedshift-d2-primitive}, so
\[
N(n,i)=|H(n,i)/\langle\rot\rangle|=\frac{h(n,i)}{n},
\]
and taking the lexicographically least word in each orbit would instead recover the matching Lyndon representatives.

Substituting the displayed local word count yields
\begin{equation}
\label{eq:gabric-fixedshift}
N(n,i)=2^{g-1}(m-1).
\end{equation}
Thus \eqref{eq:gabric-fixedshift} is the prescribed-shift necklace count obtained by dividing by the primitive orbit size from \Cref{lem:fixedshift-d2-primitive}.

\subsection{Aggregate and pure half-turn comparison with Gabri\'c}
\label{sec:gabric-crosswalk}

The prescribed-shift slice in \Cref{sec:fixedshift} provides one comparison.  Gabri\'c's aggregate word count \cite[Thm.~17]{Gabric2022} corresponds, after binary specialization and primitive-orbit division, to the present total necklace count.  Gabri\'c's Lyndon-word quotient \cite[Thm.~19]{Gabric2022} gives the same aggregate comparison at the Lyndon-word level.  We make these correspondences explicit as follows.

Let $h_2(n)$ denote Gabri\'c's aggregate count $h(n)$ over the binary alphabet, and set
\[
H_2(n):=\{u\in\B^n:d_u(k)=2\text{ for some }1\le k<n\}.
\]
Then
\[
H_2(n)=\{u\in\B^n:\minDist(u)=2\}.
\]
The inclusion from right to left is immediate from the definition of $\minDist$.  Conversely, if $d_u(k)=2$ for some nonzero $k$, then \Cref{lem:fixedshift-d2-primitive} shows that $u$ is primitive.  The positive-minimum primitivity lemma in the main manuscript gives $\minDist(u)>0$, and the even-distance lemma there forces $\minDist(u)=2$.

The set $H_2(n)$ is rotation-invariant.  Every orbit in it has size $n$ by \Cref{lem:fixedshift-d2-primitive}, so
\begin{equation}
\label{eq:gabric-aggregate-crosswalk}
\frac{h_2(n)}{n}=|H_2(n)/\langle\rot\rangle|=|\Dtwo(n)|.
\end{equation}
The formula-level comparison also reorganizes into the three classwise summands of the aggregate count corollary.
\paragraph{External comparison inputs.}
The rest of this subsection uses only binary specializations of Lemma~16 and Theorems~17--19 of Gabri\'c~\cite[Lem.~16, Thms.~17--19]{Gabric2022}.  These external results are used only for comparison; none is used in the main proof.

Let $p_2(r)$ denote the number of nonprimitive binary words of length $r$, so that $p_2(r)=2^r-\PrimCt{r}$ in the present notation.  For a summand in Theorem~17, let $h''_2(n,i)$ denote Gabri\'c's irredundant word-level summand over the binary alphabet, and put $g_i:=\gcdop(n,i)$.

The irredundant index convention is as follows.  Set $m:=n/g_i$ and $\tau:=n/m$.  Each $m$-block is represented by an index $i=\tau a$ with $a\in(\ZZ/m\ZZ)^\times$ and $1\le a\le m/2$; this choice selects one representative from each antipodal unit pair.

After the primitive-orbit division by $n$ used in \eqref{eq:gabric-aggregate-crosswalk}, the imported Lemma~16/Theorem~17 summand is
\begin{equation}
\label{eq:gabric-hpp-binary}
\frac{h''_2(n,i)}{n}=
\begin{cases}
\dfrac12\left(2^i\left(\dfrac ni-1\right)-2p_2(i)\right)
=\PrimCt{i}+2^{i-1}\left(\dfrac ni-3\right),& i\mid n,\\[0.7em]
2^{g_i-1}\left(\dfrac n{g_i}-3\right),& i\nmid n
\end{cases}.
\end{equation}
The imported Theorem~18 specialization for words with exactly one distance-$2$ conjugate is
\begin{equation}
\label{eq:gabric-htriple-binary}
h^{\prime\prime\prime}_2(2r)=r\bigl(2^r-2p_2(r)\bigr),
\qquad h^{\prime\prime\prime}_2(2r+1)=0,
\end{equation}
where $h^{\prime\prime\prime}_2(N)$ denotes Gabri\'c's word-level count of binary words of length $N$ with exactly one distance-$2$ conjugate.  Theorem~19 supplies the corresponding Lyndon-word quotient.  In the present distance-$2$ setting, \Cref{lem:fixedshift-d2-primitive} makes the counted words primitive, so the quotient is the same primitive-orbit division by the word length used in \eqref{eq:gabric-aggregate-crosswalk}.

Here the equality in the divisor case uses $p_2(i)=2^i-\PrimCt{i}$.  The comparison in \eqref{eq:gabric-hpp-binary} checks the HT/SP/SUB enumeration formulas listed in the introduction.  In the $m$-block, $a=1$ is the divisor case $i=\tau$.  The restriction $a\le m/2$ selects one element from each antipodal unit pair.  Therefore, \eqref{eq:gabric-hpp-binary} gives
\begin{equation}
\label{eq:gabric-block-reorg}
\sum_{\substack{1\le a\le m/2\\ \gcdop(a,m)=1}}
\frac{h''_2(n,\tau a)}{n}
=
\begin{cases}
\PrimCt{\tau}-2^{\tau-1},&m=2,\\[0.3em]
\PrimCt{\tau},&m=3,\\[0.3em]
\PrimCt{\tau}+\dfrac{\varphi(m)}2\,2^{\tau-1}(m-3),&m\ge4
\end{cases}.
\end{equation}
Summing \eqref{eq:gabric-block-reorg} over divisors $m\mid n$ with $m\ge2$ yields the manuscript's aggregate formula.  The HT contribution is
\[
\One_{2\mid n}\left(\PrimCt{n/2}-2^{n/2-1}\right)
+
\sum_{\substack{m\mid n\\ m\ge4\,\mathrm{even}}}
\PrimCt{n/m}
=
\One_{2\mid n}\left(\sum_{\tau\mid n/2}\PrimCt{\tau}-2^{n/2-1}\right)
=\One_{2\mid n}2^{n/2-1}.
\]
The $m=3$ term gives the contribution from the order-$3$ SP boundary case, $\One_{3\mid n}\PrimCt{n/3}$, and the remaining SP contribution is
\[
\sum_{\substack{m\mid n\\m\ge4}}
\frac{\varphi(m)}2\,2^{n/m-1}(m-3).
\]
Finally, the SUB contribution is
\[
\sum_{\substack{m\mid n\\m\ge5\,\mathrm{odd}}}\PrimCt{n/m}
=
\sum_{\substack{\tau\mid n\\ (n/\tau)\,\mathrm{odd}\\ n/\tau\ge5}}
\PrimCt{\tau}.
\]
Thus Gabri\'c's Theorems~17 and~19 give the aggregate word-level and Lyndon-level comparison.  In the present notation, this comparison corresponds to the aggregate count corollary.  The present paper refines it by recovering $\Ktwo(w)$ and splitting the resulting necklaces into HT, SP, and SUB classes.

Gabri\'c's exactly-one-conjugate count gives the pure half-turn boundary.  The only external input used here is the binary Theorem~18 specialization recorded in \eqref{eq:gabric-htriple-binary}; it is used only for external comparison.

Any word counted by $h^{\prime\prime\prime}_2(n)$ has a distance-$2$ conjugate.  By \Cref{lem:fixedshift-d2-primitive}, such a word is primitive, and its nontrivial shifts give distinct conjugates.  Since $d_u(k)=d_u(n-k)$ for all $k$, exactly one such conjugate can occur only when $n=2m$ and the unique active shift is the half-turn $m$.  Conversely, a word representative of a pure half-turn necklace has $\Ktwo(u)=\{m\}$ and therefore exactly one distance-$2$ conjugate.  Hence
\begin{equation}
\label{eq:gabric-pureHT-crosswalk}
\frac{h^{\prime\prime\prime}_2(2m)}{2m}=|\DtwoHT(2m;m)|.
\end{equation}
The algebra also matches the pure half-turn corollary.  Gabri\'c's notation $p_2(m)$ counts nonprimitive binary words of length $m$ (proper powers). Thus, in the present notation,
\[
p_2(m)=2^m-\PrimCt{m}.
\]
Dividing the displayed Theorem~18 specialization by the primitive orbit size $2m$ and substituting $p_2(m)=2^m-\PrimCt{m}$ yields
\[
\frac{h^{\prime\prime\prime}_2(2m)}{2m}
=\frac{2^m-2(2^m-\PrimCt{m})}{2}
=\PrimCt{m}-2^{m-1}
=|\DtwoHT(2m;m)|,
\]
which is exactly the corollary's pure half-turn formula.  This comparison remains at the aggregate and pure-boundary levels; it does not provide the HT/SP/SUB minimizer-set classifier supplied by the trichotomy theorem.